\documentclass{amsart}
\usepackage[a4paper,outer=3.5cm,inner=3.5cm,total={14cm,21.4cm}]{geometry} 
\usepackage[english]{babel}
\usepackage[utf8]{inputenc}
\usepackage{amsmath}
\usepackage{amssymb}
\usepackage{amsthm}
\usepackage{enumerate}
\usepackage[shortlabels]{enumitem}
\usepackage{tikz-cd}
\usepackage{stmaryrd}
\usepackage{framed}

\usepackage[colorlinks=true]{hyperref}
\hypersetup{colorlinks, citecolor=blue, filecolor=black, linkcolor=black, urlcolor=black}

\usepackage[capitalize]{cleveref}
\crefformat{equation}{\normalfont(#2#1#3)}

\crefformat{section}{\normalfont \S#1}

\newtheorem{thm}{Theorem}[section]
\newtheorem{Theorem}[thm]{Theorem}
\newtheorem{Lemma}[thm]{Lemma}
\newtheorem{Proposition}[thm]{Proposition}
\newtheorem{Corollary}[thm]{Corollary}

\theoremstyle{definition}
\newtheorem{Definition}[thm]{Definition}
\newtheorem{Question}[thm]{Question}
\newtheorem{Example}[thm]{Example}
\newtheorem{Remark}[thm]{Remark}

\newlist{propenum}{enumerate}{1}
\setlist[propenum]{label=(\roman*), ref=\theProposition.(\roman*)}
\crefalias{propenumi}{Proposition}

\newlist{defenum}{enumerate}{1}
\setlist[defenum]{label=(\alph*), ref=\theDefinition.(\alph*)}
\crefalias{defenumi}{Definition}

\newcommand{\B}{\mathbb{B}}
\newcommand{\C}{\mathbb{C}}
\newcommand{\F}{\mathbb{F}}
\newcommand{\G}{\mathbb{G}}
\newcommand{\N}{\mathbb{N}}
\renewcommand{\O}{\mathcal{O}}
\renewcommand{\P}{\mathbb{P}}
\newcommand{\Q}{\mathbb{Q}}
\newcommand{\R}{\mathbb{R}}
\newcommand{\Z}{\mathbb{Z}}
\newcommand{\m}{\mathfrak{m}}

\newcommand{\uZp}{\underline{\Z}_p}

\newcommand{\whZ}{{\wh\Z}}
\newcommand{\uwhZ}{\underline{\wh\Z}}

\newcommand{\im}{\mathrm{im}}

\newcommand{\Hom}{\operatorname{Hom}}
\newcommand{\Map}{\operatorname{Map}}
\newcommand{\Mor}{\operatorname{Mor}}
\newcommand{\Mapc}{\operatorname{Map}_{\cts}}

\newcommand{\HOM}{\underline{\mathrm{Hom}}}
\newcommand{\EXT}{\underline{\mathrm{Ext}}}

\newcommand{\Perf}{\operatorname{Perf}}

\newcommand{\Spa}{\operatorname{Spa}}
\newcommand{\Spf}{\operatorname{Spf}}

\newcommand{\id}{{\operatorname{id}}}
\newcommand{\cts}{{\operatorname{cts}}}
\newcommand{\lc}{{\operatorname{lc}}}
\newcommand{\an}{\mathrm{an}}
\newcommand{\dR}{\mathrm{dR}}
\newcommand{\hotimes}{\hat{\otimes}}

\newcommand{\et}{{\mathrm{\acute{e}t}}}

\newcommand{\fet}{{\mathrm{f\acute{e}t}}}

\newcommand{\HT}{\mathrm{HT}}
\newcommand{\HTlog}{\mathrm{HTlog}}

\newcommand{\aeq}{\stackrel{a}{=}}
\newcommand{\ad}{\mathrm{ad}}
\newcommand{\wh}{\widehat}
\newcommand{\bOx}{\overline{\O}^{\times}}
\newcommand{\Ottt}{\G_m^{\tt}}
\newcommand{\bOttt}{\G_m/\G_m^{\tt}}
\newcommand{\Pic}{\operatorname{Pic}}
\newcommand{\uP}{\mathbf{Pic}}

\newcommand{\LSD}{\mathrm{LSD}}
\newcommand{\Spd}{\mathrm{Spd}}
\newcommand{\wt}{\widetilde}
\renewcommand{\diamond}{\diamondsuit}
\newcommand{\eq}{\mathrm{eq}}

\newcommand{\rk}{\mathrm{rk}}

\renewcommand{\lim}{\varprojlim}

\newcommand{\cH}{{\ifmmode \check{H}\else{\v{C}ech}\fi}}
\newcommand{\tf}{[\tfrac{1}{p}]}
\renewcommand{\tt}{\mathrm{tt}}
\newcommand{\tpt}{\langle p^\infty\rangle}
\newcommand{\uHiggs}{\mathbf{Higgs}}
\newcommand{\Bun}{\mathbf{Bun}}
\newcommand{\Higgs}{\mathbf{Higgs}}
\newcommand{\MB}{\mathbf M_{\mathrm{B}}}
\newcommand{\MD}{\mathbf M_{\mathrm{Dol}}}

\newcommand*\isomarrow{%
	\xrightarrow{\raisebox{-0.35em}{\smash{\ensuremath{\sim}}}}
}

\begin{document}
\title{A geometric $p$-adic Simpson correspondence in rank one}
\author{Ben Heuer}
\date{}
\maketitle
\begin{abstract}
	For any smooth proper rigid space $X$ over a complete algebraically closed extension $K$ of $\Q_p$ we give a geometrisation of the $p$-adic Simpson correspondence of rank one in terms of analytic moduli spaces: The $p$-adic character variety is canonically an \'etale twist of the moduli space of topological torsion Higgs line bundles over the Hitchin base. This also eliminates the choice of an exponential. The key idea is to relate both sides to moduli spaces of $v$-line bundles: We develop a theory of topological torsion subsheaves of $v$-sheaves and apply this to the diamantine $v$-Picard functor of \cite{heuer-diamantine-Picard}.
	
	\medskip
	
	As an application of this geometric correspondence, we study a major open question in $p$-adic non-abelian Hodge theory raised by Faltings, namely which Higgs bundles will correspond to continuous representations under the $p$-adic Simpson correspondence.  We answer this question in rank one by describing the essential image of the continuous characters $\pi^{\et}_1(X)\to K^\times$ in terms of moduli spaces: For projective $X$ over $K=\C_p$, it is given by Higgs line bundles with vanishing Chern classes like in complex geometry, but in general we show that the correct condition is the strictly stronger assumption that the underlying line bundle is a topological torsion element in the topological group $\Pic(X)$.
	
\end{abstract}

\setcounter{tocdepth}{2}
\section{Introduction}

The Corlette--Simpson correspondence for a smooth projective variety $X$ over $\C$ is an equivalence of categories between finite dimensional $\C$-linear representations of the topological fundamental group $\pi_1(X)$ of $X(\C)$ on the one hand, and semi-stable Higgs bundles on $X$  with vanishing (rational) Chern classes on the other hand \cite[\S1]{SimpsonCorrespondence}. For any $n\in \N$, there are natural complex analytic moduli spaces of the rank $n$ objects on either side, and the correspondence induces a homeomorphism between them, see  \cite[Proposition~1.5]{SimpsonCorrespondence}\cite[Theorem 7.18]{SimpsonModuliII}. However, this map does not respect the complex analytic structures.

Our first result is a very close $p$-adic analogue of this equivalence in rank one:

\begin{Theorem}\label{t:intro-Corollary-Cp}
	Let $X$ be a connected smooth projective variety over $\C_p$ and fix $x\in X(\C_p)$. Then any choice of an exponential on $\C_p$ and of a $B_{\mathrm{dR}}^+/\xi^2$-lift of $X$ induce an equivalence
	\[\Big\{\begin{array}{@{}c@{}l}\text{1-dimensional continuous}\\\text{$\C_p$-representations of $\pi_1^{\et}(X,x)$} \end{array}\Big\} \isomarrow \Big\{\begin{array}{@{}c@{}l}\text{Higgs bundles on $X$ of rank $1$}\\\text{with vanishing Chern classes}\end{array}\Big\}.\]
	Upon passing to isomorphism classes, this induces an isomorphism of topological groups 
	\[ \Hom_{\cts}(\pi_1^{\et}(X,x),\C_p^\times)\cong \Pic^{\tau}(X)\times H^0(X,\Omega^1(-1)).\]
	This does not come from a rigid analytic map for the natural rigid space structures, see \S1.1.
	
	\medskip
	
	More generally, let $X$ be a connected smooth \textnormal{proper} rigid space over \textnormal{any} complete algebraically closed field $K|\C_p$ and assume that the rigid analytic Picard functor of $X$ is representable by a rigid space $\uP_X$. Then  the analogous statements hold when we replace the condition of ``vanishing Chern classes'' by the strictly stronger condition that the underlying line bundle $L$ is ``topologically torsion'' in  $\uP_X(K)$, meaning that $L^{n!}\to 1$ for $n\to \infty$.
\end{Theorem}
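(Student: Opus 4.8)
The plan is to relate both sides of the asserted equivalence to the $v$-Picard group $\Pic_v(X)=H^1_v(X,\O_X^\times)$, endowed with the topological group structure coming from the diamantine $v$-Picard functor of \cite{heuer-diamantine-Picard}. On the side of representations, a continuous character $\chi\colon\pi_1^{\et}(X,x)\to K^\times$ is the same datum as a continuous character of the pro-finite-\'etale fundamental group, and pushing it forward along $\underline{K}^\times\hookrightarrow\O_X^\times$ produces a $v$-line bundle $L_\chi$ on $X$, namely the twist by $\chi$ of the trivial bundle descended along the universal pro-finite-\'etale cover $\widetilde X\to X$. Since $X$ is connected smooth proper over the algebraically closed field $K$, the cover $\widetilde X$ is connected with $H^0(\widetilde X,\O)=K$ and its deck group acts trivially on constants, so the Cartan--Leray spectral sequence for $\widetilde X\to X$ in the $v$-topology degenerates in low degrees to an injection
\[\Hom_{\cts}(\pi_1^{\et}(X,x),K^\times)\hookrightarrow\Pic_v(X)\]
whose image is the subgroup $\Pic_v^{\profet}(X)$ of $v$-line bundles that become trivial on $\widetilde X$; a comparison of topologies should upgrade this to an isomorphism of topological groups.

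On the side of Higgs bundles I would invoke the $v$-descent description of line bundles from Heuer's earlier work: using the assumed representability to write the rigid analytic Picard group as $\uP_X(K)$, the choice of an exponential for $K$ together with the choice of a $B_{\mathrm{dR}}^+/\xi^2$-lift of $X$ yields a functorial short exact sequence of topological groups
\[0\longrightarrow\uP_X(K)\longrightarrow\Pic_v(X)\xrightarrow{\ \HTlog\ }H^0(X,\Omega^1(-1))\longrightarrow 0\]
in which the exponential provides a splitting, and under the rank-one $p$-adic Simpson correspondence a $v$-line bundle $M$ goes to the Higgs line bundle whose underlying line bundle is the image of $M$ in $\uP_X(K)$ and whose Higgs field is $\HTlog(M)\in H^0(X,\Omega^1(-1))$. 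Tensoring $v$-line bundles up to $v$-vector bundles, and using that in rank one every Higgs bundle is of this shape, this recovers the equivalence of categories in the statement, so it remains to match the two subobjects $\Pic_v^{\profet}(X)$ and the set of Higgs line bundles whose underlying line bundle $L$ satisfies $L^{n!}\to1$ in $\uP_X(K)$.

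This matching is the heart of the argument and, I expect, its main obstacle: one must prove that a $v$-line bundle is pro-finite-\'etale trivialisable precisely when it is a topological torsion point of the diamantine $v$-Picard functor, i.e.\ $M^{n!}\to1$. This is exactly what a general theory of topological torsion subsheaves of $v$-sheaves is meant to supply; the two inclusions should come, on one hand, from the fact that $n!$ annihilates arbitrarily large finite quotients of the profinite group $\pi_1^{\et}(X,x)$, so that $L_\chi^{n!}\to1$ automatically, and on the other hand from approximating topological torsion $v$-line bundles by honestly torsion ones, which are pro-finite-\'etale by Kummer theory. Granting this, and using that the finite-dimensional $K$-vector space $H^0(X,\Omega^1(-1))$ is itself entirely topological torsion because $n!\to0$ in $K$, the split sequence above restricts to
\[\Pic_v^{\profet}(X)\;\cong\;\{L\in\uP_X(K):L^{n!}\to1\}\times H^0(X,\Omega^1(-1)),\]
which combined with the first paragraph yields the isomorphism of topological groups, while chasing the identifications through the Simpson construction yields the equivalence of categories. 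The \'etale twist mentioned in the abstract is what survives when one does not fix an exponential and a lift, and the present choices trivialise it.

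Finally, for $X$ projective over $K=\C_p$ one must identify $\{L\in\uP_X(\C_p):L^{n!}\to1\}$ with $\Pic^\tau(X)$. For $\supseteq$ one uses that $\Pic^\tau(X)=\Pic^0_X(\C_p)\cdot\Pic(X)_{\mathrm{tors}}$ and that every $\C_p$-point of the abelian variety $\Pic^0_X$ is topological torsion, since up to a torsion multiple it lies in the topologically $p$-nilpotent formal group, whence $L^{n!}\to1$ for $n$ large. For $\subseteq$ one notes that $\uP_X(\C_p)/\Pic^\tau(X)$ embeds into the finitely generated, hence discrete, group $\NS(X)$, in which $0$ is the only topological torsion element. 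The same discussion explains why the weaker hypothesis cannot suffice in general: as soon as $\uP_X^0$ contains a copy of $\G_m$ (for instance for a non-algebraic surface such as a $p$-adic Hopf surface), the condition $L^{n!}\to1$ restricts $L$ to the proper subgroup $\Ottt(\C_p)$ of topological torsion units, whereas "vanishing Chern classes" imposes nothing on $\uP_X^0$ at all.
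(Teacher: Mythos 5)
Your architecture is the same as the paper's: identify continuous characters with the pro-finite-\'etale (equivalently, topological torsion) part of $\Pic_v(X)$ via Cartan--Leray for $\wt X\to X$, split off the Hitchin-base factor using the $\HTlog$ sequence and the chosen exponential and lift, and over $\C_p$ identify the topological torsion subgroup of $\uP_X(\C_p)$ with $\Pic^\tau(X)$ using that $\overline{\F}_p^\times$ and $\overline A(\overline{\F}_p)$ are torsion. The paper merely packages this geometrically (as the $K$-points of the comparison $\MB\times_{\mathcal A}\mathbb L_{\mathbb X}\cong\MD\times_{\mathcal A}\mathbb L_{\mathbb X}$), and handles the bad-reduction case of $\uP^0_X$ by Raynaud uniformisation, which your one-line reduction to the formal group quietly assumes away but which is a routine supplement.

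The genuine gap is in the step you yourself flag as the heart of the matter, namely that a line bundle is trivialised by $\wt X$ if and only if it is topological torsion, and the specific mechanism you propose for the harder direction would fail. ``Approximating topological torsion $v$-line bundles by honestly torsion ones'' is not available: torsion points are not dense in the topological torsion subgroup. Already in $\G_m^\tt(\C_p)=\mu\cdot\wh\G_m(\C_p)$ the point $1+p$ has distance $|p|$ from $1$ and distance $\geq p^{-1/(p-1)}>|p|$ from every nontrivial root of unity, so it is not a limit of torsion points; the same happens in $\uP_X^\tt(K)$ whenever $\uP^0_X$ is positive-dimensional. The actual argument is of a different nature and is the paper's key technical input (Proposition~\ref{p:Pic-wt-X-is-top-tf}): the condition $L^{n!}\to 1$ is reinterpreted as the extension of $n\mapsto L^n$ to a morphism from the \emph{perfectoid} space $\uwhZ$ to the diamantine Picard functor, i.e.\ a line bundle on $X\times\uwhZ$; one then shows $\HOM(\whZ,\uP_{\wt X,\et})=1$ because, after reducing modulo $\G_m^\tt$, the relevant cohomology on $\wt X$ is a $\Q$-vector space and admits no nonzero locally constant homomorphisms from $\whZ$. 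Without evaluating the Picard functor on perfectoid test objects there is no way to make sense of, let alone exploit, the $\whZ$-linearity, so this is a missing idea rather than a detail. Separately, your easy direction is also misstated: a continuous character $\chi\colon\pi_1^{\et}(X,x)\to\C_p^\times$ need not factor through a finite quotient (e.g.\ $\Z_p\to 1+p\Z_p$), so ``$n!$ annihilates arbitrarily large finite quotients'' does not give $\chi^{n!}\to 1$; the correct argument is via the structure of the character variety as $\wh\G_m^d$ times torsion (Lemma~\ref{l:cts-char-variety}), or via the general fact that $R^1\pi_\ast$ of the strongly topological torsion sheaf $\G_m^\tt$ is again topological torsion.
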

This is the first instance of a \textit{correspondence} relating $p$-adic continuous representations to Higgs bundles with an explicit topological condition. It gives useful insights into what to expect in higher rank, e.g.\ that Chern classes do not define the correct condition in general.

\subsection{Geometrising the $p$-adic Simpson correspondence of rank one}

The main aim of this article is to show that, surprisingly, \cref{t:intro-Corollary-Cp} can be upgraded to a \textit{geometric} statement about moduli spaces: 
We will show that there is always a smooth rigid analytic moduli space $\uP^\tt_{X,\et}$ of topological torsion line bundles. This is an open rigid subgroup of the Picard variety $\uP_{X}$ if the latter exists, but in fact we do not need representability of the Picard functor for the construction.  Using the Hitchin base  $\mathcal A$ of rank one,  this allows us to define the ``Dolbeault moduli space'' of topologically torsion Higgs line bundles
\[\MD:=\uP_{X,\et}^{\tt}\times \mathcal A,\quad \text{where } \mathcal A:=H^0(X,\Omega^1(-1))\otimes_K\G_a.\]
On the other side of the $p$-adic Simpson correspondence, we have the $p$-adic character variety
\[\MB:=\HOM(\pi_1^\et(X,x),\G_m)\]
defined as the internal Hom in $v$-sheaves over $K$, where the profinite group $\pi_1^\et(X,x)$ is considered as a profinite $v$-sheaf. As we explain in \cref{s:character-variety}, the functor $\MB$ is represented by a rigid group variety, playing the role of the ``Betti moduli space'' in this context. We show:

\begin{Theorem}[\cref{t:geometric-Simpson}]\label{t:padic-Simpson-intro}
	Let $X$ be a connected smooth proper rigid space over $K$ and fix $x\in X(K)$. Then there is a natural short exact sequence of rigid analytic groups
	\begin{equation}\label{intro:seq-over-K}
		0\to \uP_{X,\et}^{\tt}\to \MB\xrightarrow{\HTlog} \mathcal A\to 0.
	\end{equation}
 On tangent spaces, the associated sequence of Lie algebras recovers the Hodge--Tate sequence
	\begin{equation}\label{r:HT-seq-splitting}
	0\to H^1_{\an}(X,\O)\to H^1_{\et}(X,\Q_p)\otimes_{\Q_p} K\xrightarrow{\HT} H^0(X,\Omega^1(-1))\to 0.
	\end{equation}
\end{Theorem}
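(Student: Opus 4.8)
The strategy is to identify both $\MB$ and the extension class of \eqref{intro:seq-over-K} through $v$-descent, reducing everything to statements about $v$-line bundles and the diamantine $v$-Picard functor. First I would recall that $\HOM(\pi_1^\et(X,x),\G_m)$, being the internal Hom of $v$-sheaves, classifies $v$-local systems of rank one on $X$, i.e.\ $\bOx$-torsors that become trivial on a pro-finite-étale cover; equivalently, it is the $v$-sheafification of $\mathcal F\mapsto H^1_v(X_{\mathcal F},\O^\times)$ restricted to the pro-finite-étale part. The key input from the main body (the ``theory of topological torsion subsheaves of $v$-sheaves'' applied to the diamantine $v$-Picard functor of \cite{heuer-diamantine-Picard}) is that the $v$-Picard functor $\uP^v_X$ sits in a short exact sequence whose topological-torsion part splits off a piece isomorphic to $\uP^{\tt}_{X,\et}$, while the ``unipotent'' complement is precisely the Hitchin base $\mathcal A = H^0(X,\Omega^1(-1))\otimes_K\G_a$ via the Hodge--Tate/$\HTlog$ map. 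Concretely, I expect the construction of $\HTlog$ to proceed by: (i) the $v$-first-Chern-class / exponential sequence $0\to \bOx/\mathcal O^{\times,+}\text{-stuff}\to \dots$, or more efficiently (ii) identifying $\MB$ with the $v$-line bundles $L$ on $X$ together with a flat $v$-connection data coming from the $\pi_1$-action, and then sending such a pair to its ``Higgs field at first order'', i.e.\ the image under the Hodge--Tate comparison $H^1_v(X,\mathcal O^\times)\to H^0(X,\Omega^1(-1))$ after passing to the associated graded of the de Rham filtration on $B_{\dR}^+/\xi^2$.

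Concretely I would carry out the argument in the following steps. \textbf{Step 1:} Show $\MB$ is representable by a smooth rigid group variety — this is asserted to be done in \S\ref{s:character-variety}, so I may cite it; its tangent space at the identity is computed by deforming a character, i.e.\ by $\mathrm{Hom}_{\cts}(\pi_1^\et(X,x),K) = H^1_{\et}(X,\Q_p)\otimes_{\Q_p}K$, giving the middle term of \eqref{r:HT-seq-splitting}. \textbf{Step 2:} Construct the map $\MB\to\mathcal A$: a $v$-line bundle $L$ on $X$ pulled back from a character has a canonical trivialisation on the universal pro-finite-étale cover $\wt X\to X$; the descent datum is a cocycle valued in $\bOx$, and composing with the logarithmic/Hodge--Tate map $\bOx\to \O(-1)$ (valid on topological-torsion sections, which is where the hypothesis ``$L^{n!}\to 1$'' is used) and taking cohomology produces a class in $H^0(X,\Omega^1(-1))$, functorially in the test object, hence a morphism of $v$-sheaves $\HTlog\colon\MB\to\mathcal A$. \textbf{Step 3:} Identify the kernel: $\HTlog(L)=0$ means the descent cocycle is, up to coboundary, valued in the subsheaf $\bOx^{a=1}$ of sections killed by the Hodge--Tate map, which by the topological-torsion analysis of \cite{heuer-diamantine-Picard} and the main body is exactly $\uP^{\tt}_{X,\et}$; here one uses that $\pi^{\tt}$ of $\G_m$ (topological torsion points) splits off the $\bOx$-part. \textbf{Step 4:} Surjectivity of $\HTlog$: realise any $\omega\in H^0(X,\Omega^1(-1))$ as coming from a character by exhibiting an explicit $v$-line bundle with prescribed Hodge--Tate invariant — this can be done using the $B_{\dR}^+/\xi^2$-lift of $X$ to split the relevant sequence, or, more intrinsically, by the local splitting of the Hodge--Tate sequence on affinoid perfectoid covers and a gluing argument; exactness in the middle then follows by comparing $K$-points and using smoothness. \textbf{Step 5:} The Lie algebra statement: apply the (exact, since everything is smooth) Lie functor to \eqref{intro:seq-over-K}; by Step 1 the middle is $H^1_{\et}(X,\Q_p)\otimes K$, the Lie algebra of $\uP^{\tt}_{X,\et}$ is $H^1(X,\O)=H^1_{\an}(X,\O)$ (the tangent space of the $v$-Picard functor, which agrees with the analytic one by \cite{heuer-diamantine-Picard}), and $\mathrm{Lie}\,\mathcal A=\mathcal A(K)=H^0(X,\Omega^1(-1))$; one then checks the differential of $\HTlog$ is the classical Hodge--Tate map, e.g.\ by naturality and the first-order description in Step 2.

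The main obstacle I anticipate is \textbf{Step 4}, the surjectivity/exactness in the middle: one must produce enough characters of $\pi_1^\et(X,x)$ with prescribed "Higgs field", which in the proper (non-projective, non-$\C_p$) generality cannot rely on Chern-class vanishing or on GAGA, and instead requires a genuinely $v$-descent-theoretic construction of $v$-line bundles with a given Hodge--Tate–Sen invariant — this is presumably where the choice of the $B_{\dR}^+/\xi^2$-lift enters essentially and where the bulk of the real work in \cite{heuer-diamantine-Picard} and the present paper's $v$-Picard analysis is needed. A secondary subtlety is checking that $\HTlog$ is a morphism of \emph{rigid analytic} groups and not merely of abstract $v$-sheaves, which I would handle by exhibiting it on a presenting perfectoid cover and invoking the representability results; and one must be careful that the topological-torsion condition is preserved so that the sequence \eqref{intro:seq-over-K} really lands in $\uP^{\tt}_{X,\et}$ rather than the full $v$-Picard group.
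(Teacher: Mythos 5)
Your overall strategy --- identify $\MB$ with the topological torsion part of the $v$-Picard functor and extract \eqref{intro:seq-over-K} from the sequence $0\to\uP_{X,\et}\to\uP_{X,v}\xrightarrow{\HTlog}\mathcal A\to 0$ of the prequel, with $\HTlog$ on characters given by $\HT\circ\log$ --- is the paper's. But you have misdiagnosed where the work lies, and your Step 4 as written would fail. Surjectivity of $\HTlog$ is the \emph{easy} part: it follows immediately from the commutative square of \cref{l:explicit-descr-of-HTlog-on-HOM}, because $\log\colon\G_m^{\tt}\to\G_a$ is surjective (\cref{l:hat-vs-p-divisible-groups}.2) and the Hodge--Tate map $\HOM(\pi_1(X),\G_a)\to\mathcal A$ is surjective by the prequel; no choice of $B_{\dR}^+/\xi^2$-lift enters anywhere in the proof of this theorem. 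Moreover, your proposal to ``use the $B_{\dR}^+/\xi^2$-lift to split the relevant sequence'' cannot work if the sequence in question is \eqref{intro:seq-over-K} itself: \cref{c:torsors-not-split} shows it is \emph{never} split as a sequence of rigid groups unless $H^0(X,\Omega^1_X)=0$, since any splitting would be a morphism from an affine space to a bounded subgroup of $\G_m^d$. The lift only enters in the comparison of moduli spaces, \cref{t:etale-comparison}.

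The two genuinely hard points, which your plan does not really address, are these. First, the identification $\MB=\HOM(\pi_1(X),\G_m)=\uP_{X,v}^{\tt}$ together with $\ker(\HTlog|_{\MB})=\uP_{X,\et}^{\tt}$ is the content of \cref{t:tt-torsion-is-1+m-torsors}; it rests on the vanishing $\HOM(\whZ,\uP_{\wt X,\et})=1$ (\cref{p:Pic-wt-X-is-top-tf}), which crucially uses that the diamantine Picard functor is defined on perfectoid test objects such as $\whZ\times Y$, together with the Cartan--Leray sequence of $\wt X\to X$ and the cohomological vanishing of \cref{p:cohomology-of-wtXxY} --- your Step 3 gestures at this but supplies no mechanism. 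Second, the functor $-^{\tt}$ is \emph{not} exact in general (the paper gives explicit counterexamples in \cref{s:tt-sheaves}), so one cannot simply ``apply $-^{\tt}$'' to the $v$-Picard sequence and retain left-exactness; the paper circumvents this by using that $\uP_{X,v}\to\mathcal A$ splits over an open subgroup $\mathcal A^{+}\subseteq\mathcal A$, so that the additive functor $-^{\tt}$ preserves exactness of the resulting split sequence. Your Steps 1, 2 and 5 (representability of $\MB$, the description of $\HTlog$ via the descent cocycle and $\log$, and the Lie-algebra computation via the fact that $\log$ is an isomorphism near the identity) do match the paper.
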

Here exactness of \eqref{intro:seq-over-K} is to be understood with respect to the \'etale topology. While the map $\HTlog$ in \eqref{intro:seq-over-K} can be defined explicitly using the $p$-adic logarithm, the first map is more subtle: It generalises and geometrises a construction of Deninger--Werner \cite{DeningerWerner_vb_p-adic_curves}, as well as of Song \cite{song2020rigid} in the case of curves over $\overline{\Q}_p$
as we explain in \S4. It moreover induces a generalised Weil pairing
\[ \uP_{X,\et}^{\tt}\times \pi_1^{\et}(X,x)\to \G_m.\]

We use \cref{t:padic-Simpson-intro} to describe a geometric $p$-adic Simpson correspondence in rank one in terms of a comparison of moduli spaces: The projection $\MD\to \mathcal A$ may be interpreted as the Hitchin fibration. 
Theorem~\ref{t:padic-Simpson-intro} now yields an analogous map $\HTlog$ on the Betti side:
\[ \begin{tikzcd}[row sep = 0cm,column sep = 1cm]
	\MB\arrow[rd,"\HTlog"'] & &	\MD \arrow[ld,"\mathrm{Hitchin}"] \\
	&  \mathcal A
\end{tikzcd}\]
From this perspective, Theorem~\ref{t:padic-Simpson-intro} says that both $\MB$ and $\MD$ are $\uP_{X,\et}^{\tt}$-torsors over $\mathcal A$, but the latter is split, while we show that the former is never split outside trivial of cases.

It follows from this that we can regard $\MB $ as an \'etale twist of $\MD$. In fact, our main result is that one can more canonically compare these two torsors in a geometric fashion:
\begin{Theorem}[\cref{t:etale-comparison}]\label{t:intro-comparison-of-moduli-spaces}
	Any choice of a $B^+_{\mathrm{dR}}/\xi^2$-lift $\mathbb X$ of $X$ induces a natural $\uP_{X,\et}[p^\infty]$-torsor $\mathbb L_{\mathbb X}\to\mathcal A$ for which there is a canonical isomorphism of rigid spaces
	\[\MB\times_{\mathcal A}\mathbb L_{\mathbb X}\isomarrow \MD\times_{\mathcal A}\mathbb L_{\mathbb X}.\]
\end{Theorem}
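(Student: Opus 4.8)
The statement to prove is Theorem~\ref{t:intro-comparison-of-moduli-spaces}: given a $B^+_{\mathrm{dR}}/\xi^2$-lift $\mathbb{X}$ of $X$, there is a $\uP_{X,\et}[p^\infty]$-torsor $\mathbb{L}_{\mathbb{X}} \to \mathcal{A}$ and a canonical isomorphism $\MB \times_\mathcal{A} \mathbb{L}_{\mathbb{X}} \isomarrow \MD \times_\mathcal{A} \mathbb{L}_{\mathbb{X}}$. Let me sketch how I'd prove this.

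Both $\MB$ and $\MD$ sit in short exact sequences of rigid groups over $\mathcal{A}$ with the same "fiber" $\uP^\tt_{X,\et}$ — for $\MD$ trivially by definition ($\MD = \uP^\tt_{X,\et} \times \mathcal{A}$), for $\MB$ via \eqref{intro:seq-over-K}. So both are $\uP^\tt_{X,\et}$-torsors over $\mathcal{A}$. The comparison isomorphism should come from trivializing the difference of these two torsors after a base change that only remembers the $p^\infty$-torsion. I'd aim to do this in the following steps.

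**Step 1: Reduce to torsion.** First, I would show that the two torsors $\MB$ and $\MD = \uP^\tt_{X,\et}\times\mathcal A$ over $\mathcal{A}$, while not isomorphic, become isomorphic after pushing out along the inclusion $\uP_{X,\et}[p^\infty]\hookrightarrow\uP^\tt_{X,\et}$ — or rather that their "difference class" in $H^1_\et(\mathcal{A}, \uP^\tt_{X,\et})$ already lives in the image of $H^1_\et(\mathcal{A}, \uP_{X,\et}[p^\infty])$. Here the key point is that the extension \eqref{intro:seq-over-K} is classified by a connecting-map-type class, and its construction (generalising Deninger–Werner and Song, and using the Weil pairing $\uP^\tt_{X,\et}\times\pi_1^\et(X,x)\to\G_m$) naturally produces a $p$-power-torsion structure: the obstruction to a section of $\MB\to\mathcal A$ is measured by the $\HTlog$ map, whose failure to split is governed by the Hodge–Tate sequence and is of an arithmetic, "$p^\infty$" nature. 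Concretely, I'd expect to define $\mathbb{L}_{\mathbb{X}}$ as the $\uP_{X,\et}[p^\infty]$-torsor classifying, fiberwise over $a\in\mathcal{A}(K)$, the compatible families of $p$-power roots realizing the difference between the two torsor structures; the $B^+_{\mathrm{dR}}/\xi^2$-lift $\mathbb{X}$ is exactly what is needed to rigidify this (it provides the splitting of the Hodge–Tate sequence on the level needed, cf. the role of the lift in Theorem~\ref{t:intro-Corollary-Cp}).

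**Step 2: Construct $\mathbb{L}_{\mathbb{X}}$ and the isomorphism.** Having the torsor $\mathbb{L}_{\mathbb{X}}\to\mathcal A$, the base changes $\MB\times_\mathcal{A}\mathbb{L}_{\mathbb{X}}$ and $\MD\times_\mathcal{A}\mathbb{L}_{\mathbb{X}}$ are both $\uP^\tt_{X,\et}$-torsors over $\mathbb{L}_{\mathbb{X}}$, and by the choice of $\mathbb{L}_{\mathbb{X}}$ they have the same class. To produce a \emph{canonical} isomorphism rather than merely an abstract one, I would exhibit an explicit trivializing section on each side over $\mathbb{L}_{\mathbb{X}}$: on the Dolbeault side this is immediate since $\MD$ is already split, $\MD\times_\mathcal{A}\mathbb{L}_{\mathbb{X}} = \uP^\tt_{X,\et}\times\mathbb{L}_{\mathbb{X}}$; on the Betti side the points of $\mathbb{L}_{\mathbb{X}}$ tautologically encode the data needed to split $\MB$ after the base change (that is what $\mathbb{L}_{\mathbb{X}}$ parametrizes). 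Comparing the two splittings gives the isomorphism, and its canonicity is then a matter of unwinding that both splittings were built from the same universal datum over $\mathbb{L}_{\mathbb{X}}$. One then checks this is compatible with the $\uP^\tt_{X,\et}$-actions, hence is an isomorphism of rigid groups over $\mathbb{L}_{\mathbb{X}}$, and in particular an isomorphism of rigid spaces.

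**Main obstacle.** The crux is Step~1: showing that the difference between the Betti and Dolbeault torsor structures over $\mathcal{A}$ is \emph{pro-$p$-torsion}, i.e. trivialized by a $\uP_{X,\et}[p^\infty]$-torsor and not merely by a $\uP^\tt_{X,\et}$-torsor. This is where the specific geometry of the exponential-free construction enters: I expect one must trace through the construction of the first map in \eqref{intro:seq-over-K} (the Deninger–Werner–Song-type map, via $p$-power isogenies on abelian-variety-like pieces of $\uP_X$ and the profinite structure of $\pi_1^\et$) and verify that the discrepancy with the naive product structure on $\MD$ lives entirely in the $p$-adic direction, which is precisely the part of the Hodge–Tate sequence \eqref{r:HT-seq-splitting} that fails to split integrally. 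A secondary technical point is handling the topological-torsion subsheaf formalism (developed earlier in the paper) carefully enough that "$\uP^\tt_{X,\et}$-torsor over $\mathcal{A}$" and the pushout/pullback operations behave as expected in the $v$-topology and descend to the étale topology — but once the torsion reduction is in place this should be formal.
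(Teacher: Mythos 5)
Your overall architecture coincides with the paper's: realise $\mathbb L_{\mathbb X}$ as a reduction of structure group of the Betti torsor $\MB\to\mathcal A$ from $\uP^{\tt}_{X,\et}$ to $\uP_{X,\et}[p^\infty]$, and then obtain the comparison from the tautological splitting of a torsor pulled back along itself, followed by pushout along $\uP_{X,\et}[p^\infty]\hookrightarrow\uP^{\tt}_{X,\et}$ (and further along $\uP^{\tt}_{X,\et}\to\uP_{X,\et}$ for the $\Bun_{v,1}$ versus $\Higgs_1$ variant, via \cref{c:pushout-relation}). Your Step~2 is exactly the paper's argument in the proof of \cref{t:etale-comparison}.

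The gap is your Step~1, which you yourself flag as the crux and leave as an expectation rather than a construction. The route you propose for it --- tracing through the Deninger--Werner/Song construction of the first map of \eqref{intro:seq-over-K} and verifying that the discrepancy with the split torsor is of ``$p^\infty$'' nature via $p$-power isogenies --- is not needed and would be considerably harder than what the paper does. The paper's construction (\cref{d:L_XX}) is direct: apply $\HOM(\pi_1(X),-)$ to the logarithm sequence $0\to\mu_{p^\infty}\to\wh\G_m\xrightarrow{\log}\G_a\to 0$, identify the resulting kernel $\HOM(\pi_1(X),\mu_{p^\infty})$ with $\uP_{X,\et}[p^\infty]$ using \cref{t:tt-torsion-is-1+m-torsors}, and define $\mathbb L_{\mathbb X}$ as the pullback of the sequence $0\to\uP_{X,\et}[p^\infty]\to\HOM(\pi_1(X),\wh\G_m)\to\HOM(\pi_1(X),\G_a)\to 0$ along the splitting $s_{\mathbb X}:\mathcal A\to\HOM(\pi_1(X),\G_a)$ of $\HT$ furnished by the lift. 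That this is a reduction of structure group of $0\to\uP^{\tt}_{X,\et}\to\MB\xrightarrow{\HTlog}\mathcal A\to 0$ then follows from the commutative square relating $\log$ to $\HTlog$ (\cref{l:explicit-descr-of-HTlog-on-HOM}) together with the identity $\HT\circ s_{\mathbb X}=\id$; no analysis of the Weil pairing or of the Deninger--Werner map enters. Since the existence of this $\uP_{X,\et}[p^\infty]$-reduction is precisely the content of the theorem beyond formal torsor manipulations, your argument does not close as written.
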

Here the local system $\mathbb L_{\mathbb X}$ can roughly be thought of as a moduli space of exponentials.
We also give a completely canonical variant of \cref{t:intro-comparison-of-moduli-spaces} over a moduli space of all lifts. These results explain the choices necessary in Faltings' $p$-adic Simpson correspondence in a geometric fashion: Any choice of an exponential induces a splitting of $\mathbb L_{\mathbb X}\to\mathcal A$ on $K$-points.
We take this as a first sign that in the $p$-adic situation, a more geometric formulation of the $p$-adic Simpson correspondence is possible. We will pursue this further  in \cite{heuer-sheafified-paCS}.
\begin{Remark}\label{r:splitting-of-HT}
	A choice of $\mathbb X$ is equivalent to a choice of splitting of the Hodge--Tate sequence \cref{r:HT-seq-splitting},
	see \cite[Proposition 7.2.5]{Guo_HodgeTate}. If we are given a model of $X$ over a local field $L|\Q_p$, this induces such a lift $\mathbb X$, and Theorem~\ref{t:intro-comparison-of-moduli-spaces} then provides a  completely canonical comparison.
\end{Remark}

\begin{Remark}\label{r:Remark-comparison-to-complex}
	It is very surprising to us that the $p$-adic Simpson correspondence in rank one allows for a more refined comparison of the rigid analytic structures as in Theorem~\ref{t:intro-comparison-of-moduli-spaces}: If $X$ is instead a smooth projective curve over $\C$, the analogue of \eqref{intro:seq-over-K} is the exact sequence
	\begin{equation}\label{intro:seq-over-C}
		0\to \Pic^0(X)\to \Hom(\pi_1(X),\C^\times)\to H^0(X,\Omega^1)\to 0
	\end{equation}
	where the first map is defined as
	$\Pic^0(X)=\Hom(\pi_1(X),S^1)\to \Hom(H_1(X,\Z),\C^\times)$ using the decomposition $\C^\times=S^1\times \R^\times_+$,
	see \cite[p21]{SimpsonCorrespondence}\cite[\S2]{ModuliRankOne}.
	This is clearly only real-analytic and not complex-analytic.
	Instead, the only result we know in the literature that resembles Theorem~\ref{t:intro-comparison-of-moduli-spaces} is Groechenig's result  in the mod $p$-Simpson correspondence that for curves over $\overline{\F}_p$, the de Rham moduli stack is a twist of the Dolbeault moduli stack \cite{Groechenig_modp-Simpson}.
\end{Remark}

We now discuss the precise relation to previous results in the $p$-adic Simpson correspondence, and explain in more detail the topological torsion condition and the moduli spaces.

\subsection{The conjectural $p$-adic Simpson correspondence: known results}
Let $K|\Q_p$ be any complete algebraically closed extension
 and let $X$ be a connected smooth proper rigid space over $K$ with fixed base-point $x\in X(K)$. In this setting, the conjectural $p$-adic Simpson correspondence is expected to be an equivalence of categories (depending on choices)
 \begin{equation}\label{eq:conjectural-Simpson}
	\Big\{\begin{array}{@{}c@{}l}\text{finite dimensional continuous }\\\text{$K$-linear representations of $\pi_1^{\et}(X,x)$} \end{array}\Big\} \isomarrow \Big\{\begin{array}{@{}c@{}l}\text{Higgs bundles on $X$}\\\text{satisfying...??}\end{array}\Big\},
\end{equation}	
	where ``??'' is a condition yet to be identified. 
In order to construct a fully faithful functor from representations to Higgs bundles, Faltings \cite{Faltings_SimpsonI} introduced a category of ``generalised representations'' into which representations of $\pi_1^{\et}(X,x)$ embed fully faithfully. This category can be shown to be equivalent to the category of $v$-vector bundles on $X$ \cite[Proposition~2.3]{heuer-G-torsors-perfectoid-spaces}. If we gloss over some technical differences in setups, then reformulated in this language, Faltings proved that if $X$ is a smooth projective curve, there is an equivalence
\begin{equation}\label{eq:conj-non-ab-Hodge-corresp}
\{\text{$v$-vector bundles on $X$}\} \isomarrow \{\text{Higgs bundles on $X$}\}.
\end{equation}
Such an equivalence is expected to exist for any smooth proper rigid space $X$ over $K$. While this is currently not known in general,
Faltings constructs a ``local'' version of this correspondence  under additional ``smallness'' assumptions, which was reinterpreted in terms of period rings and studied in detail by Abbes--Gros and Tsuji \cite{AGT-p-adic-Simpson}. There are other known instances, e.g.\ due to Wang for $X$ of good reduction \cite{Wang-Simpson}. In rank one, we constructed an equivalence for general $X$ by way of a ``Hodge--Tate sequence for $\G_m$'' for the group $\Pic_v(X)$ of isomorphism classes of $v$-line bundles on $X$:  a short exact sequence
\begin{equation}\label{eq:pic-HT-seq}
0\to \Pic_{\et}(X)\to \Pic_v(X)\to  H^0(X,\Omega^1(-1))\to 0,
\end{equation}
see \cite{heuer-v_lb_rigid}.
We note that neither of the (conjectural) equivalences \cref{eq:conjectural-Simpson} and \cref{eq:conj-non-ab-Hodge-corresp} are expected to be canonical, instead they should depend on choices of an exponential for $K$ and a $B_{\dR}^+/\xi^2$-lift of $X$. For example, these choices induce a splitting of \cref{eq:pic-HT-seq}, which is not canonically split.

\medskip

Given correspondence \cref{eq:conj-non-ab-Hodge-corresp} in any setup where it is known,
a fundamental open problem in $p$-adic non-abelian Hodge theory, raised by Faltings in \cite[\S5]{Faltings_SimpsonI}, is to find condition ``??'':
\begin{Question}\label{q:condition-Q}
 Which Higgs bundles on $X$ correspond to continuous representations of $\pi_1^{\et}(X,x)$ under the (in general conjectural) $p$-adic non-abelian Hodge correspondence \cref{eq:conj-non-ab-Hodge-corresp}?
\end{Question}

In the case of vanishing Higgs field, this question had been studied by Deninger--Werner \cite{DeningerWerner_vb_p-adic_curves}\cite{DeningerWerner_Simpson} prior to Faltings' article. Based on their work, Xu has recently made progress on \cref{q:condition-Q} in the case of curves by identifying a condition for Higgs bundles to be ``potentially Deninger--Werner'', from which he is able to construct a functor to representations \cite{XuTransport_parallele}. This raises the question if this condition has a more classical description that can be used in practice to see whether a Higgs bundle  is ``potentially Deninger--Werner''.

The construction of Deninger--Werner has been extended to the rigid analytic setup by W\"urthen \cite[\S3]{wuerthen_vb_on_rigid_var}, whose results suggest that in the special case of vanishing Higgs field, pro-finite-\'etale vector bundles give the correct subcategory of Higgs bundles. Here a vector bundle is called pro-finite-\'etale if it becomes trivial on a pro-finite-\'etale cover 
in Scholze's  pro-\'etale site \cite{Scholze_p-adicHodgeForRigid} (we note that this is strictly stronger than being trivialised by a finite \'etale cover). Such vector bundles were further studied in \cite{MannWerner_LocSys_p-adVB}.
We showed in \cite[\S5]{heuer-v_lb_rigid} that the pro-finite-\'etale condition also gives the correct category for all Higgs bundles in rank one, namely we constructed an equivalence as in \cref{eq:conjectural-Simpson} where the left hand side consist of continuous characters and the right hand side of Higgs line bundle whose underlying line bundle is pro-finite-\'etale. In \cite{HMW-abeloid-Simpson}, we deduced that on abelian varieties, the answer to \cref{q:condition-Q} is given by pro-finite-\'etale Higgs bundles also in higher rank.

However, these results in turn raise the question how pro-finite-\'etale vector bundles can be characterised more classically, as it is a priori difficult to verify that a given vector bundle is pro-finite-\'etale. In \cite[\S5]
{heuer-v_lb_rigid}, we characterised pro-finite-\'etale line bundles  as those admitting a reduction of structure group to a certain subsheaf $\G_m^{\tt}\subseteq \G_m$, but we left it open how such line bundles can be described more concretely, e.g.\ in terms of the Picard variety.

\medskip

 \cref{t:intro-Corollary-Cp} now gives a fully satisfactory answer to this question, thus to \cref{q:condition-Q}, for line bundles on any smooth proper rigid space. One reason why we think this description is the ``correct'' one is that it is geometric, i.e.\ it can be phrased in terms of moduli spaces, opening up a completely new possibility for a geometric formulation of the $p$-adic Simpson correspondence: We believe that \cref{t:intro-comparison-of-moduli-spaces} stands a chance to generalise to higher rank, as we will explore further in \cite{heuer-sheafified-paCS}. Moreover, this suggests that a moduli-theoretic approach to non-abelian Hodge theory may help answer \cref{q:condition-Q} in general.

\subsection{A new geometric approach via diamantine Picard functors}
We now elaborate on our strategy to prove \cref{t:intro-Corollary-Cp}, \cref{t:padic-Simpson-intro} and \cref{t:intro-comparison-of-moduli-spaces}, as well as on the construction of the moduli spaces, which yields additional results of independent interest:

A crucial new technical ingredient to this article are the diamantine Picard functors introduced for this purpose in \cite{heuer-diamantine-Picard}:
To motivate this, let us first mention that we will show that a line bundle $L$ on $X$ is pro-finite-\'etale if and only if it extends to a line bundle on the adic space $X\times \uwhZ$ whose specialisation at each $n\in \Z\subseteq \wh{\Z}$ is isomorphic to $L^n$. Here $\uwhZ$ denotes the adic space over $\Spa(K)$ which represents the $v$-sheaf $\varprojlim_{N\in \N} \underline{\Z/N\Z}$ over $K$.

We would like to say that this induces a morphism of adic spaces from $\uwhZ$ to the rigid Picard variety of $X$. However, $\uwhZ$ is no longer a rigid space, rather it is perfectoid. We therefore use the diamantine Picard functor $\uP_{X}$ defined on perfectoid test objects. More precisely, according to \cref{eq:pic-HT-seq}, there are two different such functors, one for \'etale line bundles, denoted by $\uP_{X,\et}$, the other for $v$-line bundles, denoted by $\uP_{X,v}$. Explicitly, for $\tau \in \{\et,v\}$,
\[ \uP_{X,\tau}:\Perf_K\to \mathrm{Ab}, \quad T\mapsto \Pic_\tau(X\times T)/\Pic_\tau(T).\]

In order to characterise maps to $\uP_{X,\tau}$ that ``extend $\whZ$-linearly'' as above, we now define:

\begin{Definition}
	For any $v$-sheaf $F$ on $\Perf_K$, the topological torsion subsheaf $F^\tt\subseteq F$ is the sheaf-theoretic image of the map
	$\HOM(\uwhZ,F)\to F$ given by evaluation at $1\in \whZ$.
\end{Definition}
We will give a more classical description of $F^\tt$ when $F$ is a rigid group:
For example, the aforementioned topological torsion subsheaf  $\G_m^\tt\subseteq \G_m$ can be shown to be represented by the open subgroup generated by the open disc around $1$ and the roots of unity.

These definitions, combined with a systematic study of topological torsion subsheaves, allow us to give a much more conceptual characterisation of pro-finite-\'etale line bundles than the one we previously gave in \cite[Theorem~5.7.3]{heuer-v_lb_rigid}: Namely, in the category of diamonds, there is a universal pro-finite-\'etale cover
$\wt X\to X$,
a pro-finite-\'etale torsor under $\pi^{\et}_1(X,x)$. The following result now describes which line bundles are trivialised by pro-finite-\'etale covers, or equivalently by $\wt X$, and thus give rise to $p$-adic characters:

\begin{Theorem}[\cref{t:tt-torsion-is-1+m-torsors}]\label{t:Pictt-is-profet-intro}
	Let $\tau\in\{\et,v\}$.
	There is a short exact sequence of sheaves 
	\[ 0\to \uP^{\tt}_{X,\tau}\to \uP_{X,\tau}\to \uP_{\wt X,\tau}\]
	on $\Perf_{K,\tau}$.
The Cartan--Leray sequence of $\wt X\to X$ thus induces a canonical isomorphism
\[\uP^{\tt}_{X,v}=\HOM(\pi_1^{\et}(X,x),\G_m).\]
\end{Theorem}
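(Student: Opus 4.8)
The plan is to establish the exact sequence $0\to \uP^{\tt}_{X,\tau}\to \uP_{X,\tau}\to \uP_{\wt X,\tau}$ by identifying $\uP^{\tt}_{X,\tau}$ as precisely the subsheaf of those classes that become trivial after pullback to $\wt X$, and then to feed this into the Cartan--Leray spectral sequence. First I would unwind the definition of the topological torsion subsheaf: by definition $\uP^{\tt}_{X,\tau}$ is the image of $\HOM(\uwhZ,\uP_{X,\tau})\to \uP_{X,\tau}$, evaluation at $1$. The key input is the adic space model of $\uwhZ$: a point of $\HOM(\uwhZ,\uP_{X,\tau})(T)$ over a perfectoid $T$ should correspond to a line bundle on $X\times T\times\uwhZ$ (modulo those on $T\times \uwhZ$) whose restriction to $X\times T\times\{n\}$ for $n\in\Z$ is the $n$-th power of its restriction to $X\times T\times\{1\}$ — this is the ``extends $\whZ$-linearly'' condition announced before the Definition in the excerpt. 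I would prove that such a $\whZ$-linear extension exists for $L$ if and only if $L$ is trivialised by the pro-finite-\'etale tower, using that $\wt X\to X$ is a $\pi_1^{\et}(X,x)$-torsor and $\uwhZ$ is built from $\varprojlim_N \underline{\Z/N\Z}$; concretely, the finite-\'etale layers of $\wt X$ provide the needed descent data realising the powers $L^n$ coherently across $n$.

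Next I would prove exactness at $\uP_{X,\tau}$, i.e.\ that $\uP^{\tt}_{X,\tau}=\ker(\uP_{X,\tau}\to\uP_{\wt X,\tau})$. The inclusion $\uP^{\tt}_{X,\tau}\subseteq \ker$ follows from the previous paragraph since a $\whZ$-linear extension restricted to the layer ``$0\in\whZ$'' gives the trivial bundle, forcing triviality on $\wt X$ after passing up the tower. For the reverse inclusion, given a class killed by pullback to $\wt X$, I would produce the $\whZ$-linear extension by spreading the trivialisation over $\wt X$ across the pro-finite-\'etale layers and re-gluing; this is where one uses that $\wt X=\varprojlim X_i$ with $X_i\to X$ finite \'etale Galois, so that a trivialisation on some $X_i$ descends to the required section of $\HOM(\uwhZ,\uP_{X,\tau})$. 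Exactness at $\uP^{\tt}_{X,\tau}$ (injectivity of the first map) is formal, as it is a subsheaf by construction. Throughout, I would work $\tau$-locally on $\Perf_{K,\tau}$ so that $\tau$-descent handles the sheafification implicit in ``sheaf-theoretic image''.

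For the final identification, I would run the Cartan--Leray sequence for the pro-finite-\'etale $\pi_1^{\et}(X,x)$-torsor $\wt X\to X$ applied to the sheaf $\G_m$ on the relevant site. Because $\wt X$ is pro-finite-\'etale and simply connected in the relevant sense, $\Pic_\tau(\wt X\times T)$ should see no contribution from $H^1$ of the structure sheaf along the tower, so the low-degree terms of Cartan--Leray give an isomorphism between $\ker(\uP_{X,\tau}\to\uP_{\wt X,\tau})$ and the continuous cohomology $H^1_{\cts}(\pi_1^{\et}(X,x),\G_m)$, which in the $v$-sheaf formalism is exactly $\HOM(\pi_1^{\et}(X,x),\G_m)$ since $\G_m$ has no higher cohomology relevant here and $\pi_1^{\et}(X,x)$ is profinite. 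Combining with the first two paragraphs yields $\uP^{\tt}_{X,v}=\HOM(\pi_1^{\et}(X,x),\G_m)$.

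The main obstacle I anticipate is the precise bookkeeping in the equivalence ``$\whZ$-linear extension of $L$ across $\uwhZ$'' $\Leftrightarrow$ ``trivialisation on $\wt X$'': one must match the perfectoid geometry of $\uwhZ$ (which is not a rigid space) with the pro-finite-\'etale tower, control the quotient by $\Pic_\tau(T\times\uwhZ)$ uniformly in $T$, and ensure all constructions are compatible with the sheafification defining the topological torsion subsheaf. The exactness statement itself is only claimed as a three-term sequence (not right-exact), which suggests the cokernel of $\uP_{X,\tau}\to\uP_{\wt X,\tau}$ is genuinely hard to control — and indeed I would not attempt it, as the theorem wisely avoids asserting surjectivity.
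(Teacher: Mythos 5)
Your overall skeleton (identify $\uP^{\tt}_{X,\tau}$ with $\ker(\uP_{X,\tau}\to\uP_{\wt X,\tau})$, then apply Cartan--Leray to get the character variety) matches the paper's, and the Cartan--Leray step is essentially right: one needs $\G_m(\wt X\times Y)=\G_m(Y)$ and then $H^1_{\cts}(\pi_1,\O^\times(Y))=\Hom_{\cts}(\pi_1,\O^\times(Y))$ since the group is profinite and acts trivially. But the central step --- the identification of $\uP^{\tt}_{X,\tau}$ with the kernel --- is where your argument has a genuine gap, in both directions.

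For the containment $\ker\subseteq\uP^{\tt}$, you propose that a class killed by pullback to $\wt X$ is ``trivialised on some $X_i$'' and that this finite-level trivialisation descends to the required section of $\HOM(\uwhZ,\uP_{X,\tau})$. This cannot work: if $f:X_i\to X$ is finite \'etale of degree $d$ and $f^*L$ is trivial, then $L$ is $d$-torsion (take determinants of $f_*f^*L\cong L\otimes f_*\O$), whereas $\uP^{\tt}_{X,\et}$ is an \emph{open} subgroup of positive dimension whenever $H^1(X,\O)\neq 0$ and so contains many non-torsion classes. The trivialisation of a topologically torsion line bundle on $\wt X$ is a genuinely pro-object phenomenon that does not descend to any finite layer, and correspondingly $\Pic(\wt X\times Y)$ is \emph{not} $\varinjlim\Pic(X_i\times Y)$ (the paper's colimit lemma holds only for $\O^+/p$ and $\G_m/\G_m^{\tt}$, not for $\G_m$). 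The converse containment is also not established by ``restricting the $\whZ$-linear family to the layer $0$'': triviality of the fibre at $0$ says nothing about the fibre at $1$, which is $L$ itself. What is actually needed, and what your proposal does not supply, are two structural inputs: (i) the vanishing $\HOM(\uwhZ,\uP_{\wt X,\et})=1$, proved by computing the internal Hom as an equaliser of Picard groups over $\uwhZ^k\times\wt X\times Y$ and using that $H^1_{\et}(\uwhZ^k\times\wt X\times Y,\G_m/\G_m^{\tt})$ consists of locally constant maps into a $\Q$-vector space, which admits no nonzero homomorphism from $\uwhZ$; and (ii) the fact that $R^1\pi_{\ast}$ of a \emph{strongly} topologically torsion sheaf such as $\G_m^{\tt}$ is again topologically torsion (via preservation of injectives by $\HOM(\uwhZ,-)$ and a Grothendieck spectral sequence). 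These combine to show $R^1\pi_{\tau\ast}\G_m^{\tt}=\uP^{\tt}_{X,\tau}=\ker(\uP_{X,\tau}\to\uP_{\wt X,\tau})$, with the kernel identification coming from the vanishing of $R^1\wt\pi_{\tau\ast}\G_m^{\tt}$ and the injectivity of $R^1\pi_{\tau\ast}(\G_m/\G_m^{\tt})\to R^1\wt\pi_{\tau\ast}(\G_m/\G_m^{\tt})$. Finally, for $\tau=v$ there is an additional subtlety you do not address: one must show the map $\uP_{X,v}\to\uP_{\wt X,v}$ factors through $\uP_{\wt X,\et}$ (a diagram chase with $\HTlog$) and that $\uP_{\wt X,\et}\to\uP_{\wt X,v}$ is injective, before the \'etale vanishing result can be applied.
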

Here we crucially use that $\uP_{X,\tau}$ is defined on perfectoid test objects such as $\uwhZ$. On $K$-points, we deduce that the line bundles on $X$ trivialised by $\wt X$ are given by
$\uP^{\tt}_{X,\et}(K)=\Pic(X)^\tt$,
i.e.\ those classes $L$ in the topological group $\Pic(X)$ for which $L^{n!}\to 1$ for $n\to \infty$.
To deduce Theorem~\ref{t:intro-Corollary-Cp}, we show that for projective $X$ over $\C_p$, we in fact have
\begin{equation}\label{eq:Pic-tt-vs-Pictau}
\uP_{X,\et}^{\tt}(\C_p)=\Pic_{\et}^{\tau}(X)
\end{equation}
where $\Pic_{\et}^{\tau}(X)$ is the group of line bundles with torsion N\'eron--Severi class.
These are precisely the line bundles with vanishing (rational) Chern classes. To see \eqref{eq:Pic-tt-vs-Pictau}, we use that $\uP^0_{X,\et}$ is represented by an abelian variety $A$ over $\C_p$, and we show that  $A^\tt(\C_p)=A(\C_p)$. For this we use that the residue field of $\C_p$ is a union of finite fields, and that $v_p(\C_p)=\Q$. 

On the other hand, this shows that finding the correct condition ``??'' in \eqref{eq:conjectural-Simpson} is very subtle:
\begin{Remark}
	Over any extension $K\supsetneq\C_p$,  the  inclusion $\uP_{X,\et}^{\tt}(K)\subseteq \Pic^{\tau}(X)$ can become strict: If $X$ is a curve of good reduction over $K$ with Jacobian $J=\uP_{X,\et}^{0}$, then  $J(K)^\tt$ are precisely those points of $J(K)$ that reduce to a torsion point on the special fibre. This special case of curves with good reduction is implicit in Faltings' discussion in \cite[p.856]{Faltings_SimpsonI}.
\end{Remark}

\begin{Remark}
	Already over $\C_p$, the equality \eqref{eq:Pic-tt-vs-Pictau} and thus the first part of \cref{t:intro-Corollary-Cp} can become false if we only assume that $X$ is proper rather than projective, since  $\uP^0_X$ is then no longer necessarily an abelian variety: For example, if $X$ is a Hopf variety, then $\uP_X=\G_m$ (see \cite[Remark~0.1.3]{HartlLutk}).  But we have $\G_m^\tt(\C_p)=\O_{\C_p}^\times\subsetneq \C_p^\times=\G_m(\C_p)$.
\end{Remark}
\begin{Remark} In terms of rigid spaces in the sense of Tate, the map $\uP_{X,\et}^{\tt}\hookrightarrow \uP_{X,\et}$ may by \cref{eq:Pic-tt-vs-Pictau} be an open bijective map that is no longer surjective when passing to adic spaces.
\end{Remark}

From this perspective,  it arguably feels like a coincidence that ``vanishing Chern classes'' are the correct answer to \cref{q:condition-Q} for line bundles on projective varieties over $\C_p$.
Instead, as already in rank one the cleanest formulation of the condition of ``topological torsion'' requires the geometric structure of $\uP_{X,\et}$ as a rigid space, this suggests that in general, analytic moduli spaces may be necessary to answer \cref{q:condition-Q} over general fields.

\subsection*{Structure of the article} In section 2 we introduce the notion of topological torsion subsheaves and study these systematically. In particular, with an eye towards applications to Picard varieties, we study topological torsion subsheaves of rigid groups.

In section 3 we introduce the $p$-adic character variety and explain why this is isomorphic to the topological torsion subsheaf of the $v$-Picard functor, that is we prove \cref{t:Pictt-is-profet-intro}. 

In section 4 we prove \cref{t:padic-Simpson-intro}. We also give the application to Deninger-Werner's map by constructing a generalised Weil pairing. As a further application, we comment on  applications to the study of the structure of rigid Picard functors.

In section 5, we prove the \'etale comparison of Betti and Dolbeault moduli spaces, \cref{t:intro-comparison-of-moduli-spaces}, as well as several related comparison isomorphisms.

\subsection*{Acknowledgements}
We would like to thank Johannes Ansch\"utz, Yajnaseni Dutta,  Arthur-C\'esar Le Bras, Lucas Mann, Peter Scholze, Alexander Thomas, Annette Werner and Matti W\"urthen for very helpful discussions.

This work was funded by the Deutsche Forschungsgemeinschaft (DFG, German Research Foundation) under Germany's Excellence Strategy-- EXC-2047/1 -- 390685813. The author was supported by the DFG via the Leibniz-Preis of Peter Scholze.
\subsection*{Notation}
Throughout, let $K$ be a (complete) non-archimedean field of residue characteristic $p$. We denote by $\O_K=K^{\circ}$ the ring of integers consisting of power-bounded elements. More generally, we denote by $K^+\subseteq K$ any ring of integral elements.
Let $\m\subseteq K^+$ be the maximal ideal, and $\Gamma$ the value group. From \S3 on, $K$ will be a perfectoid field extension of $\Q_p$ in the sense of \cite{perfectoid-spaces}, and we then use almost mathematics with respect to $\m$.

By a rigid space over $(K,K^+)$ we mean an adic space locally of topologically finite type over $\Spa(K,K^+)$.  In the following, we shall often abbreviate $\Spa(K,K^+)$ by $\Spa(K)$ when the ring $K^+$ is clear from the context. In particular, we then denote by $\mathrm{SmRig}_{K}$ the category of smooth rigid spaces over $\Spa(K,K^+)$, by $\Perf_{K}$ the category of perfectoid spaces over $(K,K^+)$, and by $\mathrm{LSD}_{K}$ the category of locally spatial diamonds over $\Spd(K,K^+)$ in the sense of \cite[\S11]{etale-cohomology-of-diamonds}. More generally, for any rigid space $X$ we denote by $\mathrm{SmRig}_{X}$ the slice category of rigid spaces over $X$.
In accordance with this notation, for any two rigid spaces $X$ and $Y$ over $K$, we also write $Y_X:=Y\times_KX$. Similarly for the other categories.

We denote by $\B^d$ the rigid unit polydisc of dimension $d$ over $(K,K^+)$, by $\G_m$ the rigid torus over $(K,K^+)$ defined as the analytification of the algebraic torus, and similarly by $\G_a$ the rigid affine line with its additive structure. We  denote by $\wh{\G}_m$ the open unit disc around $1$ in $\G_m$. These are all examples of rigid groups: 
By a rigid group we mean a group object in the category of rigid spaces over $(K,K^+)$. In this article, we always assume this to be commutative without further mention. Similarly for adic groups.

There is a fully faithful diamondification functor $\mathrm{SmRig}_{K}\hookrightarrow \mathrm{LSD}_{K}$, $X\mapsto X^\diamondsuit$, see \cite[\S15.6]{etale-cohomology-of-diamonds}, and we shall freely identify $X$ with its image under this functor. In particular, we often drop the diamond $-^\diamond$ from notation. For example, we just write $X_v$ for the $v$-site $X^\diamondsuit_v$.

For any formal scheme $\mathfrak S$ over $\Spf(K^+)$, we denote by $\mathfrak S^\ad_{\eta}$ the adic generic fibre of \cite[\S2.2]{ScholzeWeinstein}. If $\mathfrak S$ is locally of topologically finite type over $\Spf(K^+)$, this is a rigid space over $\Spa(K,K^+)$ in the sense defined above. We say that a rigid space has good reduction if it is isomorphic to such an adic generic fibre.

\section{The topological torsion subsheaf}\label{s:def-of-tt}\label{s:profinite-and-locally-constant-sheaves}

The purpose of this section is to introduce a new technical device that plays a key role in this article: the topological torsion subgroup of a $v$-sheaf.

\subsection{Locally profinite $v$-sheaves}
We start by recalling the definition of locally constant sheaves, and record some basic lemmas for later reference. For this it is beneficial to work in greater generality than in the introduction, over  general non-archimedean fields $(K,K^+)$. We first adapt \cite[Example~11.12]{etale-cohomology-of-diamonds} to this setting.
\begin{Definition}
	Let $X$ be an adic space over $\Spa(K,K^+)$. Then we have a natural injective map $X(K,K^+)\hookrightarrow |X|$ sending a morphism $x:\Spa(K,K^+)\to X$ to the image of the unique closed point under $x$. We use this to endow $X(K,K^+)$ with the subspace topology.
\end{Definition}
\begin{Definition}\label{d:profinite-sheaves}
	For any locally profinite set $G$ we define a \textbf{locally profinite $v$-sheaf} $\underline{G}$:
	\begin{enumerate}
		\item If $G$ is a discrete topological space, then we associate to $G$ the locally constant sheaf $\underline{G}$ on $\Perf_{K,v}$ that is the sheafification of the presheaf $X\mapsto G$. This is represented by the adic space $\sqcup_G\Spa(K,K^+)$.
		
		\item If $G=\varprojlim_{i\in I} G_i$ is a profinite set where the $G_i$ are finite sets considered as discrete topological spaces, then we associate to $G$ the $v$-sheaf of sets $\underline{G}:=\textstyle\varprojlim_{i\in I} \underline{G_i}$.
		By \cite[Proposition~2.4.5]{ScholzeWeinstein}, this is represented by the affinoid adic space
		\[\underline{G}=\Spa(\Map_{\cts}(G,K),\Map_{\cts}(G,K^+))\sim \textstyle\varprojlim_{i\in I}\underline{G_i}.\]
		This is sheafy as by  Lemma~\ref{l:points-of-underlineH} below, any cover of $|\underline{G}|$ admits a disjoint refinement.
		\item In both the profinite and the discrete case, we have a natural homeomorphism $|\underline{G}|=|\Spa(K,K^+)|\times G$ by Lemma~\ref{l:points-of-underlineH} below.
		Simultaneously generalising both 1 and 2, we can therefore extend the definition to locally profinite sets $G$ by glueing.
		
	\end{enumerate}
If $G$ is a locally profinite group, then $\underline{G}$ inherits the structure of an adic group: This is immediate from the observation that the functor $\underline{\phantom{-}}$ commutes with finite products.
\end{Definition}
The following lemma was used in the definition:
\begin{Lemma}\label{l:points-of-underlineH}
	Let $G$ be a locally profinite set, then we have natural homeomorphisms
	\[ |\underline{G}|=|\Spa(K,K^+)|\times G \quad \text{and} \quad \underline{G}(K,K^+)=G.\]
\end{Lemma}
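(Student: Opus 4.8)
The statement to prove is Lemma~\ref{l:points-of-underlineH}: for a locally profinite set $G$, there are natural homeomorphisms $|\underline{G}| = |\Spa(K,K^+)| \times G$ and $\underline{G}(K,K^+) = G$. The plan is to proceed by the same case analysis used in Definition~\ref{d:profinite-sheaves}, reducing the locally profinite case to the profinite case and the profinite case to the finite/discrete case. First I would treat the \emph{discrete} case: if $G$ is discrete, then $\underline{G}$ is represented by $\sqcup_G \Spa(K,K^+)$, so $|\underline{G}| = \sqcup_G |\Spa(K,K^+)| = |\Spa(K,K^+)| \times G$ on the nose (the product with a discrete space being just the disjoint union), and the $(K,K^+)$-points are the connected-component choices, i.e.\ $G$ itself. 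This is essentially immediate from the definition.

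Next I would treat the \emph{profinite} case $G = \varprojlim_{i} G_i$. Here $\underline{G} = \Spa(A, A^+)$ with $A = \Map_{\cts}(G, K)$, $A^+ = \Map_{\cts}(G, K^+)$, and by \cite[Proposition~2.4.5]{ScholzeWeinstein} this is the tilde-limit $\varprojlim_i \underline{G_i}$. For the underlying topological space, I would invoke that for a cofiltered tilde-limit of spectral spaces one has $|\varprojlim_i \underline{G_i}| = \varprojlim_i |\underline{G_i}|$; using the discrete case this is $\varprojlim_i (|\Spa(K,K^+)| \times G_i) = |\Spa(K,K^+)| \times \varprojlim_i G_i = |\Spa(K,K^+)| \times G$, where the middle step uses that $|\Spa(K,K^+)|$ is quasi-compact so the limit commutes with the product. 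One subtlety I would be careful about is that the limit of the spaces $|\underline{G_i}|$ is a limit of the \emph{underlying} topological spaces, and I want to make sure the comparison map $|\underline{G}| \to \varprojlim |\underline{G_i}|$ is a homeomorphism and not merely a continuous bijection — this follows from both sides being spectral and the map being spectral, or alternatively can be checked by hand using that continuous functions on $G$ factor through some $G_i$. For the $(K,K^+)$-points: a map $\Spa(K,K^+)\to \underline{G}$ is a continuous $K^+$-algebra map $\Map_{\cts}(G,K^+)\to K^+$ (compatibly with the $K$-structure), equivalently a compatible system of points of the $\underline{G_i}$, equivalently an element of $\varprojlim_i G_i = G$; concretely, evaluation at a point $g \in G$ gives such a map and every continuous character of $\Map_{\cts}(G, \cdot)$ arises this way by a standard argument (the maximal ideals of $\Map_{\cts}(G,K)$ over the trivial valuation correspond to points of $G$).

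Finally, for a general \emph{locally profinite} set $G$, I would use that $G$ is a disjoint union of profinite (in fact compact open) pieces, $G = \sqcup_{j} G_j$, and that $\underline{G}$ was \emph{defined} by glueing the $\underline{G_j}$ along this decomposition; both claimed homeomorphisms are compatible with disjoint unions, so they follow from the profinite case. I would also remark on naturality: all the identifications are functorial in $G$ because $\underline{(-)}$ commutes with the relevant (co)limits, and functorial in $\Spa(K,K^+)$ in the evident way.

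\textbf{Main obstacle.} The only genuinely non-formal point is the profinite case: verifying that $|\underline{G}| \cong |\Spa(K,K^+)| \times G$ as topological spaces, i.e.\ that passing to underlying spaces commutes with the cofiltered tilde-limit $\underline{G} = \varprojlim_i \underline{G_i}$, and that the resulting continuous bijection is actually a homeomorphism. This is where I would lean on \cite[Proposition~2.4.5]{ScholzeWeinstein} together with the general fact that for cofiltered limits of affinoid adic spaces along tilde-limits the underlying spaces are the inverse limit (a spectral-spaces argument), plus quasi-compactness of $|\Spa(K,K^+)|$ to pull the constant factor out of the limit. Everything else — the discrete case, the $(K,K^+)$-points, and the glueing to the locally profinite case — is routine.
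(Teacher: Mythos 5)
Your proposal is correct and follows essentially the same route as the paper's (much terser) proof: reduce the locally profinite case to the profinite case, use the tilde-limit description $\underline{G}\sim\varprojlim\underline{G_i}$ from \cite[Proposition~2.4.5]{ScholzeWeinstein} to identify $|\underline{G}|$ and $\underline{G}(K,K^+)$ with the corresponding inverse limits, and conclude from the finite/discrete case. The extra care you take about the comparison map being a homeomorphism and about commuting the product with the limit is already packaged into the definition of a tilde-limit, so no further argument is needed there.
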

\begin{proof}
	We can reduce to profinite sets $G=\varprojlim G_i$, for which we have
	\[|\underline{G}|=\varprojlim |G_i| \quad \text{and} \quad \underline{G}(K,K^+)=\varprojlim G_i(K,K^+).\]
	This reduces us to the case of finite sets, where the statements are clear.
\end{proof}

\begin{Lemma}\label{l:underline-is-left-adjoint}
	Let $H$ be a locally profinite set, and let $G$ be any adic space over $(K,K^+)$. Then we have
	\[ \Mor_K(\underline{H},G)=\Mapc(H,G(K,K^+)),\]
	functorially in $H$ and $G$.
	If $H$ is a locally profinite group and $G$ is an adic group for which $G(K,K^+)$ is a topological group, this induces
	\[ \Hom(\underline{H},G)=\Hom_{\cts}(H,G(K,K^+)).\]
\end{Lemma}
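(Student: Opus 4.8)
The plan is to prove the bijection $\Mor_K(\underline{H},G)=\Mapc(H,G(K,K^+))$ first and then deduce the group-theoretic refinement formally. I would begin by reducing to the profinite case: since a locally profinite set $H$ is a disjoint union of profinite sets and $\underline{H}$ is built by glueing the corresponding affinoid pieces, a morphism $\underline{H}\to G$ is the same as a compatible family of morphisms on each profinite piece, and likewise a continuous map $H\to G(K,K^+)$ decomposes over the clopen partition; so it suffices to treat $H=\varprojlim_{i\in I}H_i$ profinite with $H_i$ finite.

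For $H$ profinite, write $\underline{H}=\varprojlim_i\underline{H_i}$ as in \cref{d:profinite-sheaves}(2), where each $\underline{H_i}=\sqcup_{H_i}\Spa(K,K^+)$. The key input is that this limit is realised at the level of adic spaces by $\underline{H}=\Spa(\Mapc(H,K),\Mapc(H,K^+))$ and is a cofiltered limit of the $\underline{H_i}$ in the category of adic spaces with the property recorded in \cite[Proposition~2.4.5]{ScholzeWeinstein} (and used in the excerpt). First I would establish the finite case: $\Mor_K(\sqcup_{H_i}\Spa(K,K^+),G)=\prod_{H_i}\Mor_K(\Spa(K,K^+),G)=\Map(H_i,G(K,K^+))$, the first equality because $\sqcup$ is a coproduct, the second by definition of $G(K,K^+)$ (using the map $X(K,K^+)\hookrightarrow|X|$ of the preceding definition to identify $\Spa(K,K^+)$-points with elements of $G(K,K^+)$); here every map out of a finite discrete set is automatically continuous. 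Then I would pass to the limit: on the left, $\Mor_K(\varprojlim_i\underline{H_i},G)=\varinjlim_i\Mor_K(\underline{H_i},G)$ because $\underline{H}$ is the limit in adic spaces and $G$ is fixed — this is exactly the universal property one gets from the $\sim$-limit presentation, i.e.\ any map from the affinoid $\underline{H}$ to $G$ factors through some $\underline{H_i}$ since $|\underline{H}|=|\Spa(K)|\times H$ by \cref{l:points-of-underlineH} and the relevant finite quotient structure; on the right, $\varinjlim_i\Map(H_i,G(K,K^+))=\Mapc(H,G(K,K^+))$ is the standard description of continuous maps from a profinite set to any topological space as those factoring through a finite quotient. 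Combining these identifications gives the first displayed equality, and functoriality in $H$ and $G$ is immediate from the functoriality of each step.

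For the group statement, suppose $H$ is a locally profinite group and $G$ an adic group with $G(K,K^+)$ a topological group. By \cref{d:profinite-sheaves} the functor $\underline{\phantom{-}}$ commutes with finite products, so $\underline{H}$ is a group object and a morphism $\underline{H}\to G$ of adic spaces is a group homomorphism iff the two composites $\underline{H}\times\underline{H}=\underline{H\times H}\rightrightarrows G$ agree; under the bijection of the first part (applied to $H$ and to $H\times H$, and using that $(G\times G)(K,K^+)=G(K,K^+)\times G(K,K^+)$, hence also the analogous statement for $G$ itself via the multiplication morphism), this condition translates exactly into the corresponding map $H\to G(K,K^+)$ being a group homomorphism. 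Hence $\Hom(\underline{H},G)$ is carved out of $\Mor_K(\underline{H},G)$ by the same equations as $\Hom_{\cts}(H,G(K,K^+))$ is carved out of $\Mapc(H,G(K,K^+))$, giving the second displayed equality.

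I expect the main obstacle to be the passage-to-the-limit step on the morphism side, i.e.\ proving $\Mor_K(\varprojlim_i\underline{H_i},G)=\varinjlim_i\Mor_K(\underline{H_i},G)$: one must check that an arbitrary morphism from the affinoid adic space $\underline{H}$ to $G$ — not just to an affinoid $G$ — factors through a finite level $\underline{H_i}$, which requires using the structure of $\underline{H}$ as $|\Spa(K)|\times H$ with $H$ totally disconnected and the fact that $G$ (locally of finite type, or at least an adic space) has enough structure that such a map is locally constant in the $H$-direction. The cleanest route is to cover $\underline{H}$ by clopen-in-$H$ pieces over which the map lands in a single affinoid (or quasicompact open) of $G$, use quasicompactness of $H$ to get a finite subcover, and then invoke \cite[Proposition~2.4.5]{ScholzeWeinstein} on each piece; everything else is formal bookkeeping with coproducts, finite products, and the definition of $G(K,K^+)$.
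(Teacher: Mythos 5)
There is a genuine gap, and it sits exactly where you flagged the ``main obstacle'': the passage to the limit is not merely hard, it is false. Neither of the two identifications you use there holds. First, $\varinjlim_i\Map(H_i,Y)$ computes the \emph{locally constant} maps $H\to Y$, and for a non-discrete topological space $Y$ this is strictly smaller than $\Mapc(H,Y)$; ``continuous maps from a profinite set factor through a finite quotient'' is only true for discrete targets. Second, and for the same reason, $\Mor_K(\varprojlim_i\underline{H_i},G)\neq\varinjlim_i\Mor_K(\underline{H_i},G)$ in general: a morphism $\underline{H}\to G$ need not be locally constant in the $H$-direction. A concrete counterexample is $H=\whZ$ and $G=\G_m$ with the morphism corresponding to $x\mapsto(1+p)^x$ (image in $1+p\O_K$, hence landing in a single affinoid annulus of $\G_m$, yet factoring through no finite quotient of $\whZ$). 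This is not a peripheral case — it is the entire point of the paper: \cref{l:hat-vs-p-divisible-groups} shows $\HOM(\whZ,\G_m)=\G_m^{\tt}$ contains the open disc $\wh\G_m$, whereas your colimit would only produce the roots of unity $\varinjlim\mu_N$; and \cref{l:coprime-to-p-HOM} is stated precisely because the colimit formula holds only for pro-order coprime to $p$. Your proposed fix (clopen cover, finite subcover, then \cite[Proposition~2.4.5]{ScholzeWeinstein} on each piece) only shows the map lands in finitely many quasicompact opens of $G$; the tilde-limit property gives density of $\varinjlim\Map(H_i,K)$ in $\Mapc(H,K)$, not that a map of affinoid algebras into $\Mapc(H,K)$ factors through that dense subalgebra. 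So your argument correctly produces the locally constant maps on both sides but never accounts for the genuinely continuous ones.

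The paper's proof avoids limits altogether: in the profinite, affinoid case it writes down the inverse bijection directly, sending a continuous $\varphi:H\to G(K,K^+)$ to the morphism of adic spaces induced by the $K^+$-algebra map $\O^+(G)\to\Mapc(H,K^+)$, $f\mapsto f\circ\varphi$ (each $f\in\O^+(G)$ being viewed as a map $G\to\B^1$, so that $f\circ\varphi$ is continuous and hence lies in $\O^+(\underline H)=\Mapc(H,K^+)$, not merely in the locally constant functions); one composite is the identity by a formal functoriality argument, and injectivity of the evaluation map is checked by restricting along $\underline{H'}\to\underline H$ for $H'$ the underlying discrete set. Your reduction to profinite $H$, your treatment of the finite/discrete case, and your deduction of the group-theoretic refinement from functoriality and compatibility with products are all fine and match the paper; it is only the limit step that needs to be replaced by a construction of this kind.
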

\begin{proof}
	The second part of the lemma will follow from the first by functoriality.
	
	For the first part, evaluation on $(K,K^+)$-points defines by Lemma~\ref{l:points-of-underlineH} a morphism
	\[ \Map(\underline{H},G)\to \Map_{\cts}(H,G(K,K^+)).\]
	Here continuity is clear as $G(K,K^+)$ has the subspace topology of $|G|$, and $H$ that of $|\underline{H}|$.
	
	The map is clearly functorial in $H$ and $G$.
	To see that it is a bijection,
	we can therefore by glueing reduce to the case that $H$ is profinite and $G$ is affinoid.
	
	Given a continuous map $\varphi:H\to G(K,K^+)$, there is an associated map of $K^+$-algebras
	\[ \O^+(G)\to \Map_{\cts}(H,K^+)\]
	that interprets $f\in \O^+(G)$ as a map $f:G\to \B^1:=\Spa(K\langle X\rangle,K^+\langle X\rangle)$ and sends it to
	\[H\xrightarrow{\varphi} G(K,K^+)\xrightarrow{f} \B^1(K,K^+)=K^+.\]
	This is clearly functorial in $H$ and $G$.
	We have thus defined natural morphisms
	\[ \Map_{\cts}(H,G(K,K^+)) \to \Map(\underline{H},G)\to \Map_{\cts}(H,G(K,K^+)).\]
	It is now formal from the functoriality that these compose to the identity: We can first reduce to the case $H=\{x\}$ and then to $G=\Spa(K,K^+)$, for which the statement is clear.

	It remains to prove that the second map in the above composition is injective. For this, consider the topological space  $H'$ whose underlying set is that of $H$, but endowed with the discrete topology. There is by functoriality  a natural map $\underline{H'}\to \underline{H}$, which on global sections
	\[\O^+(\underline{H})=\Map_{\cts}(H,K^+)\to \O^+(\underline{H'})=\Map(H',K^+)\]
	is injective. Consequently, so is the pullback $\Map(\underline{H},G)\to \Map(\underline{H},G)$ for affinoid $G$.
	We are therefore reduced to the case of discrete $H$, where the lemma is clear.
\end{proof}

\subsection{Topological torsion sheaves}\label{s:tt-sheaves}
From now on, let $K$ be a perfectoid field, and $K^+\subseteq K$ a ring of integral elements as before. We work in the category of $v$-sheaves on $\Perf_K$. For a locally profinite set $G$, to simplify notation, we will from now on also denote just by $G$ the pro-constant $v$-sheaf $\underline{G}$ from Definition~\ref{d:profinite-sheaves} when it is clear from the context what is meant.
\begin{Definition}\label{d:tt}
	Let $F$ be an abelian $v$-sheaf on $\Perf_K$.
	\begin{enumerate}
\item  Consider the internal Hom sheaf $\HOM(\whZ,F)$ where \[\whZ:=\underline{\whZ}=\varprojlim_{N\in\N} \underline{\Z/N\Z}\] is the profinite sheaf of Definition~\ref{d:profinite-sheaves}.
There is a  natural evaluation map at $1\in \whZ$:
\[ \mathrm{e}_F:\HOM(\whZ,F)\to F.\]
We define the \textbf{topological torsion subsheaf} of $F$ to be the abelian subsheaf
\[ F^\tt=\im(\mathrm{e}_F)\subseteq F.\]
\item We say that a $v$-sheaf $F$ is \textbf{topological torsion} if  $F^\tt\to F$ is an isomorphism.
\item We say that $F$ is \textbf{strongly topological torsion} if already $\mathrm e_F$ is  an isomorphism.
\item We say that $F$ is \textbf{topologically torsionfree} if $F^\tt=0$.
\end{enumerate}
\begin{Definition}
We make analogous definitions for the \textbf{topologically $p$-torsion subsheaf}
\[F\langle p^\infty\rangle:=\im\big(\HOM(\Z_p,F)\to F\big)\subseteq F\]
by replacing $\whZ$ with $\Z_p$. 
Via the natural projection $\whZ\twoheadrightarrow \Z_p$, we see that $F\langle p^\infty\rangle\subseteq F^{\tt}\subseteq F$.
In particular, if $F$ is topologically $p$-torsion, it is automatically topologically torsion.
\end{Definition}
\begin{Example}\label{ex:top-tor-in-G_m}
	As we will see in \cref{s:tt-of-rigid-groups}, for the $v$-sheaf represented by the rigid group $\G_m$, the sheaf $\G_m\langle p^\infty\rangle\subseteq \G_m$ is given by the open unit disc $\wh{\G}_m\subseteq \G_m$ of radius 1 around 1, and $\G_m^\tt$ is the open subgroup given by the translates by all roots of unity $\mu_N\subseteq \G_m$ for $N\in \N$.
\end{Example}

\begin{Remark}
	The construction of $F\langle p^\infty\rangle$ may be seen as an analogue for $v$-sheaves of a related construction by Fargues  \cite[\S1.6, \S2.4]{Fargues-groupes-analytiques} for rigid groups in characteristic $0$  which gives rise to analytic $p$-divisible groups in the sense of Fargues, cf \S\ref{s:tt-of-rigid-groups}. Hence our notation.
\end{Remark}
\end{Definition}

\begin{Lemma}\label{l:tt-left-exact}
	For any abelian $v$-sheaf $F$, the sheaf $F^\tt$ is itself topologically torsion.
	Sending $F\mapsto F^\tt$ thus defines a right-adjoint to the forgetful functor from topological torsion abelian $v$-sheaves to abelian $v$-sheaves.
	 The analogous statements hold for $F\tpt$.
\end{Lemma}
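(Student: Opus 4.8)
The plan is to verify the adjunction directly from the definitions, since the universal property is essentially built into the construction of $F^\tt$ as the image of evaluation at $1$. First I would establish the key formal fact: for an abelian $v$-sheaf $G$ that is \emph{strongly} topological torsion (i.e.\ $\mathrm e_G\colon \HOM(\whZ,G)\isomarrow G$), and any abelian $v$-sheaf $F$, precomposition with $\mathrm e_F$ should induce a bijection
\[
\Hom(G,F^\tt)\isomarrow \Hom(G,F).
\]
Granting this, the adjunction will follow once we know $F^\tt$ itself is topological torsion, because then, writing $U$ for the forgetful functor from topological torsion abelian $v$-sheaves and noting $G=UG$ for such $G$, we would get $\Hom(G,F^\tt)=\Hom(UG,F)$ naturally in $G$ and $F$.

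So the two things to prove are: (a) $F^\tt$ is topological torsion, and (b) the bijectivity statement above. For (a), the point is that the evaluation map $\mathrm e_{F^\tt}\colon \HOM(\whZ,F^\tt)\to F^\tt$ is surjective. Surjectivity can be checked $v$-locally: given a section $s$ of $F^\tt$ over some $T\in\Perf_K$, by definition of the sheaf-theoretic image there is a $v$-cover $T'\to T$ and a section $\varphi\in\HOM(\whZ,F)(T')$ with $\mathrm e_F(\varphi)=s|_{T'}$, i.e.\ $\varphi(1)=s|_{T'}$. I claim $\varphi$ factors through $F^\tt\subseteq F$: indeed for each $n\in\whZ$, the element $\varphi(n)$ equals $\mathrm e_F$ applied to the translate $\varphi(n\cdot -)$, hence lies in $F^\tt$; and since $\whZ$ generates a dense subsheaf... more precisely, the internal Hom $\HOM(\whZ,F)$ is the limit $\varprojlim_N\HOM(\underline{\Z/N\Z},F)=\varprojlim_N F[N]$, and one checks $F[N]\subseteq F^\tt$ because an $N$-torsion element $t$ is $\mathrm e_F$ of the map $\whZ\twoheadrightarrow\Z/N\Z\to F$ sending $1\mapsto t$; so any $\varphi$ automatically factors through $F^\tt$. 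Thus $\HOM(\whZ,F^\tt)=\HOM(\whZ,F)$ and $F^\tt$ is (even strongly) topological torsion — in fact this observation simplifies matters considerably.

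For (b), I would use that $\HOM(\whZ,-)$ is right adjoint to $-\otimes_{\uZ}\whZ$ in the appropriate sense, or more elementarily argue as follows. A morphism $f\colon G\to F$ with $G$ strongly topological torsion lands in $F^\tt$: pick the section realizing $G$ as $\HOM(\whZ,G)$, i.e.\ for any section $g$ of $G$ over $T$ there is (after a $v$-cover) a $\psi\in\HOM(\whZ,G)(T')$ with $\psi(1)=g$; then $f\circ\psi\in\HOM(\whZ,F)(T')$ has $\mathrm e_F(f\circ\psi)=f(g)$, so $f(g)\in F^\tt$. Hence $f$ factors (uniquely, as $F^\tt\hookrightarrow F$ is a monomorphism) through $F^\tt$, giving the required bijection $\Hom(G,F^\tt)=\Hom(G,F)$. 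Naturality in both variables is immediate from the construction.

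The same argument applies verbatim with $\whZ$ replaced by $\Z_p$, giving the statement for $F\tpt$; the only input used is that $\Z_p$ is a profinite abelian group with $\Z_p=\varprojlim_n\Z/p^n\Z$, so that $\HOM(\Z_p,F)=\varprojlim_n F[p^n]$ and every $p^n$-torsion element lies in the image of $\mathrm e_F$.

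The main obstacle I anticipate is bookkeeping around the sheaf-theoretic image: one must be careful that "$F^\tt$ is topological torsion" is not circular, and the clean way out is the identification $\HOM(\whZ,F)=\varprojlim_N F[N]$ together with the remark that $F[N]\subseteq F^\tt$, which makes the factorization $\HOM(\whZ,F)\to F^\tt$ automatic and hence shows $F^\tt$ is even strongly topological torsion. Everything else is a formal manipulation of the universal property of images in the topos of $v$-sheaves.
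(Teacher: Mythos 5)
Your core argument is the right one, and it is essentially the paper's: given $\varphi\in\HOM(\whZ,F)(T)$, every section $\varphi(n)$ in its image is itself the evaluation at $1$ of the translated homomorphism $m\mapsto\varphi(nm)$, so $\varphi$ factors through the subsheaf $F^\tt$; hence $\mathrm{e}_{F^\tt}$ is surjective, $(F^\tt)^\tt=F^\tt$, and the adjunction follows by the formal argument in your step (b) (which, note, only uses surjectivity of $\mathrm{e}_G$, so it applies to all topologically torsion $G$, as the lemma requires). This is already complete: no density consideration is needed, because a morphism into $F$ factors through a subsheaf if and only if every section in its image lies in that subsheaf, which the translate argument establishes directly.

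The ``more precise'' detour, however, is wrong and leads you to a false strengthening. The identification $\HOM(\whZ,F)=\varprojlim_N\HOM(\underline{\Z/N\Z},F)=\varprojlim_N F[N]$ does not hold: internal Hom out of an inverse limit of sources is not computed by the internal Homs out of the terms, and for example $F=\G_a$ is torsionfree while $\HOM(\whZ,\G_a)=\G_a\neq 0$ (the paper records $\G_a^{\tt}=\G_a$). Consequently your conclusion that $F^\tt$ is \emph{strongly} topologically torsion, i.e.\ that $\mathrm{e}_{F^\tt}\colon\HOM(\whZ,F^\tt)\to F^\tt$ is an isomorphism, is false: the paper's own example $F=\Z_p/\Z$ has $F\tpt=F$ but $\HOM(\Z_p,F)=\underline{\Q_p}\neq F$, so $F\tpt$ is topologically $p$-torsion without being strongly so; the paper likewise notes that $\HOM(\whZ,\HOM(\whZ,-))\to\HOM(\whZ,-)$ is not an equivalence. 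What the translate argument actually yields is only surjectivity of $\mathrm{e}_{F^\tt}$, which is all the lemma asserts and all the adjunction needs. You should delete the passage from ``more precisely'' onward, together with the corresponding claim in your closing paragraph, and let the translate argument stand as the proof.
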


\begin{proof}
	It suffices to show that the natural map $(F^{\tt})^\tt\to F^{\tt}$ is an isomorphism. It is then formal that this defines the unit of an adjunction with co-unit given by $F^\tt\to F$.
	
	It suffices to see that the map is surjective, for which it suffices to prove that the map
$\HOM(\whZ,F^{\tt})\to F^{\tt}$
	is surjective. For this it suffices to prove that the top map in the diagram
	\[\begin{tikzcd}
		{\HOM(\whZ,\HOM(\whZ,F))} \arrow[d] \arrow[r] & {\HOM(\whZ,F)} \arrow[d,two heads] \\
		{\HOM(\whZ,F^\tt)} \arrow[r] & F^{\tt}
	\end{tikzcd}
	\]
	is surjective. This holds because for any perfectoid space $T$ over $K$ and any $T$-linear homomorphism $\varphi:\whZ\times T\to F$, a preimage is given by the map $\whZ\times \whZ\times T\to F$ defined by $(a,b,x)\mapsto \varphi(ab,x)$.
	Thus $F^\tt$ is topologically torsion. The case of $F\langle p^\infty\rangle$ is analogous.
\end{proof}
\begin{Example}
	The reason why we need  both the notion of topological torsion and strongly topological torsion sheaves is that each has its advantages and pathologies:
	\begin{enumerate}
		\item For an example where $F\langle p^\infty\rangle \neq \HOM(\Z_p,F)$, consider the $v$-sheaf $F=\Z_p/\Z$ in the exact sequence
		\[ 0\to \Z\to \Z_p\to\Z_p/\Z\to 0.\]
		One can show that $\HOM(\Z_p,F)=\underline{\Q_p}$ \cite[\S2.3]{heuer-isoclasses}, but nevertheless $F\langle p^\infty\rangle=F$.
		This also shows that the category of topologically torsion sheaves is not abelian, that subsheaves of topological torsion sheaves need not be topological torsion, and that in contrast to $\HOM(\Z_p,-)$, the functor $-\langle p^\infty\rangle$ is neither left-exact nor right-exact.
		\item On the other hand, the analogue of Lemma~\ref{l:tt-left-exact} does not hold for $\HOM(\whZ,-)$, namely
		\[ \HOM(\whZ,\HOM(\whZ,-))\to \Hom(\whZ,-)\]
		is not an equivalence: Indeed, by the Yoneda Lemma and the tensor-hom adjunction this follows from the fact that $\whZ\to \whZ\otimes_{\Z}\whZ$ is not an isomorphism of $v$-sheaves.
	\end{enumerate}
 \end{Example}
We will see in \cref{s:tt-of-rigid-groups} below that such pathologies cannot occur for $v$-sheaves represented by rigid spaces. But before, we first prove some useful lemmas that hold in greater generality:

\begin{Lemma}\label{l:coker-Ftt->F-torsionfree}
	\begin{enumerate}
	\item The cokernel of $F^\tt\to F$ is torsionfree. 
	\item The cokernel of $F\tpt\to F$ is $p$-torsionfree.
	\end{enumerate}
\end{Lemma}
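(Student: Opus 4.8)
The plan is to prove both statements simultaneously by showing that any torsion element in the cokernel must already lie in the image of the topological torsion subsheaf, exploiting that $\whZ$ (respectively $\Z_p$) surjects onto every finite cyclic group (respectively every finite cyclic $p$-group). The key point is that a section of $F$ whose image in $F/F^\tt$ is killed by $N$ is, after passing to a $v$-cover, an $N$-torsion point of $F$ up to an element of $F^\tt$ — and $N$-torsion points of $F$ are themselves topologically torsion.

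Concretely, I would argue as follows for (i). Let $T \in \Perf_K$ and let $\bar{s}$ be a section of $\coker(F^\tt \to F)$ over $T$ with $N\bar{s} = 0$. After replacing $T$ by a $v$-cover we may lift $\bar{s}$ to $s \in F(T)$; then $Ns \in F^\tt(T)$, again after a further $v$-cover by definition of the sheaf-theoretic image, i.e.\ there is $\varphi \in \HOM(\whZ, F)(T)$ with $\mathrm e_F(\varphi) = \varphi(1) = Ns$. Now consider the homomorphism $\psi\colon \whZ \times T \to F$ defined on the copy $\whZ$ by $a \mapsto \varphi(a) - a s$; this makes sense because multiplication by $a \in \whZ$ is defined on $F$ (as $\whZ$ acts on any abelian $v$-sheaf through its quotients $\Z/N\Z$ only after checking compatibility — more cleanly, use that $\varphi$ restricted to $N\whZ \cong \whZ$ together with the relation $\varphi(N) = Ns$ forces $\varphi(a) - as$ to factor through $\whZ / N\whZ \cong \Z/N\Z$). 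Thus $\varphi(1) - s = Ns' $ for a suitable construction, or more directly: the element $s$ differs from $\varphi(1) \in F^\tt$ by an element of the subsheaf $F[N]$ of $N$-torsion sections, after the relevant covers. Since $F[N]$ is itself topologically torsion — indeed $F[N] = \HOM(\underline{\Z/N\Z}, F)$ receives a surjection from $\HOM(\whZ, F)$ via the quotient $\whZ \twoheadrightarrow \Z/N\Z$, so $F[N] = F[N]^\tt \subseteq F^\tt$ — we conclude $s \in F^\tt(T)$ after a $v$-cover, hence $\bar s = 0$. This proves $\coker(F^\tt \to F)$ is torsionfree.

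For (ii) the argument is identical with $\whZ$ replaced by $\Z_p$: if $\bar s$ is killed by $p^k$, lift to $s$, write $p^k s = \varphi(1)$ with $\varphi \in \HOM(\Z_p, F)$, observe $p^k s \in F\tpt$ and that $F[p^k] = \HOM(\underline{\Z/p^k\Z}, F)$ is a quotient target of $\HOM(\Z_p, F)$ via $\Z_p \twoheadrightarrow \Z/p^k\Z$, hence $F[p^k] \subseteq F\tpt$; so $s \in F\tpt(T)$ after a cover and $\bar s = 0$.

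The main obstacle I anticipate is making the step "$s$ agrees with $\varphi(1)$ modulo $F[N]$" fully rigorous at the level of $v$-sheaves: one must produce, from $\varphi\colon \whZ \times T \to F$ with $\varphi(1)=Ns$ after covers, a genuine section of $F[N]$ witnessing $s - \varphi(1)' \in F^\tt$. The cleanest route is probably to avoid subtraction tricks and instead use directly that the sheaf-theoretic image $F^\tt$ contains all $N$-torsion: build the auxiliary map $\whZ \times T \to F$, $a \mapsto \varphi(a) + (1-a)s$ — note $a \mapsto (1-a)s$ is the composite of $\whZ \to \whZ$, $a\mapsto 1-a$ with the "action on $s$" map $\whZ \to F$, $b \mapsto b s$, the latter being well-defined since $b \mapsto bs$ factors through some $\Z/M\Z$ locally... — actually the honest fix is: $s$ itself defines a map $\whZ \to F$ only after knowing $s$ is already suitably torsion, which is circular. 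So the right move is the one in the second paragraph: show $\varphi$ restricted to the open-and-closed $N\whZ$ gives $\varphi|_{N\whZ}(N) = Ns$, deduce the map $N\whZ \to F$, $Na \mapsto \varphi(Na) - Nas$ is trivial (both sides agree on the topological generator $N$ and are $\whZ$-linear in $a$), hence $\varphi(Na) = Nas$ for all $a$, hence $s = \varphi(1) - \varphi(1-N\cdot(\text{something}))$... Getting this bookkeeping exactly right — equivalently, exhibiting the short exact sequence $0 \to F[N] \to \HOM(\whZ,F) \xrightarrow{N,\,\mathrm{ev}_1} F$ is the crux, and once it is in place both parts follow formally.
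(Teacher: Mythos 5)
There is a genuine gap, and it sits exactly where you locate it yourself: the passage from ``$Ns\in F^{\tt}$, witnessed by $\varphi$ with $\varphi(1)=Ns$'' to ``$s\in F^{\tt}$''. The reduction to this claim and the observation $F[N]=\HOM(\underline{\Z/N\Z},F)\subseteq F^{\tt}$ are both fine, but every bridge you propose between them is broken. First, since $\varphi(1)=Ns$ you have $\varphi(N)=N\varphi(1)=N^{2}s$, not $Ns$, so the repeated assertion ``$\varphi|_{N\whZ}(N)=Ns$'' is false; likewise $s-\varphi(1)=(1-N)s$ is not an $N$-torsion section, so ``$s$ differs from $\varphi(1)$ by an element of $F[N]$'' fails. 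Second, the expression ``$as$'' for $a\in\whZ$ is undefined for a general abelian $v$-sheaf, as you concede. Third, the proposed crux sequence $0\to F[N]\to\HOM(\whZ,F)\xrightarrow{\varphi\mapsto\varphi(N)}F$ is not exact: the kernel is $\HOM(\whZ/N\underline{\Z},F)$, which contains $\HOM(\whZ/\Z,F)=\ker(\mathrm e_F)$ as a direct summand and is in general strictly larger than $F[N]$, because a homomorphism out of $\whZ$ is \emph{not} determined by its values on $\Z$ ($\underline{\Z}\to\whZ$ is not an epimorphism of $v$-sheaves). So no complete argument is on the table.

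The missing step can be closed in the spirit of your attempt by a gluing argument rather than a subtraction argument: the sequence of $v$-sheaves
\[0\to\underline{\Z}\xrightarrow{\,k\mapsto(Nk,-k)\,}\underline{\Z}\oplus\whZ\xrightarrow{\,(m,a)\mapsto m+Na\,}\whZ\to0\]
is exact (surjectivity holds $v$-locally since $\whZ=\bigsqcup_{m=0}^{N-1}(m+N\whZ)$), and applying $\HOM(-,F)$ identifies a homomorphism $\whZ\times T\to F$ sending $1\mapsto s$ with the datum of a homomorphism $\psi\colon\whZ\times T\to F$ satisfying $\psi(1)=Ns$. Taking $\psi=\varphi$ produces $s\in F^{\tt}$ directly, with no detour through $F[N]$; this is the same trick the paper uses at the end of the proof of \cref{l:hat-vs-p-divisible-groups}(1). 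The paper's actual proof of the present lemma is different and shorter: applying $\HOM(-,F)$ to $0\to\Z\to\whZ\to\whZ/\Z\to0$ gives $\HOM(\whZ,F)\to F\to\EXT^1_v(\whZ/\Z,F)$, so $\coker(F^{\tt}\to F)$ embeds into $\EXT^1_v(\whZ/\Z,F)$, which is uniquely divisible (hence torsionfree) because $\whZ/\Z$ is; the same argument with $\Z_p/\Z$ gives part (ii). Either route would repair your proof.
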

\begin{proof}
	The $v$-sheaf $\whZ/\Z$ is uniquely divisible by a Snake Lemma argument. Thus also $\EXT^1_v(\whZ/\Z,F)$ is uniquely divisible. The statement follows since we have an exact sequence
	\[ \HOM(\whZ,F)\to \HOM(\Z,F)\to \EXT^1_v(\whZ/\Z,F).\]
	Similarly for the $v$-sheaf $\Z_p/\Z$, which is uniquely $p$-divisible.
\end{proof}

\begin{Lemma}\label{l:preserve-inj}
	The functors $\HOM(\whZ,-)$ and $\HOM(\Z_p,-)$ preserve injectives in the category of abelian sheaves on $\Perf_{K,v}$.
\end{Lemma}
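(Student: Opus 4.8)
The statement asserts that $\HOM(\whZ,-)$ and $\HOM(\Z_p,-)$ preserve injectives in the category of abelian $v$-sheaves. The standard categorical principle at work here is: \emph{if $T$ is a functor possessing an exact left adjoint, then $T$ preserves injectives}. So the plan is to exhibit $\HOM(\whZ,-)$ and $\HOM(\Z_p,-)$ as right adjoints with exact left adjoints. By the tensor–hom adjunction in the category of abelian $v$-sheaves on $\Perf_{K,v}$, the functor $\HOM(\whZ,-)$ is right adjoint to $-\otimes_{\Z}\whZ$, and likewise $\HOM(\Z_p,-)$ is right adjoint to $-\otimes_{\Z}\Z_p$. (Here $\otimes_{\Z}$ denotes the $v$-sheafification of the presheaf tensor product over the constant sheaf $\uZ$, and the adjunction holds by abstract nonsense about closed symmetric monoidal structures on sheaf categories, or can be checked directly on representables via Yoneda.) Hence it remains to show that $-\otimes_{\Z}\whZ$ and $-\otimes_{\Z}\Z_p$ are exact functors on abelian $v$-sheaves.

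For exactness, I would argue as follows. Write $\whZ = \varprojlim_N \underline{\Z/N\Z}$ and $\Z_p = \varprojlim_n \underline{\Z/p^n\Z}$ as pro-constant $v$-sheaves. The key point is that for a constant sheaf $\underline{A}$ associated to an abelian group $A$, the functor $-\otimes_{\Z}\underline{A}$ agrees with the $v$-sheafification of $F\mapsto (F\otimes_{\Z} A)$ computed sectionwise, and this is exact precisely when $A$ is flat as a $\Z$-module (sheafification is exact, and sectionwise tensoring with a flat module is exact). Now $\Z_p$ is $\Z$-flat, and more relevantly $\whZ = \prod_\ell \Z_\ell$ is $\Z$-flat as well — it is torsionfree, hence flat over the PID $\Z$. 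The subtlety is that $\whZ$ and $\Z_p$ are only \emph{pro}-constant, not constant, so I need to commute the tensor product with the cofiltered limit. This is where I expect the main obstacle to lie: tensor products do not commute with arbitrary limits in general. The resolution is that the transition maps $\underline{\Z/N'\Z}\to\underline{\Z/N\Z}$ (for $N\mid N'$) are surjective with the systems being Mittag-Leffler, and — crucially — for perfectoid test objects $T$ the cohomology of $\underline{\Z/N\Z}$ on $T$ is well-behaved, so one can pass to the limit. More cleanly: I would instead verify exactness by testing on a short exact sequence $0\to F'\to F\to F''\to 0$ of $v$-sheaves, sheafify everything, and check stalks; the stalks of $\whZ$ at a geometric/perfectoid point are just the constant group $\whZ$, which is $\Z$-flat, so exactness of $-\otimes_{\Z}\whZ$ follows stalkwise. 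The same argument handles $\Z_p$, whose stalks are $\Z_p$, again $\Z$-flat.

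The main obstacle, to reiterate, is making the "stalkwise" or "pointwise" reduction rigorous in the $v$-topology on $\Perf_K$: one must know that the $v$-topos has enough points (or a conservative family of fiber functors — e.g. given by geometric points / strictly totally disconnected perfectoid spaces, cf. \cite{etale-cohomology-of-diamonds}) so that exactness can be detected after taking stalks, and that the stalk functor is monoidal, i.e. commutes with $-\otimes_{\Z}-$ and carries the pro-constant sheaf $\whZ$ to the abstract group $\whZ$. Granting this standard input, the argument is then: stalk of $-\otimes_\Z \whZ$ is $(-)_{x}\otimes_\Z \whZ$, which is exact since $\whZ$ is a flat $\Z$-module; therefore $-\otimes_\Z\whZ$ is exact; therefore its right adjoint $\HOM(\whZ,-)$ preserves injectives. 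Verbatim for $\Z_p$. I would present this compactly, citing the tensor–hom adjunction and the flatness of $\whZ$ and $\Z_p$ over $\Z$ as the two substantive inputs.
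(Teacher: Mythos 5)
Your proposal is correct and follows essentially the same route as the paper: tensor--hom adjunction exhibits $\HOM(\whZ,-)$ as right adjoint to $-\otimes_{\Z}\whZ$, and flatness of $\whZ$ (and $\Z_p$) over $\Z$, deduced from torsionfreeness, gives that this left adjoint preserves monomorphisms, hence the right adjoint preserves injectives. The only difference is that your detour through stalks and enough points is unnecessary: since $\whZ(T)=\Map_{\cts}(|T|,\whZ)$ is a torsionfree group for every $T$, the presheaf tensor product already preserves the injection sectionwise, and sheafification preserves monomorphisms.
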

\begin{proof}
	Let $F\to G$ be an injection of abelian $v$-sheaves and let $Q$ be an injective $v$-sheaf. Then $F\otimes_{\Z} \whZ\to G\otimes_{\Z} \whZ$ is injective as $\whZ$ is a torsionfree $v$-sheaf and thus flat over $\Z$. Hence
	\[\begin{tikzcd}[row sep =0.2cm]
		{\Hom(G,\HOM(\whZ,Q))} \arrow[r] \arrow[d,equal] & {\Hom(F,\HOM(\whZ,Q))} \arrow[d,equal] \\
		{\Hom(\whZ\otimes_{\Z} G,Q)} \arrow[r] & {\Hom(\whZ\otimes_{\Z} F,Q)}
	\end{tikzcd}\]
	is surjective since $Q$ is injective.
\end{proof}

\begin{Lemma}\label{l:cohomology-of-stt-is-tt}
	Assume that $F$ is strongly topologically ($p$-)torsion. Let $\pi:X\to \Spa(K,K^+)$ be a locally spatial diamond. Then $R^1\pi_{\ast\tau}F$ is topologically ($p$-)torsion for $\tau\in \{\et,v\}$.
\end{Lemma}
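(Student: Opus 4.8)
The plan is to exploit the defining isomorphism $\mathrm e_F\colon \HOM(\whZ,F)\isomarrow F$ (resp.\ $\HOM(\Z_p,F)\isomarrow F$) and push it through the higher direct image functor. The key point is that $R^1\pi_{*\tau}$ commutes with $\HOM(\whZ,-)$ in the relevant sense: since $\whZ$ is a profinite $v$-sheaf, $\HOM(\whZ,-)=\lim_{N}\HOM(\underline{\Z/N\Z},-)$, and $\HOM(\underline{\Z/N\Z},G)$ is computed by a finite limit (a Čech-type complex for the trivial $\Z/N\Z$-torsor), so it commutes with $\pi_{*\tau}$ and its derived functors up to the usual $\Rlim$ bookkeeping. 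First I would record that $\HOM(\whZ,-)$ is left exact and, by Lemma~\ref{l:preserve-inj}, preserves injectives; hence for an injective resolution $F\to I^\bullet$ the complex $\HOM(\whZ,I^\bullet)$ is an injective resolution of $\HOM(\whZ,F)$, and applying $\pi_{*\tau}$ yields a Grothendieck spectral sequence / base change comparison relating $R^i\pi_{*\tau}\HOM(\whZ,F)$ to $\HOM(\whZ,R^j\pi_{*\tau}F)$.

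Concretely I would argue as follows. Apply $R\pi_{*\tau}$ to the isomorphism $\HOM(\whZ,F)\simeq F$ to get $R\pi_{*\tau}F\simeq R\pi_{*\tau}\HOM(\whZ,F)$. Now resolve: writing $\whZ=\lim_N \underline{\Z/N\Z}$ and using that each $\underline{\Z/N\Z}$ is a finite constant sheaf, the internal Hom out of $\underline{\Z/N\Z}$ is a finite limit, so it is preserved by $R\pi_{*\tau}$; passing to the limit over $N$ (which is filtered, with surjective transition maps, so the $\Rlim$ contributes nothing problematic in degree $\le 1$ on the relevant sheaves — and in any case the resulting class still lies in the image of the appropriate evaluation map) gives a natural map
\[
\HOM(\whZ, R^1\pi_{*\tau}F)\to R^1\pi_{*\tau}\HOM(\whZ,F)\isomarrow R^1\pi_{*\tau}F,
\]
and one checks that the composite is the evaluation map $\mathrm e_{R^1\pi_{*\tau}F}$ at $1\in\whZ$, using the compatibility of the evaluation maps with $R\pi_{*\tau}$ (naturality in $\whZ$ applied to the map $\underline{\Z}\to\whZ$, $1\mapsto 1$). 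Since the source surjects onto $R^1\pi_{*\tau}F$ through this map, we conclude $(R^1\pi_{*\tau}F)^\tt = R^1\pi_{*\tau}F$, i.e.\ $R^1\pi_{*\tau}F$ is topologically torsion. The $p$-torsion case is identical, replacing $\whZ$ by $\Z_p=\lim_n \underline{\Z/p^n\Z}$ throughout.

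The main obstacle I anticipate is the interchange of $R^1\pi_{*\tau}$ with the inverse limit defining $\HOM(\whZ,-)$: one must be careful that the Milnor sequence $0\to \Rlim^1_N R^0\pi_{*\tau}\HOM(\underline{\Z/N\Z},F)\to R^1\pi_{*\tau}\HOM(\whZ,F)\to \lim_N R^1\pi_{*\tau}\HOM(\underline{\Z/N\Z},F)\to 0$ does not obstruct lifting a given class of $R^1\pi_{*\tau}F$ into $\HOM(\whZ,R^1\pi_{*\tau}F)$. The cleanest way around this is probably to avoid resolving $\whZ$ and instead work directly: start from a class $\alpha\in R^1\pi_{*\tau}F$, use $F\simeq \HOM(\whZ,F)$ to write $\alpha$ as coming from $R^1\pi_{*\tau}\HOM(\whZ,F)$, and then use the $\whZ$-action on $\HOM(\whZ,F)$ by translation (multiplication on the source) — which is functorial and hence acts on $R^1\pi_{*\tau}\HOM(\whZ,F)\simeq R^1\pi_{*\tau}F$ — to exhibit $\alpha$ explicitly in the image of the evaluation map, exactly as in the proof of Lemma~\ref{l:tt-left-exact} where the preimage of $\varphi$ was built from $(a,b,x)\mapsto\varphi(ab,x)$. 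This sidesteps all $\Rlim$ issues and reduces the lemma to the functoriality of $R^1\pi_{*\tau}$ together with the strong topological torsion hypothesis on $F$.
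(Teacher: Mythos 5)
Your overall strategy is the paper's: use $\HOM(\whZ,F)\simeq F$, interchange $\HOM(\whZ,-)$ with $R^1\pi_{\ast}$ via \cref{l:preserve-inj} and Grothendieck spectral sequences, and conclude by compatibility with evaluation. However, your comparison map points the wrong way, and this breaks the final step. The canonical map produced by this machinery goes
\[ R^1\pi_{\ast}\HOM(\whZ,F)\longrightarrow \HOM(\whZ,R^1\pi_{\ast}F),\]
obtained by combining the two edge maps attached to the two factorisations of the \emph{single} composite functor $\pi_{\ast}\HOM(\whZ,-)=\HOM(\whZ,\pi_{\ast}-)$ --- this underived identity is the ingredient your sketch never states. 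There is no evident natural map in the opposite direction $\HOM(\whZ,R^1\pi_{\ast}F)\to R^1\pi_{\ast}\HOM(\whZ,F)$ as you display, and your concluding inference (``the composite is the evaluation map, and the source surjects onto $R^1\pi_{\ast}F$ through this map'') asserts exactly what is to be proved, so the argument as written is circular. With the correct direction the proof closes at once: since $\mathrm{e}_F$ is an isomorphism, the map above reads $R^1\pi_{\ast}F\to\HOM(\whZ,R^1\pi_{\ast}F)$, and naturality in the first variable (applied to $\underline{\Z}\to\whZ$, $1\mapsto 1$) shows it is a \emph{section} of $\mathrm{e}_{R^1\pi_{\ast}F}$; hence evaluation is split surjective and $R^1\pi_{\ast}F$ is topologically torsion.

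Your proposed ``clean'' workaround via the translation action of $\whZ$ on $\HOM(\whZ,F)$ does not sidestep the difficulty: to produce a class in $\HOM(\whZ,R^1\pi_{\ast}F)$ you must at some point move an $\HOM(\whZ,-)$ from inside $R^1\pi_{\ast}$ to outside, which is precisely the base-change map above; functoriality of $R^1\pi_{\ast}$ alone only yields classes in groups of the form $R^1\pi_{\ast}\HOM(\whZ,G)$. (Once the base-change map is available, the translation trick of \cref{l:tt-left-exact} is not needed here at all.) Likewise the $\Rlim$/Milnor-sequence analysis of $\whZ=\varprojlim_N\underline{\Z/N\Z}$ is unnecessary, since the edge-map argument works with $\whZ$ directly. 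A minor further point: $\HOM(\whZ,I^\bullet)$ is a complex of injectives but need not be a resolution of $\HOM(\whZ,F)$, as $\HOM(\whZ,-)$ is only left exact; the edge maps exist regardless.
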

\begin{proof}
	There is a natural equivalence of functors on $v$-sheaves
	$\pi_{\ast}\HOM(\whZ,-)=\HOM(\whZ,\pi_{\ast}-)$.
	By \cref{l:preserve-inj} this induces Grothendieck spectral sequences which yield a natural map
	\[ R^1\pi_{\ast}F=R^1\pi_{\ast}\HOM(\whZ,F)\to R^1(\pi_{\ast}\HOM(\whZ,-))(F)\to  \HOM(\whZ,R^1\pi_{\ast}F).\]
	Since the above equivalence is also natural in the first entry $\whZ$, it is compatible with the evaluation map, which means that the above map is a splitting of the evaluation map
	\[\HOM(\whZ,R^1\pi_{\ast}F)\to R^1\pi_{\ast}F.\]
	In particular, the latter map is surjective. The same argument works for $\Z_p$.
\end{proof}

\subsection{Topologically torsion subgroups of rigid groups}\label{s:tt-of-rigid-groups}
Assume from now on that $K$ is a perfectoid field extension of $\Q_p$, and that the $v$-sheaf $F$ is represented by a rigid group $G$. We recall that by our conventions, this means a group object in the category of adic space of locally finite presentation over $\Spa(K,K^+)$. In this case, the topologically torsion subgroup $G\langle p^\infty\rangle$ can be described explicitly, and is often represented by an analytic \mbox{$p$-divisible} subgroup in the sense of Fargues: Namely, we prove the following extension of a result of Fargues \cite[Th\'eor\`eme~1.2]{Fargues-groupes-analytiques} which shows that our notion of topological torsion subsheaves generalises his notion of $p$-divisible analytic subgroups from rigid groups (in the sense of Tate) to general $v$-sheaves:
\begin{Proposition}\label{l:hat-vs-p-divisible-groups}
 Let $G$ be a rigid group over $(K,K^+)$. Then:
		\begin{enumerate}
		\item  We have
		$G\tpt=\HOM(\Z_p,G)\subseteq G$, and this subsheaf is represented by a rigid open subgroup that is strongly topologically torsion. Its $K$-points are characterised by
		\[ \HOM(\Z_p,G)(K,K^+)=\{x\in G(K,K^+)\mid p^n\cdot x\xrightarrow{n\to \infty} 0\}.\]
		\item This open subgroup fits into a left-exact sequence of $v$-sheaves
		\[0\to G[p^\infty]\to G\tpt\xrightarrow{\log}\mathrm{Lie}(G)\otimes_K \G_a.\]
		If $[p]:G\to G$ is surjective, this sequence is also right-exact.
		\item We have $G^{\tt}=\HOM(\whZ,G)$ and this is represented by the open subgroup of $G$ 
		\[G^{\tt}=\bigcup_{(N,p)=1} [N]^{-1}(G\langle p^\infty\rangle).\]
		If $K$ is algebraically closed, we can more explicitly describe this as the union
		\[G^{\tt}=G\langle p^\infty\rangle \cdot \bigcup_{(N,p)=1} G[N](K,K^+)\subseteq G.\]
		\item Assume that $G$ has good reduction, i.e.\ there is a formal group scheme $\mathfrak G$ that is flat of topologically finite presentation  over $K^+$ such that $\mathfrak G^{\ad}_{\eta}=G$. 
		Let $\mathfrak G[p^\infty]$ be the sheaf on $p$-nilpotent $K^+$-algebras $R$  defined by 
		$\mathfrak G[p^\infty](R)=\textstyle\varinjlim_{n\in \N} \mathfrak G[p^n](R)$.
		Then 
		\[G\langle p^\infty\rangle=\mathfrak G[p^\infty]^{\ad}_{\eta}\] is the adic generic fibre in the sense of Scholze--Weinstein \cite[\S2.2]{ScholzeWeinstein}. In particular, $G\langle p^\infty\rangle(K,K^+)$ (respectively, $G^\tt(K,K^+)$) consists of those points of $\mathfrak G(K^+)$ which reduce to a $p$-torsion point (respectively, torsion point) on the special fibre.
	\end{enumerate}
\end{Proposition}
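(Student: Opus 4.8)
The proof has four parts that build on each other, so I would treat them in the order (1), (2), (3), (4).

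For (1), the plan is to use \cref{l:underline-is-left-adjoint}: applied with $H=\Z_p$, it identifies $\HOM(\Z_p,G)(K,K^+)$ with $\Hom_{\cts}(\Z_p,G(K,K^+))$, and a continuous homomorphism out of $\Z_p$ is the same as an element $x=\varphi(1)$ of $G(K,K^+)$ such that $p^n x\to 0$. This gives the stated description of $K$-points \emph{provided} $\HOM(\Z_p,G)=G\langle p^\infty\rangle$, i.e. provided the evaluation map $\mathrm e_G$ is already injective (so that strong topological torsion holds and the image equals the source). Injectivity of $\mathrm e_G$ I would prove by a reduction to a neighbourhood of the identity: since $G$ is a rigid group, there is an open disc neighbourhood $U$ of $1$ on which the formal group law converges, and I would first show that $\HOM(\Z_p,G)$ is represented by an open subgroup by exhibiting it locally. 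Concretely, one shows that any $\Z_p$-linear map $T\times \Z_p\to G$ factors, after a $v$-cover of $T$, through such a $U$ because $p^nx\to 0$; then on $U$ the exponential/logarithm machinery (or Fargues' construction, which is exactly \cite[Th\'eor\`eme~1.2]{Fargues-groupes-analytiques} for rigid groups in the sense of Tate) identifies the $\Z_p$-module structure with an honest rigid subgroup. So the core of (1) is to extend Fargues' result from Tate-rigid groups to our adic setting and to check it computes $\HOM(\Z_p,-)$ on the nose; the key input is that $\HOM(\Z_p,G)$ only sees an infinitesimal-at-identity neighbourhood.

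For (2), once (1) identifies $G\langle p^\infty\rangle$ with an open rigid subgroup, I would define $\log$ on it by the convergent $p$-adic logarithm series — this converges precisely on the topologically $p$-torsion locus, which is the whole of $G\langle p^\infty\rangle$ by the characterisation in (1) — landing in $\mathrm{Lie}(G)\otimes_K\G_a$. Its kernel as a $v$-sheaf is the locus where every iterate of $[p]$ stays torsion, i.e. $G[p^\infty]$; left-exactness is then formal. For right-exactness when $[p]$ is surjective: $\log$ is a local isomorphism near $0$ (inverse given by $\exp$), so its image is an open subgroup of $\mathrm{Lie}(G)\otimes_K\G_a$, and surjectivity of $[p]$ lets one dilate by $p^{-n}$ to cover everything — this is the standard argument that $\log$ is surjective onto the Lie algebra when multiplication by $p$ is surjective. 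For (3), the identity $G^\tt=\HOM(\whZ,G)$ follows from $\whZ=\Z_p\times\prod_{\ell\neq p}\Z_\ell$ together with \cref{l:underline-is-left-adjoint} again, since continuous homs out of the prime-to-$p$ part factor through finite quotients; this gives $G^\tt=\bigcup_{(N,p)=1}[N]^{-1}(G\langle p^\infty\rangle)$ directly, and over an algebraically closed $K$ the $[N]$-torsion is finite étale and one can choose a section of $[N]$ over each point, yielding the product decomposition $G\langle p^\infty\rangle\cdot\bigcup_{(N,p)=1}G[N](K,K^+)$.

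For (4), the good-reduction case, I would compare two generic-fibre constructions. On the formal side, $\mathfrak G[p^\infty]$ is an ind-(finite flat) formal scheme over $K^+$, so its adic generic fibre $\mathfrak G[p^\infty]^{\ad}_\eta$ in the sense of \cite[\S2.2]{ScholzeWeinstein} makes sense; on the other side we have the open rigid subgroup $G\langle p^\infty\rangle\subseteq G=\mathfrak G^{\ad}_\eta$ from (1). I would identify them by checking that a point $x\in\mathfrak G(K^+)=G(K,K^+)$ lies in $G\langle p^\infty\rangle$ iff $p^nx\to 0$ iff the reduction $\bar x$ is $p$-power torsion in $\mathfrak G_s(k)$: the forward direction is clear since reduction is a group homomorphism continuous for the respective topologies, and the reverse direction uses that the kernel of reduction is the formal-group part, on which multiplication by $p$ is topologically nilpotent, so if $\bar x$ is killed by $p^m$ then $p^m x$ lies in that kernel and $p^n(p^m x)\to 0$. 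This matching of $K^+$-points, together with the fact that both sides are open subgroups of $G$ whose $K^+$-points determine them (specialising over all affinoid perfectoid test objects via the same argument), gives the isomorphism; the torsion statement for $G^\tt$ follows by combining this with (3).

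**Main obstacle.** I expect the technical heart to be part (1): proving that the $v$-sheaf $\HOM(\Z_p,G)$ is \emph{representable} by an open rigid subgroup and that the evaluation map is an isomorphism (not merely that its image is one). This requires carefully globalising Fargues' analytic-$p$-divisible-group construction, which was written for rigid spaces in the sense of Tate, to the adic / $v$-sheaf framework, and checking that the universal property of $\HOM(\Z_p,-)$ is exactly captured by the resulting subgroup after a $v$-localisation that pushes an arbitrary $\Z_p$-action into a neighbourhood of the identity where the logarithm converges. Everything else — (2), (3), (4) — is then a fairly mechanical consequence of (1) plus the adjunction \cref{l:underline-is-left-adjoint} and standard $p$-adic Lie theory.
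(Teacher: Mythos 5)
Your overall architecture matches the paper's: describe $(K,K^+)$-points via the adjunction of \cref{l:underline-is-left-adjoint}, represent $G\tpt$ by an open subgroup built from a good-reduction neighbourhood of the identity in the spirit of Fargues, split off the prime-to-$p$ part of $\whZ$ for (3), and use the formal model for (4). However, two steps as you state them would fail.

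First, in (1) you claim that any homomorphism $\uZp\times T\to G$ ``factors through'' a small open neighbourhood $U$ of the identity on which the logarithm converges. It does not: only its restriction to $p^n\uZp\times T$ does for $n\gg 0$, while the image of $1$ can lie anywhere in $\bigcup_n [p]^{-n}(U)$, and it is this union (not $U$) that represents $G\tpt$. Consequently, proving that every $T$-point of $[p]^{-n}(U)$ actually is topologically $p$-torsion --- the surjectivity half of $\mathrm e_G$ being an isomorphism onto this open subgroup --- requires an extension argument your plan omits: from a point $x$ with $p^nx\in U(T)$ one first produces a homomorphism on $p^n\uZp\times T$ hitting $p^nx$ (the paper does this via a formal model and the Scholze--Weinstein description of $\mathfrak G[p^\infty]^{\ad}_\eta$ on perfectoid test rings), and then extends it to $\uZp\times T$ with $1\mapsto x$ using the exact sequence $0\to p^n\uZ\to p^n\uZp\times\uZ\to\uZp\to 0$. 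Relatedly, the ``conversely'' direction of your $K$-point description (extending $\Z\to G(K,K^+)$ to $\Z_p$) needs that $G(K,K^+)$ is a complete topological group, which is not automatic for adic groups and is a separate lemma in the paper.

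Second, in (4) you propose to conclude by matching $(K,K^+)$-points of two open subgroups of $G$, on the grounds that these ``determine them''. This is precisely the failure mode the paper highlights elsewhere: $A^{\tt}(\C_p)=A(\C_p)$ for an abeloid variety $A$, yet $A^{\tt}\subsetneq A$ as adic spaces. Open subgroups of a rigid group are not determined by their classical points, so the identification $G\tpt=\mathfrak G[p^\infty]^{\ad}_\eta$ must be carried out on arbitrary affinoid perfectoid $T$; this is where the formal-model computation does its real work (and it is also what yields the key identity $[p]^{-1}(G_0)=G_0$ in the good-reduction case, which the paper uses to see that every homomorphism $\uZp\times T\to G$ lands in $G_0$). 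Your parenthetical nod to ``all affinoid perfectoid test objects'' points in the right direction, but that $T$-point argument is the substance of the proof rather than a routine upgrade of the point count. The plans for (2) and (3) are essentially the paper's arguments, modulo the fact that the sheaf-level statement $\HOM(\whZ^{(p)},G)=\varinjlim\HOM(\Z/N,G)$ also requires the formal-model discreteness argument rather than just the no-small-subgroups fact for $G(K,K^+)$.
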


\begin{Remark}
	The aforementioned Theorem of
Fargues asserts that for a (classical) rigid group over $(K,\O_K)$ of characteristic $0$, there exists a rigid open subgroup $U\subseteq G$ such that \[|U|_{Ber}=\{x\in|G|_{Ber}\mid p^n\cdot x\xrightarrow{n\to \infty}0 \}.\]
\cref{l:hat-vs-p-divisible-groups} recovers $U$ and shows that it is equal to $G\langle p^\infty \rangle$. Fargues also shows part 2 in this case, and part 4 is his point of view on the $p$-divisible group of $G$  \cite[\S 0]{Fargues-groupes-analytiques}.

 From this perspective, the new aspects of \cref{l:hat-vs-p-divisible-groups} in the special case of classical rigid groups over $(K,\O_K)$ are the description of $U$ in terms of the $v$-sheaf $\underline{\Hom}(\uZp,-)$ (which crucially uses the language of $v$-sheaves), and the extension to coprime torsion via part 3.
\end{Remark}
\begin{Remark}
	In the second part of Proposition~\ref{l:hat-vs-p-divisible-groups}.3, we form the disjoint union in the category of adic spaces. This is relevant because in the setting of classical rigid spaces in the sense of Tate, the subspaces $xG\tpt$ for $x\in G[N](K)$ typically form a set-theoretic cover of $G$ that is not admissible. In the setting of adic spaces, this cover misses some points of rank $2$ in $G$ that are intentionally not included in $G^\tt$. See also \cref{s:abeloid-tt} below.
\end{Remark}

\begin{Example}\label{ex:interpretation-of-wh B-for-B-of-good-reduction}
	\begin{enumerate}
		\item\label{enum:example-G_m} To get an explicit description of $G\langle p^\infty\rangle$, it suffices by part 4 to find an open subgroup of good reduction $G_0\subseteq G$ such that $[p]^{-1}(G_0)\subseteq G_0$.  For example, for $G=\G_m$, such an open subgroup is given by the affine torus $\mathfrak G=\G_{m,\O_K}$. This shows that $\G_m\langle p^\infty\rangle=\widehat{\G}_m\subseteq \G_m$ is the open disc of radius 1 around 1. In particular, by part 4, we deduce that
		$\G_m^{\tt}=\mu(K)\widehat{\G}_m$,
		justifying the description in \cref{ex:top-tor-in-G_m}.
		\item More generally, such an open subgroup exists for semi-abeloid varieties $A$, which are of interest in the context of Picard functors. By Raynaud uniformisation, there is a maximal open subgroup $A^+\subseteq A$ that admits a connected smooth formal model. We then let $G_0$ be the disjoint union of translates of $A^+$ over points in $A[p^\infty]$. In fact, with this definition, the analogues of parts 1,3,4 also hold in characteristic $p$.
		\item If $G=\G_a$, then we cannot find an open subgroup of good reduction $G_0$ such that $[p]^{-1}(G_0)\subseteq G_0$.  This is consistent with the fact that the $\G_{a,\O_K}[p^n]$ do not form a $p$-divisible group. Instead, in this case, we have
		$\G_a\langle p^\infty\rangle=\G_a^{\tt}=\G_a$.
		\item The restriction to the rigid case is necessary: For example, set $G=\Z_l$ for $l\neq p$. Here $G\tpt=0 \subseteq G$ is not open, rather it is closed. 		
		For part 4, consider also the example of the formal scheme $\mathfrak G=\varprojlim_{[p]}\G_{m,K^+}$.
		Its adic generic fibre $G$ is a perfectoid group whose topological torsion subgroup is given by the open subgroup $\varprojlim_{[p]}\wh{\G}_m$. This is closely related to the universal cover of $\mu_{p^\infty}$ in the sense of \cite[\S3.1]{ScholzeWeinstein}. But the description of $G\langle p^\infty\rangle$  in terms of torsion points of the special fibre is no longer valid as $\mathfrak G$ is uniquely $p$-divisible, and hence its special fibre is $p$-torsionfree.
	\end{enumerate}
\end{Example}

\begin{Remark}
	We note that if $G$ is a rigid group, there is an alternative, ``analytic'' description of $G^\tt$, as follows. Using the $p$-adic logarithm, one can show that any morphism $\varphi:\Z_p\times T\to G$ in $\HOM(\Z_p,G)(T)$ admits an analytic continuation locally. More precisely, for $n\gg 0$, the restriction of $\varphi$ to $p^n\Z_p\times T$ extends uniquely to a $T$-linear homomorphism $\varphi:\B_n\times T\to G$
	where $\B_n\subseteq \G_a$ is the closed disc of radius $\leq |p^n|$ with its additive structure.
	
	In other words, if $\Z_p(\epsilon)$ denotes for any $\epsilon>0$ the open subgroup of $\G_a$ defined by the union of closed discs of radius $\epsilon$ around $\Z_p\subseteq \G_a(K)$, then we have
	\[ G\tpt = \varinjlim_{\epsilon>0}\HOM(\Z_p(\epsilon),G).\]
	While we do not need this observation in this article, it is useful for other applications.
\end{Remark}

In light of  Proposition~\ref{l:hat-vs-p-divisible-groups}, one can use the topological $p$-torsion subsheaf  to generalise the notion of ``analytic $p$-divisible groups'' in the sense of Fargues \cite[\S2 D\'efinition~2]{Fargues-groupes-analytiques}:

\begin{Definition}\label{d:analytic-divisible-group}
	\begin{enumerate}
		\item 
	We call a rigid group $G$ over $(K,K^+)$ a \textbf{$p$-divisible analytic group} if
	$G$ is topologically $p$-torsion (i.e.\ $G\tpt=G$) and  $[p]:G\to G$ is surjective.
	
	\item 
	We call a rigid group $G$ an \textbf{analytic divisible group} if
	$G$ is topologically torsion (i.e.\ $G^\tt=G$) and  $[n]:G\to G$ is surjective for all $n\in \N$.
\end{enumerate}
\end{Definition}
Examples of analytic divisible groups include $\Q/\Z$, $\G_a$ and $G^\tt$ for any rigid group $G$ for which $[n]:G\to G$ is surjective for all $n\in \N$, e.g.\ for abeloid varieties. One can then characterise $G^\tt$ as the maximal analytic divisible subgroup of $G$, and similarly for $G\tpt$.

\medskip

We now start with the proof of \cref{l:hat-vs-p-divisible-groups}. For this we use:
\begin{Lemma}\label{p:G(K)-is-complete}
	Let $G$ be any rigid group over $(K,K^+)$. Then $G(K,K^+)$ is a complete topological group (complete with respect to the uniform structure given by open subgroups).
\end{Lemma}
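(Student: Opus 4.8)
The goal is to show that for a rigid group $G$ over $(K,K^+)$, the topological group $G(K,K^+)$ is complete with respect to the uniform structure defined by its open subgroups. The natural strategy is to reduce to a concrete local model. First I would observe that completeness of a topological group is a statement that can be checked on a basis of open subgroups at the identity, and that any rigid group $G$ over $(K,K^+)$ admits an open subgroup $G_0 \subseteq G$ that is affinoid — indeed, the identity section $\Spa(K,K^+)\hookrightarrow G$ has an affinoid open neighbourhood, and after translating and intersecting with its inverse we may assume it is symmetric; the subgroup it generates is open, and on a quasi-compact connected component only finitely many translates of $G_0$ are needed, so it suffices to prove completeness of $G_0(K,K^+)$. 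Thus we reduce to the case where $G = \Spa(A, A^+)$ is affinoid.

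**Main step.** For $G$ affinoid, $G(K,K^+) = \Hom_{(K,K^+)}(A, K) \subseteq \Hom_{K}(A,K)$, and the subspace topology from $|G|$ agrees with the topology of pointwise convergence on $A$ (equivalently, on a finite set of affinoid generators $f_1,\dots,f_d$ of $A$, via the closed embedding $G \hookrightarrow \B^d$). A Cauchy net $(x_\alpha)$ in $G(K,K^+)$ with respect to open subgroups is in particular, after composing with $f_i$, a Cauchy net in $\B^1(K,K^+) = K^+$; since $K$ is complete and $K^+\subseteq K$ is closed (it is a ring of integral elements, hence bounded and $\m$-adically separated-complete together with the closedness of the integral closure), each coordinate net $(f_i(x_\alpha))$ converges to some $c_i \in K^+$. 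This produces a limit point $x \in \B^d(K,K^+)$; one then checks that $x$ lies in the Zariski-closed subspace $G$, because the finitely many equations cutting out $G$ in $\B^d$ are given by power-bounded functions which are continuous for the uniform topology, so they vanish at $x$ by passing to the limit. The subtlety is matching the two notions of Cauchy-ness: one must verify that a net which is Cauchy for the filtration by \emph{open subgroups} of $G(K,K^+)$ is also Cauchy for the \emph{additive} uniform structure pulled back along the $f_i$ — but this is automatic since $G(K,K^+)$ carries the subspace topology from $\B^d(K,K^+) = (K^+)^d$, on which the additive and ``open-subgroup'' uniformities coincide, the open subgroups $(\m^n)^d$ forming a neighbourhood basis of $0$.

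**The obstacle.** The part requiring the most care is the reduction to the affinoid case: a general rigid group need not be quasi-compact, so one cannot literally write $G$ as a finite union of affinoid opens. The fix is that completeness is a statement about the \emph{identity component} of the uniform structure — a Cauchy net is eventually contained in any fixed open subgroup $U$, hence eventually in a single coset $gU$ with $U$ affinoid, and translating by $g^{-1}$ lands us in the affinoid (or at worst quasi-compact) open subgroup $U$, to which the previous argument applies. One should also be slightly careful that $K^+$ is closed in $K$: this holds because $K^+$ is the preimage of $\O_{\kappa}$ in $K^\circ$ under the reduction map for an appropriate valuation, equivalently it is defined by a closed condition on valuations; in the main case of interest $K^+ = \O_K = K^\circ$ this is clear. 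With these points addressed the argument is routine.
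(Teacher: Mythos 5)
Your overall strategy (pass to an affinoid open subgroup, show the coordinate functions of a Cauchy net converge in $K^+$, assemble the limit as a point) is the same as the paper's, but the step you flag and then dismiss as ``automatic'' is precisely the crux of the proof, and your justification for it is incorrect. The uniform structure on $G(K,K^+)$ is defined by the open subgroups of $G$ \emph{for the group law of $G$}: Cauchy means $x_\alpha\cdot_G x_\beta^{-1}\in W$ eventually, for every open subgroup $W$. The coordinates $f_i$ take values in the \emph{additive} group $(K^+)^d$, and the group law of $G$ restricted to an open chart $U\cong\B^d$ is almost never the additive one (for $G=\G_m$ it is multiplicative on $\wh\G_m$; for an abeloid variety it is a nontrivial formal group law). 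Knowing that $G(K,K^+)\hookrightarrow(K^+)^d$ is a topological embedding identifies the \emph{topology}, not the uniformity: two group laws inducing the same topology can have inequivalent uniform structures, and ``$x_\alpha\cdot_G x_\beta^{-1}\to e$'' does not formally imply ``$f(x_\alpha)-f(x_\beta)\to 0$ in $K$''. What is actually needed is that every $f\in\O^+(U)$ is \emph{uniformly continuous} for the group uniformity, i.e.\ that for each $\epsilon>0$ there is an open subgroup $W_\epsilon\subseteq U$ with $|f(y\cdot_G\delta)-f(y)|\le\epsilon$ for all $y\in U$ and $\delta\in W_\epsilon$. The paper proves exactly this by considering the map $U\times U\to\B$, $(y,\delta)\mapsto f(y+\delta)-f(y)$, which vanishes on $U\times\{0\}$, and using that $U\times\{0\}$ is the tilde-limit of the $U\times\B_n$ to extract such a $W_\epsilon$ by an approximation argument. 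This (standard, but not automatic) argument is missing from your proof, and without it the claim that the coordinate nets are Cauchy in $K^+$ is unsupported.

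A secondary gap: generating a subgroup from a symmetric affinoid neighbourhood of the identity produces an \emph{open} subgroup, but not an affinoid or even quasi-compact one --- the increasing union of the products $V\cdot V\cdots V$ has no reason to stabilise. So your reduction does not by itself supply affinoid open subgroups, let alone arbitrarily small ones inside a given affinoid chart. The paper instead invokes the nontrivial fact (due to Fargues over $(K,\O_K)$, and extended in the author's earlier work) that a rigid group admits a neighbourhood basis of the identity consisting of open subgroups isomorphic to $\B^d$ as rigid spaces; this is also what lets one translate the Cauchy net into a single chart and what makes $G(K,K^+)$ a topological group in the first place. With these two points repaired, the rest of your argument (closedness of $K^+$ in $K$, passage to the limit in the defining equations, translation of the net into a coset of a small open subgroup) is fine and matches the paper's proof.
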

\begin{proof}
	To simplify notation, let us just write $G(K)$ for $G(K,K^+)$.	
	We first note that $G(K)$ is a topological group: This is not completely obvious for adic groups since the map $|G\times G|\to |G|\times |G|$ is usually not a bijection. However, the continuous bijection \[(G\times G)(K)\to G(K)\times G(K)\] on $K$-points is indeed a homeomorphism:
	This follows from the fact that any rigid group admits a neighbourhood basis of $0$ consisting of open rigid subgroups $U\subseteq G$ such that $U\cong \B^d$ as rigid spaces (\cite[\S1, Corollaire 4]{Fargues-groupes-analytiques} over $(K,\O_K)$, \cite[Corollary~3.7]{heuer-G-torsors-perfectoid-spaces} in general). This also shows that $G(K)$ is Hausdorff.
	
	To see that $G(K)$ is complete, let $(x_n)_{n\in \N}$ be any Cauchy sequence. Then for $n,m\gg 0$, we have $x_n-x_m\in U$, and thus $x_n\in x_m+U$. After translating by $x_m$, we may therefore assume $x_n\in U$ for all $n$.
	Let now $f\in \O^+(U)$ and consider the map 
	\[U\times U\to \B, \quad y,\delta\mapsto f(y+\delta)-f(y).\]
	For any $\epsilon>0$, let $V_{\epsilon}\subseteq U\times U$ be the pullback of the open ball $\B_{\epsilon}$ of radius $\epsilon$ around $0$. Since $U\times \{0\}\subseteq V_{\epsilon}$, and $U\times \{0\}$ is the tilde limit of $\varprojlim_{n\in\N} U\times \B_{n}$ where $\B_n$ is the ball of radius $|p^n|$, it follows from a standard approximation argument that there is an open subgroup $W_{\epsilon}\subseteq U$ such that $U\times W_{\epsilon}\subseteq V_\epsilon$. For $n,m\gg 0$, we then have $x_m-x_n\in W_\epsilon$ which implies $|f(x_m)-f(x_n)|\leq \epsilon $.
	Thus $f(x_m)$ is a Cauchy sequence in $K^+$. It thus makes sense to define
	\[ x:\O^+(U)\to K^+,\quad f\mapsto \mathrm{lim}_n f(x_n).\] 
	One easily checks that this defines the desired limit $x_n\to x\in U(K,K^+)\subseteq G(K,K^+)$. 
\end{proof}
\begin{Remark}
	With more work, one can show that $G(K,K^+)$ is a complete topological group for any adic group  $G$ over $(K,K^+)$ satisfying the following mild technical assumption: $G$ admits a basis of uniform affine open subspaces $\mathcal B$ such that for each $U=\Spa(A,A^+)$ and $V=\Spa(B,B^+)$ in $\mathcal B$, the fibre product $U\times_KV$ exists and is affinoid with $\O(U\times_KV)=A\hotimes_K B$ the tensor product of complete normed $K$-vector spaces. For example, this holds for perfectoid groups. One then obtains the analogous description of $K$-points of $G\tpt$.
\end{Remark}	

\begin{proof}[Proof of \cref{l:hat-vs-p-divisible-groups}]
We begin by describing the $(K,K^+)$-points of $G\langle p^\infty\rangle$ and $G^\tt$:
	By Lemma~\ref{l:underline-is-left-adjoint}, we have $\HOM(\whZ,G)(K)=\Hom_{\cts}(\whZ,G(K))$.
Let  $\varphi:\whZ\to G(K)$ be any continuous homomorphism. This is uniquely determined by the image $x=\varphi(1)$ of  $1$. By continuity, since $n!\to 0$ in $\wh\Z$, we have $x^{n!}\to 0$ in $G(K)$.

	Conversely, let $x\in G(K)$ be such that $x^{n!}\to 0$. Let $f:\Z\to G(K)$ be the homomorphism defined by $f(1)=x$. For any open subgroup $U\subseteq G(K)$, we have $x^{n!}\in U$ for $n\gg 0$ and thus $n!\Z\subseteq f^{-1}(U)$, so $f$ is continuous for the subspace topology on $\Z\subseteq \whZ$. By the universal property of the completion, $f$ now extends to $\whZ$ as $G(K)$ is complete by \cref{p:G(K)-is-complete}.
	
	The case of topological $p$-torsion is analogous. This gives the description of $G\tpt(K,K^+)$.

By \cite[Proposition~3.5, Corollary~3.8]{heuer-G-torsors-perfectoid-spaces}, there exists an open subgroup $G_1\subseteq G$ of good reduction that admits an open immersion $\log:G_1\hookrightarrow \mathrm{Lie}(G)\otimes_K \G_a$. Let \[G_2:=\textstyle\bigcup_{n\in\N} [p]^{-n}(G_1)\subseteq G,\] this is clearly an open adic subgroup of $G$. We claim that $G_2=G\tpt$.
	
To see that $G\tpt\subseteq G_2$, let $T=\Spa(S,S^+)$ be affinoid perfectoid over $(K,K^+)$ and let \[\varphi:\uZp\times T\to G\times T\] be any homomorphism over $T$. Then the preimage of $G_1\times T$ contains some neighbourhood $p^n\uZp\times T$ of the identity because 
$|\Z_p\times T|=\varprojlim |\Z/p^n\Z\times T|=\Z_p\times |T|$.
By the definition of $G_2$, 
this implies that $\varphi$ factors through $G_2$.

To see the converse, let now $\mathfrak G$ be the formal model of $G_1$ and consider the open subgroup 
\[G_0:=\mathfrak G[p^\infty]^{\ad}_\eta\subseteq G_1\subseteq G_2\subseteq G.\]
By \cite[Proposition~2.2.2.(ii)]{ScholzeWeinstein}, the points of $G_0$ on perfectoid $(K,K^+)$-algebras $(R,R^+)$ are given by the sheafification of the presheaf
\[(R,R^+)\mapsto \{x\in \mathfrak G(R^+)|\text{ for any $n\in \N$ we have }x\bmod p^n\in \mathfrak G[p^\infty](R^+/p^n)\}.\]

Suppose first that we are given any point $\varphi\in G_0(T)$. To prove that this is contained in $G\tpt(T)\subseteq G(T)$, we may without loss of generality pass from $T$ to an analytic cover. By the above, we may therefore assume that $\varphi$ has a formal model, i.e.\ comes from a group homomorphism
$\varphi:\Spf(S^+)\to \mathfrak G$
that reduces mod $p^n$ to compatible homomorphisms \[\uZp\times \Spf(S^+/p^n)\to \mathfrak G_{K^+/p^n}[p^\infty]\] for all $n$. Taking the limit over $n$ and passing to the generic fibre, this gives the desired map
\[ \uZp\times T\to G_0=\mathfrak G[p^\infty]^{\ad}_{\eta}.\]

Finally, we observe that given any $x\in G_2(T)$, there is $n$ such that $p^nx\in G_0(T)$. Let $\varphi:\underline{\Z}\times T\to G$ be the homomorphism sending $1\mapsto x$. As we have just seen, its restriction to $p^n\underline{\Z}\times T$ extends uniquely to a map $\widehat{\varphi}:p^n\underline{\Z}_p\times T\to G_0$. Using the short exact sequence 
\[ 0\to p^n\underline{\Z}\to p^n\underline{\Z}_p\times \underline{\Z}\to \underline{\Z}_p\to 0\]
of $v$-sheaves, $\varphi$ now extends uniquely to $\underline{\Z}_p\times T$ by the universal property of the cokernel.

This proves part 1. Part 2 follows from the open immersion $G_1\hookrightarrow \mathrm{Lie}(G)\otimes_K \G_a$, which can be uniquely extended to $G_2$ by sending $x\in G_0(T)$ with $p^nx\in G_1(T)$ to $\log(x)=p^{-n}\log(p^nx)$. The statement about surjectivity follows  as $\mathrm{Lie}(G)\otimes_K \G_a=\cup_{n\in \N}p^{-n}\log(G_1)$.

For part 4, it remains to prove that any morphism $\varphi:\underline{\Z}_p\times T\to G$ factors through $G_0\subseteq G$. Since this is an open subgroup, we see as in the beginning of the proof that there is $n$ such that $\varphi(p^n\underline{\Z}_p\times T)\subseteq G_0$. But in the case of good reduction, we also have $[p]_G^{-1}(G_0)=G_0$ by the above explicit description of $G_0=\mathfrak G[p^\infty]_{\eta}^{\ad}$. Thus $\varphi(\underline{\Z}_p\times T)\subseteq G_0$.

To deduce part 3, write $\whZ=\Z_p\times \whZ^{(p)}$ where $\whZ^{(p)}=\varprojlim_{(N,p)=1} \Z/N$. It suffices to show:
\begin{Lemma}\label{l:coprime-to-p-HOM}
Let $H=\varprojlim_{i\in I} H_i$ be any profinite group for which the order of each $H_i$ is coprime to $p$. Let $G$ be any rigid group over $(K,K^+)$. Then we have
\[\HOM({H},G)=\textstyle\varinjlim_{i\in I}\HOM(H_i,G).\]
\end{Lemma}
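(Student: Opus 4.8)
The plan is to show that for a profinite group $H = \varprojlim_i H_i$ with each $|H_i|$ coprime to $p$, any $T$-linear homomorphism $\varphi\colon \underline H \times T \to G \times T$ factors through one of the finite quotients $\underline{H_i}\times T$, after passing to an analytic cover of $T$ if necessary; the claim then follows by taking the colimit. The point is that $\underline H$ is a profinite $v$-sheaf whose "$p$-part" is trivial, so a homomorphism to a rigid group $G$ cannot see the open subgroup $G\langle p^\infty\rangle$ in a nontrivial way, and once one lands in a complement of $G\langle p^\infty\rangle$ one is essentially dealing with a finite group.

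First I would reduce to $T$ affinoid perfectoid and work with $(K,K^+)$-points via Lemma~\ref{l:underline-is-left-adjoint}, which gives $\HOM(\underline H, G)(T) = \Hom_{\cts}(H, G(S,S^+))$ for $T = \Spa(S,S^+)$ (applied with the base $(S,S^+)$ in place of $(K,K^+)$, noting the formation of $\underline H$ base-changes). By Proposition~\ref{l:hat-vs-p-divisible-groups}.1, $G\langle p^\infty\rangle \subseteq G$ is an open subgroup, and more generally $G$ admits a neighbourhood basis of $0$ consisting of open subgroups $U$ with $U \cong \B^d$; such $U$ are uniquely $p$-divisible on the $\B^d$ away from... — more precisely, for $n \gg 0$ one has $\B^d \supseteq p^{-1}$-preimages shrinking, so in $G(S,S^+)$ the subgroup $G\langle p^\infty\rangle(S,S^+)$ is $p$-divisible-ish. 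The key local statement I want is: any continuous homomorphism $\psi\colon H \to U$ from a profinite group of pro-order coprime to $p$ into an open subgroup $U\cong\B^d$ must be trivial, because $\B^d(S,S^+)$ is a $\Z_p$-module (it is uniquely uniquely divisible by integers prime to $p$, as $U$ is a $p$-adic Lie group via $\log$), so a homomorphism from a group in which multiplication by $p$ is an isomorphism into a group on which multiplication by $p$ is "topologically nilpotent/divisible" is forced to be zero. Thus the image of $\psi$ meets some such $U$ only in $0$, i.e.\ the image of $H$ in $G(S,S^+)$ is discrete, hence finite since $H$ is compact; and a continuous surjection from $H$ onto a finite group factors through some $H_i$.

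Carrying this out: given $\varphi \in \HOM(\underline H, G)(T)$, the preimage $\varphi^{-1}(U)$ of a small open subgroup $U \cong \B^d$ is an open subgroup of $H$ (using $|\underline H \times T| = H \times |T|$ as in the proof of Proposition~\ref{l:hat-vs-p-divisible-groups}), hence of finite index and still of pro-order coprime to $p$. By the previous paragraph the restriction $\varphi|_{\varphi^{-1}(U)}$ is trivial, so $\varphi$ factors through the finite quotient $H/\varphi^{-1}(U)$, which in turn is a quotient of some $H_i$. This exhibits $\varphi$ in the image of $\HOM(\underline{H_i}, G)(T) \to \HOM(\underline H, G)(T)$, and the transition maps are the obvious ones, so $\varinjlim_i \HOM(\underline{H_i},G) \to \HOM(\underline H, G)$ is surjective on $T$-points; injectivity is clear since the $\underline{H_i}$ are quotients of $\underline H$ and $\varinjlim$ is filtered. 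As this holds for all affinoid perfectoid $T$ compatibly, and filtered colimits of $v$-sheaves are computed as sheafified presheaf colimits, we conclude $\HOM(H,G) = \varinjlim_i \HOM(H_i,G)$ as $v$-sheaves.

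The main obstacle I anticipate is making rigorous the assertion that a continuous homomorphism from a pro-(order-prime-to-$p$) group into an open subgroup $U \cong \B^d$ of a rigid group is trivial — equivalently, that $\B^d(S,S^+)$ (with its group structure coming from $G$) contains no nontrivial torsion or profinite subgroups of order prime to $p$. Over a field $K$ this follows from $\log$ identifying $U$ with an open subgroup of $\mathrm{Lie}(G)\otimes_K\G_a$, which is torsionfree and uniquely divisible, so it has no prime-to-$p$ torsion and multiplication by any $N$ prime to $p$ is injective with image an open subgroup; the only subtlety is working relatively over $(S,S^+)$ rather than over $(K,K^+)$, where one should instead argue directly that $\varphi^{-1}(U) \to U$ is a homomorphism from a profinite group all of whose finite quotients have order prime to $p$ into a group on which $[p^k]$ shrinks neighbourhoods of $0$, and run a convergence argument: for $h \in \varphi^{-1}(U)$ pick $h = h^{N}\cdot(h^{1-N})$ with $N \equiv 0 \bmod (\text{index})$ to show $\varphi(h)$ is infinitely $[N']$-divisible for all $N'$ prime to $p$, hence lies in $\bigcap_k [p^k]$-something, forcing $\varphi(h) = 0$ by the open-immersion-into-$\G_a^{\,\mathrm{Lie}}$ description base-changed to $(S,S^+)$. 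I would isolate this as a short sublemma.
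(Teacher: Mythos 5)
Your overall strategy is sound and genuinely different from the paper's. The paper also first shrinks $H$ to an open subgroup $H'$ with $f(H'\times T)$ landing in a small open subgroup of $G$ (using $|\underline H\times T|=H\times|T|$, exactly as you do), but then finishes by passing to the formal model: $f$ factors through $G_0=\mathfrak G[p^\infty]^{\ad}_\eta$, reduces to $T=\Spa(K,K^+)$, and corresponds to an inverse system of maps $H\to\mathfrak G(\O_K/p^n)[p^\infty]$ into \emph{discrete $p$-power-torsion} groups; such a map factors through some $H_i$ of order prime to $p$ and is therefore trivial. You instead work with the logarithm chart $G_1\hookrightarrow\mathrm{Lie}(G)\otimes_K\G_a$ and kill the homomorphism by an elementary divisibility/convergence argument. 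This is viable and arguably more self-contained (no formal models, no $\mathfrak G[p^\infty]$), at the cost of having to make sense of ``continuous homomorphism $H'\to G(S,S^+)$'' over a general affinoid perfectoid base, since Lemma~\ref{l:underline-is-left-adjoint} is only proved over a field. You could sidestep that entirely by checking triviality after pullback along a $v$-cover of $T$ by geometric points $\Spa(C,C^+)\to T$, where the lemma applies verbatim; alternatively, note that for the argument below no topology on $G(S,S^+)$ is needed at all, only the abstract group homomorphism $H'\to G_1(S,S^+)$ obtained by restricting $f$ to the fibres $\{h\}\times T$.

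The one step that needs repair is the ``sublemma'' in your final paragraph: the claim that $\varphi(h)$ is ``infinitely $[N']$-divisible for all $N'$ prime to $p$'' proves nothing, since $\mathrm{Lie}(G)\otimes_K S$ is a $\Q$-vector space and \emph{every} element is infinitely divisible by integers prime to $p$; the jump to ``$\bigcap_k[p^k]$-something'' is unexplained. The correct version is the one you already gesture at in your middle paragraph, and it uses $p$-power divisibility on the \emph{source}: since every finite quotient of $H'$ has order prime to $p$, the map $h\mapsto h^{p^k}$ is bijective on $H'$, so $\psi(h)=p^k\,\psi(h^{1/p^k})$ for all $k$; the image of $\psi$ lies in $\log(G_1(S,S^+))$, which is contained in a bounded ($p$-adically separated) subgroup of $\mathrm{Lie}(G)\otimes_K S$, whence $\log\psi(h)\in\bigcap_k p^k\cdot(\text{bounded})=0$ and $\psi(h)=0$ by injectivity of $\log$ on $G_1$. (Equivalently: $h^{N_j}\to 1$ for $N_j=|H_j'|$ prime to $p$, and multiplication by $N_j$ is an isometry on $\mathrm{Lie}(G)\otimes_K S$ because $N_j\in\Z_p^\times$, forcing $\log\psi(h)=0$.) With that substitution, and with the factorisation through some $H_i$ obtained from cofinality of the kernels $\ker(H\to H_i)$ among open normal subgroups, your proof goes through.
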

\begin{proof}
Let $f:{H}\times T\to G$ be any homomorphism of adic groups over a perfectoid space $T$. We need to prove that some open subgroup of $H$ gets sent to the identity. In the notation introduced above, we can therefore assume that $f$ factors through the open subgroup $G_0=\mathfrak G[p^\infty]^{\ad}_\eta\subseteq G$ and that $f$ admits a formal model ${H}\times \Spf(S^+)\to \mathfrak G$. We claim that $f$ is then trivial. To see this, we may reduce to $T=\Spa(K,K^+)$.
Then $f$ corresponds to an inverse system of homomorphisms
$H\to \mathfrak G(\O_K/p^n)[p^\infty]$.
Since the right hand side is discrete, this factors through $H_i$ for some $i$. But $H_i$ is of order coprime to $p$, so this map is trivial.
\end{proof}
This finishes the proof of Proposition~\ref{l:hat-vs-p-divisible-groups}.
\end{proof}

\subsection{Topological torsion in abeloid varieties}\label{s:abeloid-tt}
An example of particular interest for our applications to the $p$-adic Simpson correspondence is the case of  abeloid varieties, i.e.\ connected smooth proper rigid groups. For these we have the following curious special case:

\begin{Proposition}\label{l:tt-over-C_p}
	Let $A$ be an abeloid variety over $K=\C_p$ or $\C_p^\flat$.  Then \[A^\tt(K)=A(K).\]
\end{Proposition}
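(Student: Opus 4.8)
The plan is to use the uniformisation theory of abeloid varieties together with \cref{l:hat-vs-p-divisible-groups}, reducing the statement to two concrete facts about $\C_p$: that its residue field $\overline{\F}_p$ is a union of finite fields, and that its value group is all of $\Q$. By Raynaud uniformisation for abeloid varieties (see the discussion in \cref{ex:interpretation-of-wh B-for-B-of-good-reduction}), there is a short exact sequence $0\to M\to E\to A\to 0$ of rigid groups, where $M$ is a lattice (a discrete free $\Z$-module of finite rank) and $E$ is an extension $0\to T\to E\to B\to 0$ of an abeloid variety $B$ of good reduction by a rigid torus $T\cong \G_m^r$. Since $[n]\colon A\to A$ is surjective for all $n$ (abeloid varieties are divisible), $A^\tt$ is the maximal analytic divisible subgroup of $A$, and by \cref{l:hat-vs-p-divisible-groups}.3 we have $A^\tt=\HOM(\whZ,A)$; so it suffices to show every $x\in A(K)$ lies in the image of some continuous homomorphism $\whZ\to A(K)$, equivalently that $x^{n!}\to 0$ in $A(K)$ for the uniform topology given by open subgroups.

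The key step is the good-reduction case, i.e.\ $B^\tt(K)=B(K)$ for $B$ with good reduction, with formal model $\mathfrak B/\O_K$. By \cref{l:hat-vs-p-divisible-groups}.4, $B^\tt(K)$ consists of those points of $\mathfrak B(\O_K)$ whose reduction to the special fibre $\mathfrak B_{\overline{\F}_p}$ is a torsion point. But $\mathfrak B_{\overline{\F}_p}$ is an abelian variety (or at least a group scheme of finite type) over $\overline{\F}_p=\bigcup_q \F_q$, and every $\overline{\F}_p$-point of a finite-type group scheme over $\overline{\F}_p$ is defined over some finite field $\F_q$, hence lies in the finite group $\mathfrak B(\F_q)$ and is therefore torsion. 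Thus the reduction map exhibits $B(\O_K)=\mathfrak B(\O_K)$ as entirely topologically torsion. Next, for the torus $T=\G_m^r$: by \cref{ex:interpretation-of-wh B-for-B-of-good-reduction}\eqref{enum:example-G_m}, $\G_m^\tt(\C_p)=\mu(\C_p)\wh{\G}_m(\C_p)=\O_{\C_p}^\times$, and since $v_p(\C_p^\times)=\Q=v_p(\mu_N)$-divisible, every element of $\C_p^\times$ has an $N$-th root whose valuation lies in the image; more directly, $\C_p^\times/\O_{\C_p}^\times\cong\Q$ is uniquely divisible, so $\C_p^\times$ is generated as a group by $\O_{\C_p}^\times$ together with arbitrary roots, and one checks $\G_m^\tt(\C_p)=\C_p^\times$ fails — rather one must argue that $T^\tt(\C_p)=T(\C_p)$ does \emph{not} hold, so the torus is not handled naively. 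This is the main obstacle: the torus part is genuinely not all topological torsion (consistent with the Hopf-variety remark), so the uniformisation must be used in the form where $E$, not $T$, is the relevant object, and one exploits that $A=E/M$ with $M$ discrete.

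The resolution is to work with $E^\tt$ and push forward. Concretely: given $x\in A(K)$, lift it to $\tilde x\in E(K)$. Since $E$ sits in $0\to T\to E\to B\to 0$ with $B^\tt=B$, the image $\bar{\tilde x}\in B(K)$ satisfies $\bar{\tilde x}^{\,n!}\to 0$; after adjusting $\tilde x$ by an element of $T(K)$ we may assume $\bar{\tilde x}$ itself lies in the maximal good-reduction subgroup, and then the obstruction to $\tilde x^{n!}\to 0$ lies in $T(K)$. Here one uses that the composite $M\hookrightarrow E\to B$ lands in a way compatible with the splitting, and that quotienting by the lattice $M$ "absorbs" the non-topologically-torsion directions of the torus: the point is that $A=E/M$, and modulo $M$ the valuation coordinates of the torus become bounded, i.e.\ the image of $x$ under any character-type map to $\G_m$ has valuation in a bounded range, forcing it into $\G_m^\tt=\O_{\C_p}^\times$. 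More precisely, I would argue that the open subgroup $A^+\subseteq A$ (image of the maximal good-reduction subgroup $E^+\subseteq E$, which by Raynaud's construction is an extension of $\mathfrak B^{\ad}_\eta$ by $\wh{T}=\wh\G_m^r$) has good reduction, and $A/A^+$ is the finite-rank free quotient coming from $M$; since $[n]\colon A\to A$ is surjective, every $x\in A(K)$ has, for each $n$, an $n$-th root, and the images in the finite free group $A(K)/A^+(K)$ must be infinitely $n$-divisible hence zero, so $x\in A^+(K)$ up to a torsion point — wait, this needs $A(K)/A^+(K)$ torsionfree, which holds as it embeds in $M\otimes\Q$. Then $A^+$ has good reduction and we apply the good-reduction case. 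I would spell out: $A^+(K)=A^+\tt(K)$ by \cref{l:hat-vs-p-divisible-groups}.4 (residue field a union of finite fields kills the abeloid part; for the $\wh\G_m^r$ part, $\wh\G_m=\G_m\tpt$ directly), and torsion points are topological torsion trivially, so $A(K)=A^+(K)\cdot A(K)_{\mathrm{tors}}\subseteq A^\tt(K)$, completing the proof. The case $K=\C_p^\flat$ is identical, its residue field being $\overline{\F}_p$ and its value group $\Q$ as well.
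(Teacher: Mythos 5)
Your overall strategy is the same as the paper's: settle the good-reduction case via the specialisation map and the fact that $\overline{\F}_p$ is a union of finite fields, then reduce the general case to it through the Raynaud uniformisation $A=E/M$. The good-reduction part of your argument is correct and matches the paper. The gap is in the final step, where the lattice $M$ has to absorb the valuation directions of the torus $T$.

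Two things go wrong there. First, the claim that $A(K)/A^+(K)$ is torsionfree because it ``embeds in $M\otimes\Q$'' is false. One has $E(K)/E^+(K)\cong (K^\times/\O_K^\times)^r\cong \Q^r$ via the valuation (using $v(\C_p^\times)=\Q$), and the image of $M$ in $\Q^r$ is a rank-$r$ lattice spanning $\Q^r$ over $\Q$, so $A(K)/A^+(K)\cong \Q^r/\Z^r\cong(\Q/\Z)^r$ is a nonzero torsion group. Consequently the deduction ``divisible and torsionfree, hence zero, so $x\in A^+(K)$'' fails, and its conclusion is false: for a Tate curve $\G_m/q^{\Z}$ over $\C_p$ one has $A^+(K)=\O_{\C_p}^\times\subsetneq A(K)$. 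Second, and more importantly, the assertion that quotienting by $M$ ``bounds the valuation coordinates'' is precisely the point requiring proof, and it needs an input you never invoke: the injectivity of $M\to \Hom(M^\vee,\Q)$ coming from the duality theory of abeloid varieties (the paper cites L\"utkebohmert for this). Without it there is no reason the image of $M$ in $\Q^r$ has full rank, and $A(K)/A^+(K)$ need not be torsion.

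The argument is repairable along your lines: non-degeneracy gives that $A(K)/A^+(K)$ is torsion, so every $x\in A(K)$ satisfies $x^N\in A^+(K)=A^{+\tt}(K)$ for some $N$, and then $x^{n!}=(x^N)^{n!/N}\to 0$ because $n!/N\to 0$ in $\whZ$, whence $x\in A^\tt(K)$ by \cref{l:hat-vs-p-divisible-groups}. The paper instead tensors with $\Q$: it proves $(E\tpt(K)\cdot M)\otimes\Q=E(K)\otimes\Q$ in \cref{l:whE(K)-vs-E(K)-over-C_p} by comparing the sequence $0\to E\tpt(K)\otimes\Q\to E(K)\otimes\Q\to\Hom(M^\vee,\Q)\to 0$ with the map $M\otimes\Q\to\Hom(M^\vee,\Q)$, an isomorphism of $\Q$-vector spaces of equal dimension by non-degeneracy, and then uses that the cokernel of $A\tpt\to A^\tt$ is torsion. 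Either way, the duality input is indispensable and is what is missing from your write-up.
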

\begin{Remark}
	This does \textit{not} mean that the open immersion $A^\tt\to A$ is an isomorphism. Indeed, this is not the case; for example $A$ is connected while $A^\tt$ is an infinite disjoint union of open discs  by Proposition~\ref{l:hat-vs-p-divisible-groups}.
	A closely related phenomenon is that the statement of \cref{l:tt-over-C_p} becomes false for any non-trivial extension of $K$, already for Tate curves.
	
	This difference is relevant to the $p$-adic Simpson correspondence as it explains why the description in \cref{t:intro-Corollary-Cp} in terms of vanishing Chern classes is only valid over $\C_p$.
\end{Remark}
\begin{proof}
	If $A$ has good reduction with special fibre $\overline{A}$ over the residue field $\overline{\F}_p$, then by \cref{l:hat-vs-p-divisible-groups}.4, the specialisation map $A(K)\to \overline{A}(\overline{\F}_p)$ induces a short exact sequence
	\begin{equation}\label{eq:tpt-ses-of-abeloid-w-good-reduction}
	0\to A\tpt(K)\to A(K)\to \overline{A}(\overline{\F}_p)\tf\to 0.
	\end{equation}
	The last term is an abelian torsion group: Indeed, the abelian variety $\overline{A}$ is already defined over a finite extension $\F_q$ of $\F_p$ in $\overline{\F}_p$, and hence \[\overline{A}(\overline{\F}_p)=\textstyle\varinjlim_{k|\F_q} \overline{A}(k)\] is a colimit of finite groups, where $k$ ranges through finite extensions of $\F_q$ in $\overline\F_p$. By the description of $A^\tt$ in \cref{l:hat-vs-p-divisible-groups}.3, this shows that $A^\tt(K)=A(K)$ in this case.
	
	The general case follows from the case of good reduction via Raynaud uniformisation: By \cite[Theorem 7.6.4]{Lutkebohmert_RigidCurves}, we can write  $A=E/M$ for some analytic short exact sequence \[0\to T\to E\to B\to 0\] of rigid groups where $B$ is an abeloid variety of good reduction and $T$ is a rigid torus, and where $M\subseteq E(K)$ is a discrete lattice of rank equal to the rank of $T$. It is easy to see that this exact sequence of $E$ becomes split over $B\tpt$, hence induces an exact sequence
	\begin{equation}\label{eq:tpt-of-Raynaud-extension}
	0\to T\tpt\to E\tpt\to B\tpt\to 0
	\end{equation}
	 Let $M\cdot  E\tpt\subseteq E$ be the open subgroup generated by $M$ and $E\tpt$. The composition
	\[M\cdot  E\tpt\to E\to A\]
 factors through $A\tpt$. Recall that the cokernel of $A\tpt\to A^\tt$ is torsion by \cref{l:hat-vs-p-divisible-groups}.3, so to prove that $A^\tt(K)=A(K)$ it now suffices to prove that the above map is surjective on $K$-points after tensoring with $\Q$. Hence it suffices to prove the following:
\end{proof}
\begin{Lemma}\label{l:whE(K)-vs-E(K)-over-C_p}
	Let $A=E/M$ be an abeloid variety over $\C_p$ or $\C_p^\flat$. Then \[(E\tpt(K)\cdot M)\otimes \Q=E(K)\otimes \Q.\]
\end{Lemma}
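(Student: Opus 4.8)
The plan is to reduce the statement to the good reduction case, where it can be checked by hand using the specific arithmetic of $\C_p$ (the value group being $\Q$ and the residue field being $\overline{\F}_p$), exactly as in the proof of \cref{l:tt-over-C_p}. Concretely, write $A = E/M$ with $0\to T\to E\to B\to 0$ the Raynaud uniformisation, where $B$ is abeloid of good reduction and $T\cong \G_m^r$ is a split torus of rank $r = \rk M$. Since tensoring with $\Q$ is exact and kills torsion, it suffices to show that every class in $E(K)\otimes\Q$ is, modulo $M\otimes\Q$, represented by an element of $E\langle p^\infty\rangle(K)$.

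First I would handle the torus $T = \G_m^r$: by \cref{ex:interpretation-of-wh B-for-B-of-good-reduction}\eqref{enum:example-G_m} we have $\G_m\langle p^\infty\rangle(K) = \wh{\G}_m(K) = 1 + \m$, and $\G_m(K)/\G_m\langle p^\infty\rangle(K) \cong \C_p^\times/(1+\m)$, which after $\otimes\,\Q$ becomes $\Q$ via the valuation $v_p$ (using $v_p(\C_p) = \Q$ and that $\O_{\C_p}^\times/(1+\m) = \overline{\F}_p^\times$ is torsion). Thus $T(K)\otimes\Q \cong \Q^r$, and the key point is that the lattice $M$ maps to a full-rank sublattice of $T(K)$ under a suitable projection; more precisely, the composite $M\hookrightarrow E(K)\to T(K)\otimes\Q$... — here one must be slightly careful, since $M$ need not lie in $T$. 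Instead the cleaner route: by \cref{eq:tpt-of-Raynaud-extension} there is an exact sequence $0\to T\langle p^\infty\rangle\to E\langle p^\infty\rangle\to B\langle p^\infty\rangle\to 0$, and from \cref{l:hat-vs-p-divisible-groups}.4 applied to $B$ we get $B(K)/B\langle p^\infty\rangle(K) = \overline{B}(\overline{\F}_p)[\tfrac1p]$, a torsion group, so $B\langle p^\infty\rangle(K)\otimes\Q = B(K)\otimes\Q$.

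Chasing the two exact sequences, the cokernel of $E\langle p^\infty\rangle(K)\otimes\Q \to E(K)\otimes\Q$ is therefore a quotient of $\coker(T\langle p^\infty\rangle(K)\otimes\Q\to T(K)\otimes\Q) = (\C_p^\times/(1+\m))^r\otimes\Q = \Q^r$, and this cokernel is generated by the image of $M$ under the valuation map $E(K)\to T(K)\otimes\Q = \Q^r$ followed by nothing — wait, $M$ does not obviously surject onto $\Q^r$. The genuine input here is that $M$ is a lattice of \emph{full rank} $r$ in $E$ and that the composite $M\to E(K)\to \Q^r$ (the tropicalisation / valuation map, well-defined because changing a representative by $E\langle p^\infty\rangle(K) = $ the "bounded" part does not change the valuation) has image a full-rank subgroup of $\Q^r$: this is precisely the non-degeneracy of the Raynaud uniformisation lattice, i.e. that $M\to T(K)\otimes\Q$ via the canonical polarisation pairing (or via the tropicalization) is injective with finite-index-after-$\otimes\Q$, equivalently cofinal, image. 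Granting this, $M\otimes\Q$ surjects onto $\coker(E\langle p^\infty\rangle(K)\otimes\Q\to E(K)\otimes\Q)$, which gives $(E\langle p^\infty\rangle(K)\cdot M)\otimes\Q = E(K)\otimes\Q$ as desired.

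The main obstacle I anticipate is making the tropicalisation map $E(K)\to \Q^r$ and the full-rank assertion about $M$ precise and correctly invoking it: one needs that $M$ maps to a discrete, cocompact-after-$\otimes\Q$ subgroup under the composite $E(K)\to E(K)/E\langle p^\infty\rangle(K)\otimes\Q$, which factors through $T(K)/T\langle p^\infty\rangle(K)\otimes\Q\cong\Q^r$ since $B(K)\otimes\Q$ is absorbed. This is standard in the theory of Raynaud uniformisation (it is what makes $E/M$ proper), and I would cite \cite[Theorem 7.6.4]{Lutkebohmert_RigidCurves} or the surrounding discussion; the rest is diagram-chasing with the exact sequences \eqref{eq:tpt-of-Raynaud-extension} and the good-reduction computation. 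The $\C_p^\flat$ case is identical, using that $v_p$ on $\C_p^\flat$ also has value group $\Q$ and residue field $\overline{\F}_p$.
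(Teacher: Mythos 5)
Your proposal is correct and follows essentially the same route as the paper: Raynaud uniformisation, the good-reduction computation showing $B\tpt(K)\otimes\Q=B(K)\otimes\Q$, the identification of $\coker(E\tpt(K)\otimes\Q\to E(K)\otimes\Q)$ with a quotient of $T(K)/T\tpt(K)\otimes\Q\cong\Q^{\rk T}$ via the valuation, and the full-rank property of $M$ as the final input. The only point you leave imprecise — how exactly to see that $M$ has full rank in the tropicalisation — is handled in the paper by pairing with the character lattice $M^\vee=\Hom(T,\G_m)$ to get a map $M\to\Hom(M^\vee,\Q)$, which is injective by the duality theory of abeloids \cite[\S6.3, Proposition 6.1.8.c(ii)]{Lutkebohmert_RigidCurves} and hence an isomorphism after $\otimes\,\Q$ by equality of ranks $\rk M=\rk T=\rk M^\vee$.
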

\begin{proof}
	As we have already noted above, the group $\overline B(\overline{\F}_p)$ is torsion, and thus 
	$\overline B(\overline{\F}_p)\otimes \Q=0$.
	By tensoring with $\Q$ the short exact sequence \eqref{eq:tpt-ses-of-abeloid-w-good-reduction} in the case of $A=B$,	this proves  the statement in the case of good reduction, where the lattice $M$ is trivial.
	
	The same argument shows that the inclusion 
	$(1+\m_K)\otimes \Q\to \O_{\C_p}^\times\otimes \Q$
	is an isomorphism as $\overline{\F}_p^\times$ is torsion. Consequently, the valuation on $K$ defines a short exact sequence
	\[ 0\to (1+\m_K)\otimes \Q\to K^\times\otimes \Q\to \Q\to 0\]
	If $M^\vee=\Hom(T,\G_m)$ denotes the character lattice of $T$, we obtain from this and the pairing $M^\vee\times T(K)\to K^\times$  an exact sequence
	\[0\to T\tpt(K)\otimes \Q\to T(K)\otimes \Q\rightarrow \Hom(M^\vee,\Q)\to 0.\]
	Now since
	$B\tpt(K)\otimes \Q=B(K)\otimes \Q$, we can use \cref{eq:tpt-of-Raynaud-extension} to extend this to an exact sequence
	\[0\to E\tpt(K)\otimes \Q\to E(K)\otimes \Q\rightarrow \Hom(M^\vee,\Q)\to 0.\]
	Finally, we consider the composition 
	\[M\to E(K)\to  \Hom(M^\vee,\Q)\]
	which is injective by duality theory of abeloids (see \cite[\S6.3, Proposition 6.1.8.c(ii)]{Lutkebohmert_RigidCurves}). Consequently, tensoring with $\Q$ turns it into an injective homomorphism of $\Q$-vector spaces of the same dimension $\rk M=\rk T=\rk M^\vee$, and hence into an isomorphism. This shows that $E\tpt(K)$ and $M$ generate all of $E(K)$ after tensoring with $\Q$.
\end{proof}

\section{Pro-finite-\'etale line bundles in families}
Let $K$ be an algebraically closed non-archimedean field over $\Q_p$ with ring of integral elements $K^+$. Let $\pi:X\to \Spa(K,K^+)$ be a connected smooth proper rigid space over $K$.
Based on the preparations of the last section as well as the prequel articles \cite{heuer-diamantine-Picard,heuer-v_lb_rigid}, the goal of this section is to answer the following question in the case of line bundles:
\begin{Question}\label{q:profet-vb}
	Which vector bundles on $X$ are trivialised by pro-finite-\'etale covers of $X$?
\end{Question}

As explained in detail in the introduction, this question is of interest in the $p$-adic Simpson correspondence, as it is known to lead to the answer of \cref{q:condition-Q} in various situations, including the case of Higgs field $0$, the case of line bundles, and of abelian varieties \cite{wuerthen_vb_on_rigid_var}\cite{MannWerner_LocSys_p-adVB}\cite{heuer-v_lb_rigid}\cite{HMW-abeloid-Simpson}.
Our aim  is to give a complete answer to \cref{q:profet-vb} in the case of line bundles, in terms of the topological torsion subgroup of the Picard variety. This gives a more geometric, more explicit, and more conceptual description than the one we previously gave in \cite[\S5]{heuer-v_lb_rigid}. Moreover, it naturally leads to a rigid moduli space of pro-finite-\'etale line bundles, without assuming that the rigid Picard functor is representable.

\subsection{The rigid analytic character variety of $\G_m$}\label{s:character-variety}
Our description of pro-finite-\'etale $v$-line bundles will be in terms of the $p$-adic analogue of the character variety, which we can define by an application of \cref{l:hat-vs-p-divisible-groups}:
\begin{Definition}\label{d:char-var}
	Let $X$ be any connected smooth proper rigid space over $(K,K^+)$. Let $x\in X( K)$ be a base point and let $\pi_1(X):=\pi^{\et}_1(X,x)$ be the \'etale fundamental group of $X$, a profinite group. Then the \textbf{continuous character variety} of $X$ is the $v$-sheaf on $\Perf_{K,v}$
	\[\HOM(\pi_1(X),\G_m)\]
	where $\pi_1(X)=\underline{\pi_1(X)}$ is considered as a profinite $v$-sheaf (\cref{d:profinite-sheaves}).
	Explicitly, the displayed sheaf sends $T\in \Perf_K$ to the set of $T$-linear homomorphisms $\underline{\pi_1(X)}\times T\to \G_m$, which we easily verify to be in bijection with the set of continuous group homomorphisms \[\Hom_{\cts}(\pi_1(X),\O^\times(T)).\]
\end{Definition}
\begin{Lemma}\label{l:cts-char-variety}
	The continuous character variety is representable by a rigid analytic group whose identity component is non-canonically isomorphic to $\wh{\G}_m^d$ for $d=\dim_{\Q_p}H^1_{\et}(X,\Q_p)$ (here $\wh{\G}_m$ is the open unit disc around $1$). Its group of connected components is torsion.
\end{Lemma}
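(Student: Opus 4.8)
The plan is to reduce the statement to an application of Proposition~\ref{l:hat-vs-p-divisible-groups} together with the structure of the (pro)finite group $\pi_1(X)$. First I would observe that $\pi_1(X)$ fits into a short exact sequence $1\to \pi_1(X)^{(p')}\to \pi_1(X)\to \pi_1(X)_p\to 1$, or more simply: by continuity, any homomorphism $\pi_1(X)\to \O^\times(T)$ must kill an open subgroup, so $\HOM(\pi_1(X),\G_m)=\twolim_{U}\HOM(\pi_1(X)/U,\G_m)$ where $U$ ranges over open normal subgroups. Hence it suffices to treat $\HOM(Q,\G_m)$ for $Q$ a finite group, where $\HOM(Q,\G_m)=\prod_{i}\G_m[n_i]$ after writing $Q^{\mathrm{ab}}\cong\bigoplus_i\Z/n_i$ (homomorphisms out of $Q$ factor through $Q^{\mathrm{ab}}$); this is visibly a finite disjoint union of copies of the roots of unity $\mu_{n_i}\subseteq\G_m$, in particular a finite étale group scheme over $K$, and the transition maps in the $2$-colimit are closed immersions. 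The issue is that the colimit is filtered but the maps need not be \emph{open}, so the colimit is not obviously a rigid space; this is exactly the point that the continuous condition is meant to resolve.

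The key structural input is the abelianised pro-$p$ part. Let $\pi_1(X)^{\mathrm{ab}}$ be the abelianisation; by the $p$-adic comparison theorem (or directly, the Hodge--Tate decomposition), $H^1_{\et}(X,\Z_p)$ is a finitely generated $\Z_p$-module of rank $d=\dim_{\Q_p}H^1_{\et}(X,\Q_p)$, and $\HOM(\pi_1(X),\G_m)=\HOM(\pi_1(X)^{\mathrm{ab}},\G_m)$ since $\G_m$ is abelian. Writing $\pi_1(X)^{\mathrm{ab}}=\Z_p^{\,d}\times T_p\times T^{(p')}$ where $T_p$ is a finite $p$-group and $T^{(p')}$ is the prime-to-$p$ part (a profinite group of order prime to $p$), I would use that $\HOM(-,\G_m)$ turns this product into a product. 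For the $\Z_p^{\,d}$ factor, Lemma~\ref{l:underline-is-left-adjoint} and Proposition~\ref{l:hat-vs-p-divisible-groups}.1 give $\HOM(\Z_p,\G_m)=\G_m\tpt=\wh\G_m$ (Example~\ref{ex:interpretation-of-wh B-for-B-of-good-reduction}\eqref{enum:example-G_m}), so this factor contributes $\wh\G_m^{\,d}$, which is precisely the asserted identity component. For the finite factors $T_p$ and $T^{(p')}$, $\HOM(-,\G_m)$ gives finite étale group schemes over $K$ (disjoint unions of $\mu_N$'s), hence a finite group scheme; for $T^{(p')}$ I would invoke Lemma~\ref{l:coprime-to-p-HOM} to see the pro-system of $\HOM(H_i,\G_m)$ stabilises, so the colimit really is finite. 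Combining, $\HOM(\pi_1(X),\G_m)\cong\wh\G_m^{\,d}\times(\text{finite étale }K\text{-group})$, which is a rigid analytic group with identity component $\wh\G_m^{\,d}$ and finite, hence torsion, group of connected components.

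The main obstacle I anticipate is \emph{representability of the colimit as a rigid space}: a filtered $2$-colimit of rigid spaces along closed immersions is not automatically rigid analytic, and one must genuinely use the continuity/finiteness to cut it down to a finite stage — this is why the decomposition of $\pi_1(X)^{\mathrm{ab}}$ into a $\Z_p^{\,d}$-part plus a \emph{finite} torsion complement (using finite generation of $H^1_{\et}(X,\Z_p)$, which is where properness of $X$ enters) is essential, and Lemma~\ref{l:coprime-to-p-HOM} is doing real work for the prime-to-$p$ part. A secondary point to be careful about: the non-canonical isomorphism of the identity component with $\wh\G_m^{\,d}$ depends on the choice of splitting $H^1_{\et}(X,\Z_p)\cong\Z_p^{\,d}\oplus(\text{torsion})$; I would state it exactly as "non-canonically isomorphic," matching the lemma. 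Finally, to identify the identity component precisely (rather than up to isogeny), one checks that $\HOM(\Z_p,\G_m)=\wh\G_m$ is connected, so the connected component of $\HOM(\pi_1(X),\G_m)$ is exactly the image of $\HOM(\pi_1(X)^{\mathrm{ab}}\otim_{\Z_p}\text{(free part)},\G_m)$, giving $\wh\G_m^{\,d}$; the remaining finite part accounts for all of $\pi_0$, which is therefore torsion.
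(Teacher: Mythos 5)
Your second paragraph is, in substance, the paper's own argument: reduce to the abelianisation $\pi^{\mathrm{ab}}$, split off the maximal torsion-free pro-$p$ quotient (a free $\Z_p$-module of rank $d$ by finiteness of $H^1_{\et}(X,\F_p)$), identify $\HOM(\Z_p,\G_m)=\G_m\tpt=\wh\G_m$ via \cref{l:hat-vs-p-divisible-groups} and \cref{ex:interpretation-of-wh B-for-B-of-good-reduction}, and treat the remaining profinite factor with \cref{l:coprime-to-p-HOM}. However, two of your steps are wrong. First, the opening claim that ``by continuity, any homomorphism $\pi_1(X)\to\O^\times(T)$ must kill an open subgroup'' is false: $\O^\times(T)$ is not discrete, and for instance $\Z_p\to 1+\m$, $1\mapsto 1+p$, is continuous and injective. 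This is precisely why the identity component is $\wh\G_m^{\,d}$ rather than a point; the locally constant reduction is valid only for the prime-to-$p$ part, and that is exactly the content of \cref{l:coprime-to-p-HOM}. You do abandon this route in your second paragraph, but the pivot is forced by the falsity of the premise, not by the representability subtlety you describe.

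Second, \cref{l:coprime-to-p-HOM} does not say that the system $\HOM(H_i,\G_m)$ stabilises; it identifies $\HOM(H,\G_m)$ with the filtered colimit, which may be a genuinely infinite increasing union (take $H=\whZ^{(p)}$: the colimit is the infinite discrete group of prime-to-$p$ roots of unity). For a general smooth proper rigid space, $\pi_1(X)$ need not be topologically finitely generated, so the component group is torsion but possibly infinite --- which is exactly why the lemma asserts ``torsion'' rather than ``finite''. (The paper's remark after the lemma records that finiteness does hold when $\pi_1(X)$ is the profinite completion of a finitely generated group, e.g.\ for projective $X$.) Correspondingly, representability does not require cutting the colimit down to a finite stage: the colimit is an increasing union of clopen subspaces, namely a disjoint union of translates of $\wh\G_m^{\,d}$ indexed by a torsion group, and such a disjoint union is a rigid analytic group whether or not the index set is finite. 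With these two corrections your argument coincides with the paper's proof.
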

If $\pi_1(X)$ is the profinite completion of a finitely generated group (e.g.\ if $X$ is projective), then the proof shows  that $\HOM(\pi_1(X),\G_m)$ is isomorphic to a finite disjoint union of copies of $(\G_m^{\tt})^d$. Here the disjoint union accounts for torsion in the N\'eron--Severi group.
\begin{proof}
	Any continuous character $\pi_1(X)\to \O^\times(Y)$ for any perfectoid space $Y$ factors through the maximal abelian quotient $\pi^{\mathrm{ab}}$ of $\pi_1(X)$, so we have
	\[\HOM(\pi_1(X),\G_m)=\HOM(\pi^{\mathrm{ab}},\G_m).\]
	 Let $T$ be the maximal torsionfree pro-$p$-quotient of $\pi^{\mathrm{ab}}$. By \cite[Lemma~4.11]{heuer-v_lb_rigid}, $T$ is a finite free $\Z_p$-module of rank $d$ (the finiteness is based on \cite[Theorem~1.1]{Scholze_p-adicHodgeForRigid}). Thus 
	\[\pi^{\mathrm {ab}}=T\times N\] for some profinite group $N=\varprojlim N_i$ which is coprime-to-$p$ up to a $p$-torsion factor. Hence
	\[ \HOM(\pi_1(X),\G_m)=\HOM(T,\G_m)\times \HOM(N,\G_m)\cong\wh \G_m^d\times \varinjlim\HOM(N_i,\G_m)\]
	by \cref{l:hat-vs-p-divisible-groups} and \cref{l:coprime-to-p-HOM}.
\end{proof}

\subsection{Diamantine Picard functors}
The second ingredient in our description of pro-finite-\'etale line bundles are the diamantine Picard functors introduced in \cite{heuer-diamantine-Picard}. We therefore begin the discussion by recalling the definition and some basic properties:
\begin{Definition}
	Let $\pi:Y\to \Spa(K,K^+)$ be any locally spatial diamond and consider the associated morphism of big \'etale sites $\pi_{\et}:\LSD_{Y,\et}\to \Perf_{K,\et}$. Then we call
\[\uP_{Y,\et}:=R^1\pi_{\et\ast}\G_m:\Perf_{K,\et}\to \mathrm{Ab}\]
the \'etale diamantine Picard functor, where $\mathrm{Ab}$ is the category of abelian groups. Explicitly, $\uP_{Y,\et}$ is the \'etale sheafification of the functor on $\Perf_K$ that sends
$T\mapsto \Pic_{\et}(Y\times T)$.

Second, there is also a $v$-Picard functor which instead parametrises $v$-line bundles, namely
\[\uP_{Y,v}:=R^1\pi_{v\ast}\G_m:\Perf_{K,v}\to \mathrm{Ab},\]
the $v$-sheafification of
$T\mapsto \Pic_{v}(Y\times T)$ where $T$ ranges through perfectoid spaces over $K$.
\end{Definition}

We are most interested in the case that $Y=X$ is a smooth proper rigid space as before. In this case,
the two main results of \cite{heuer-diamantine-Picard} about these functors are as follows: We first proved that $\uP_{X,\et}$ is the ``diamondification'' of the classical rigid analytic  Picard functor defined on smooth rigid analytic test objects \cite[Theorem~1.1]{heuer-diamantine-Picard}. In particular, if the latter is represented by some rigid group $G$ (which conjecturally is always the case, and this is known e.g.\ if $X$ is projective), then  $\uP_{X,\et}$ is represented by $G^\diamondsuit$. We therefore drop the additional $-^\diamondsuit$ used in \cite{heuer-diamantine-Picard} from notation and write $\uP_{X,\et}$ instead of $\uP^\diamondsuit_{X,\et}$.

Second, we proved the following geometric version of the main theorem of \cite{heuer-v_lb_rigid}:

\begin{Theorem}[{\cite[Theorem~2.7]{heuer-diamantine-Picard}}]\label{t:representability-of-v-Picard-functor}
	There is a natural short exact sequence  of abelian sheaves on $\Perf_{K,\et}$, functorial in $X$,
		\begin{equation}\label{seq:Picv-ses}
			0\to \uP_{X,\et}\to \uP_{X,v}\xrightarrow{\HTlog}  H^0(X,\Omega^1_X(-1))\otimes_K \G_a\to 0.
		\end{equation}
	In particular, $\uP_{X,v}$ is represented by a rigid group whenever $\uP_{X,\et}$ is. 
\end{Theorem}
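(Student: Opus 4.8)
The plan is to establish the exact sequence \eqref{seq:Picv-ses} by reducing it to the already-known exact sequence \eqref{eq:pic-HT-seq} on $K$-points and then propagating it to all perfectoid test objects $T\in\Perf_K$ by base change. First I would recall the construction of the Hodge--Tate map: for a smooth proper rigid space $X$ over $K$, the $v$-cohomology of $\G_m$ relative to the \'etale cohomology is governed by the (multiplicative) Hodge--Tate sequence, and taking $R^1$ of the composition $X_v\to X_\et\to \Spa(K,K^+)$ produces a natural transformation $\uP_{X,v}\to H^0(X,\Omega^1_X(-1))\otimes_K\G_a$ whose kernel contains $\uP_{X,\et}$; this is the sheafified form of the map denoted $\HTlog$ in \eqref{eq:pic-HT-seq}. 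The key input from \cite{heuer-v_lb_rigid} is that on the level of $K$-points (more generally, after base change along any $T=\Spa(S,S^+)\in\Perf_K$, applied to $X\times T$ in place of $X$) this sequence is short exact, and that it is functorial.

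The main steps, in order, would be: (1) Construct $\HTlog$ as a morphism of \'etale sheaves on $\Perf_K$, using the Leray spectral sequence for $X_v\to X_\et$ and the local structure of $v$-line bundles established in the prequels; here one must check that the target is correctly $H^0(X,\Omega^1_X(-1))\otimes_K\G_a$ as an \'etale sheaf, i.e.\ that $H^0(X\times T,\Omega^1(-1)) = H^0(X,\Omega^1(-1))\otimes_K\O(T)$, which follows from flat base change for coherent cohomology of the proper map $X\to\Spa(K)$ together with the fact that $T$ is perfectoid over $K$. (2) Identify the kernel of $\HTlog$ with $\uP_{X,\et}$: the inclusion $\uP_{X,\et}\hookrightarrow\ker(\HTlog)$ is clear since \'etale line bundles have vanishing Higgs field, and the reverse inclusion is exactly the statement that a $v$-line bundle with trivial $\HTlog$-image is \'etale-locally \'etale, which is the content of \eqref{eq:pic-HT-seq} applied over each $T$. (3) Prove surjectivity of $\HTlog$ as a map of \'etale sheaves: it suffices to show surjectivity after pulling back to a cover, and since \eqref{eq:pic-HT-seq} (in its relative form over $T$) is already surjective on sections over each affinoid perfectoid $T$, the sheaf map is surjective. (4) Deduce the representability statement: if $\uP_{X,\et}$ is represented by a rigid group $G$, then \eqref{seq:Picv-ses} exhibits $\uP_{X,v}$ as an extension of the vector group $H^0(X,\Omega^1(-1))\otimes_K\G_a$ by $G$; since $\mathrm{Ext}^1$ in the relevant category of rigid groups (or the fact that such extensions of a cohomologically trivial affine group are classified and split, or simply that one can build the total space as a torsor which, being under a vector group over a rigid base, is representable) the extension is representable by a rigid group — concretely it is a $\G_a$-power-torsor over $G$ and hence an iterated affine-space bundle, which is a rigid space.

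The step I expect to be the main obstacle is (3), the surjectivity of $\HTlog$ \emph{as \'etale sheaves on $\Perf_K$} rather than merely on $K$-points: one needs that every local section of $H^0(X,\Omega^1(-1))\otimes_K\G_a$ over a perfectoid $T$ lifts, \'etale-locally on $T$, to a $v$-line bundle on $X\times T$. Over a point this is the nontrivial existence statement from \cite{heuer-v_lb_rigid} (built from the Hodge--Tate decomposition and an explicit construction of $v$-line bundles from Higgs data), and in families one must check that the construction is compatible with base change $T'\to T$ and that no higher obstruction in $R^2\pi_{\et\ast}\G_m$ intervenes — here one uses that the relevant obstruction group vanishes or that the construction is genuinely functorial in $X$ and in the test object, so that the fibrewise result of \cite{heuer-v_lb_rigid} globalises. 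Once surjectivity is in hand, the representability conclusion is essentially formal, since extensions of a vector group by a rigid group are again rigid.
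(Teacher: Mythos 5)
This statement is not proved in the paper at all: it is imported verbatim from the prequel as \cite[Theorem~2.7]{heuer-diamantine-Picard}, so there is no in-paper argument to compare yours against. Judged on its own terms, your outline has the right overall shape (construct $\HTlog$ relatively, identify the kernel, check surjectivity \'etale-locally, deduce representability), and you are right that the relative surjectivity is a genuine issue. Two concrete caveats, though.

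First, several steps you treat as standard are precisely the technical core of the cited prequel and cannot be black-boxed in this setting. The product $X\times T$ for $T$ affinoid perfectoid is not a rigid space, so ``flat base change for coherent cohomology of the proper map $X\to\Spa(K)$'' is not available off the shelf; one needs the almost-K\"unneth and base-change statements for $\O^{+}/p^k$-cohomology in families (the results quoted in this paper as \cite[Propositions~4.2, 4.6, 4.12]{heuer-diamantine-Picard}), and these feed into both the construction of $\HTlog$ over $T$, the identification of its kernel, and the surjectivity. So the ``fibrewise result globalises'' step is where essentially all of the work of the prequel lives, not just step (3).

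Second, your representability argument inverts the torsor structure. The sequence exhibits $\uP_{X,v}$ as an \'etale $\uP_{X,\et}$-torsor over the affine space $\mathcal A=H^0(X,\Omega^1_X(-1))\otimes_K\G_a$; it is \emph{not} a ``$\G_a$-power-torsor over $G$'' (there is no map $\uP_{X,v}\to\uP_{X,\et}$), and indeed \cref{c:torsors-not-split} shows the extension never splits globally when $H^0(X,\Omega^1_X)\neq 0$, so one cannot argue via a global splitting or a vanishing $\Ext^1$. The correct route — the one used implicitly later in this paper, in the proof of \cref{t:geometric-Simpson} via \cite[Theorem~2.7.3]{heuer-diamantine-Picard} — is that the sequence \emph{does} split over an explicit open subgroup $\mathcal A^{+}\subseteq\mathcal A$, so $\uP_{X,v}$ is covered by representable opens $\uP_{X,\et}\times(\text{translates of }\mathcal A^{+})$; alternatively one invokes effectivity of \'etale descent for torsors under smooth rigid groups. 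Either way, this step needs an actual input, not the formal remark you give.
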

Note that in terms of the language introduced in the introduction, the last term can be interpreted as the $p$-adic Hitchin base $\mathcal A$ in the context of line bundles.

\subsection{The pro-finite-\'etale universal cover}\label{s:profet-cover}
To make the connection to \cref{q:profet-vb}, we first reformulate the question slightly: Recall that we have chosen a base point $x\in X(K)$. We consider the universal pro-finite-\'etale cover of $X$, defined as the spatial diamond
\[ \wt \pi:\wt X:=\varprojlim_{X'\to X}X'\to \Spa(K)\]
where the index category is given by connected finite \'etale covers $(X',x')\to (X,x)$ with $x'\in X'(K)$ a lift of $x\in X(K)$. This is a pro-finite-\'etale $\pi_1(X):=\pi^{\et}_1(X,x)$-torsor. We refer to \cite[Definition~3.8]{heuer-v_lb_rigid} for some more background.

By \cite[Lemma 4.8]{heuer-v_lb_rigid}, we see that a vector bundle on $X$ is pro-finite-\'etale if and only if it becomes trivial after pullback to $\wt X\to X$.
Moreover, we argued in \cite[\S5.2]{heuer-v_lb_rigid} that $\wt X\to X$ plays a similar role for the $p$-adic Simpson correspondence as the complex universal cover plays in the complex Simpson correspondence: There is an equivalence of categories
\begin{alignat*}{2}
	\Big\{\begin{array}{@{}c@{}l}\text{\normalfont pro-finite-\'etale }\\\text{\normalfont $v$-vector bundles on $X$} \end{array}\Big\}&\isomarrow&& \Big\{\begin{array}{@{}c@{}l}\text{\normalfont continuous representations of $\pi_1^{\et}(X,x)$}\\\text{on finite dimensional $K$-vector spaces}\end{array}\Big\}\\
	V&\mapsto&&V(\wt X)
\end{alignat*}
since $\wt X\to X$ is a $\pi_1(X)$-torsor and $\O(\wt X)=K$. 

\medskip

In order to describe the left hand side for line bundles, we now use the diamantine Picard functor $\uP_{\wt X,\et}$ of the universal cover. Namely,
the main technical result of this article is that we can describe pro-finite-\'etale line bundles  in terms of the topological torsion subsheaf 
\[ \uP_{X,\et}^\tt:=(\uP_{X,\et})^\tt\]
as defined in \S\ref{s:def-of-tt} applied to the diamantine Picard functor of $X$, and that this moreover has a close relation to the character variety from \cref{d:char-var}.
\begin{Theorem}\label{t:tt-torsion-is-1+m-torsors}
	Let $\pi:X\to \Spa(K,K^+)$ be a connected smooth proper rigid space.
	\begin{enumerate}
	\item There is a short exact sequence of abelian sheaves on $\Perf_{K,\et}$
	\[ 0\to \uP_{X,\et}^\tt\to \uP_{X,\et}\to \uP_{\wt X,\et},\]
	in which the first term can equivalently be described as 
	$\uP_{X,\et}^\tt=R^1\pi_{\et\ast}(\G_m^\tt)$.
	\item In the $v$-topology, we similarly have a short exact sequence
	\[ 0\to \uP_{X,v}^\tt\to \uP_{X,v}\to \uP_{\wt X,v},\]
	in which the first term can be naturally identified with the character variety
	\[ \uP_{X,v}^\tt=R^1\pi_{v\ast}(\G_m^\tt)=\HOM(\pi_1(X),\G_m).\]
	\end{enumerate}
\end{Theorem}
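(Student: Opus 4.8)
The plan is to deduce both short exact sequences from the Cartan--Leray (or Leray) spectral sequence of the pro-finite-\'etale cover $\wt\pi\colon\wt X\to X$, which is a $\pi_1(X)$-torsor, combined with the explicit description of $\G_m^\tt$ and $\G_m\tpt$ from \cref{l:hat-vs-p-divisible-groups} and the characterisation of pro-finite-\'etale line bundles in terms of triviality on $\wt X$. First I would set up, for $\tau\in\{\et,v\}$ and a perfectoid test object $T$, the five-term exact sequence of low-degree terms coming from the Cartan--Leray sequence for $\wt X\times T\to X\times T$ with coefficients in $\G_m$; after sheafifying in $T$ and dividing out $\Pic_\tau(T)$ this reads
\[
0\to H^1(\pi_1(X),\O^\times(\wt X\times T))\to \uP_{X,\tau}(T)\to \uP_{\wt X,\tau}(T),
\]
so the kernel of $\uP_{X,\tau}\to\uP_{\wt X,\tau}$ is exactly the subsheaf of line bundles that become trivial on $\wt X$. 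The first task is therefore to identify this kernel with $\uP_{X,\tau}^\tt$, i.e.\ with $R^1\pi_{\tau\ast}(\G_m^\tt)$.

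The key computational input is that a line bundle on $X\times T$ is trivialised by the pro-finite-\'etale cover if and only if it comes from a $\G_m^\tt$-torsor: this is the analogue for families of \cite[Theorem~5.7.3]{heuer-v_lb_rigid}, and it should follow from the explicit description $\G_m^\tt=\mu(K)\wh\G_m$ of \cref{l:hat-vs-p-divisible-groups}.3 together with the fact that $\O(\wt X)=K$ and that $\wh\G_m$-torsors over $\wt X$ (hence $(1+\m)$-valued cocycles) can be trivialised along the pro-$p$ part of $\pi_1(X)$, while $\mu_N$-torsors are killed by the coprime-to-$p$ part; one uses here that $\wt X\to X$ factors through every finite \'etale cover. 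Concretely I would argue that the natural map $R^1\pi_{\tau\ast}(\G_m^\tt)\to R^1\pi_{\tau\ast}\G_m=\uP_{X,\tau}$ is injective (from the long exact sequence associated to $0\to\G_m^\tt\to\G_m\to\G_m/\G_m^\tt\to 0$, using that $\pi_\ast(\G_m/\G_m^\tt)$ is torsionfree so that the connecting map from $\pi_\ast$ of the quotient vanishes after one checks $R^0$), and that its image is precisely the subsheaf of classes dying on $\wt X$: one inclusion is because $\G_m^\tt$-torsors pull back to trivialisable torsors on $\wt X$, the other because a class in the kernel admits, locally, a cocycle with values in $\G_m^\tt$ after passing far enough up the tower. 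Then I would separately identify $R^1\pi_{\tau\ast}(\G_m^\tt)$ with $(\uP_{X,\tau})^\tt$: since $\G_m^\tt=\HOM(\whZ,\G_m)$ is strongly topologically torsion, $\HOM(\whZ,-)$ commutes with $\pi_\ast$ and preserves injectives by \cref{l:preserve-inj}, so $R^1\pi_{\tau\ast}(\G_m^\tt)=R^1\pi_{\tau\ast}\HOM(\whZ,\G_m)$ maps to $\HOM(\whZ,R^1\pi_{\tau\ast}\G_m)=\HOM(\whZ,\uP_{X,\tau})$, and the evaluation-at-$1$ map exhibits its image inside $\uP_{X,\tau}$ as $\uP_{X,\tau}^\tt$; the argument of \cref{l:cohomology-of-stt-is-tt} shows this is compatibly an isomorphism onto the topological torsion subsheaf. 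This gives part (1) over $\Perf_{K,\et}$ and, verbatim, part (2) over $\Perf_{K,v}$.

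Finally, for the last identification in part (2), I would feed in \cref{t:Pictt-is-profet-intro}-style input: the Cartan--Leray sequence for the $\pi_1(X)$-torsor $\wt X\to X$ with coefficients $\G_m$ in the $v$-topology, using $\O^\times(\wt X\times T)=\O^\times(T)$ for connected $T$ (since $\O(\wt X)=K$ and $\wt X$ is connected), gives
\[
R^1\pi_{v\ast}(\text{classes trivial on }\wt X)=R^1\Gamma(\pi_1(X),\G_m)=\HOM(\pi_1(X),\G_m),
\]
where the last equality is the observation that continuous group cohomology $H^1(\pi_1(X),\O^\times(T))=\Hom_\cts(\pi_1(X),\O^\times(T))$ as the action is trivial, sheafified to give the internal Hom $v$-sheaf of \cref{d:char-var}. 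Combining with the already-established $\uP_{X,v}^\tt=\ker(\uP_{X,v}\to\uP_{\wt X,v})$ yields $\uP_{X,v}^\tt=\HOM(\pi_1(X),\G_m)$. The main obstacle I anticipate is the family version of the trivialisation statement --- that a $v$-line bundle on $X\times T$ is pro-finite-\'etale (becomes trivial on $\wt X\times T$) exactly when it has a $\G_m^\tt$-reduction --- since this requires controlling cocycles uniformly over the perfectoid base $T$ and carefully separating the $\wh\G_m$ (pro-$p$) and $\mu_N$ (prime-to-$p$) contributions; the $\et$ versus $v$ distinction should not cause extra trouble here because $\G_m^\tt\hookrightarrow\G_m$ and the relevant $\pi_\ast$, $R^1\pi_\ast$ behave the same way in both topologies once \cref{t:representability-of-v-Picard-functor} is invoked.
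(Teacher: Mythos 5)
Your overall architecture matches the paper's: you identify $\ker(\uP_{X,\tau}\to\uP_{\wt X,\tau})$ with $R^1\pi_{\tau\ast}\G_m^\tt$ using the long exact sequence of $0\to\G_m^\tt\to\G_m\to\G_m/\G_m^\tt\to 0$ together with the vanishing of $R^1\wt\pi_{\tau\ast}\G_m^\tt$ and the injectivity of $R^1\pi_{\tau\ast}(\G_m/\G_m^\tt)\to R^1\wt\pi_{\tau\ast}(\G_m/\G_m^\tt)$ (your ``cocycle after passing far enough up the tower'' step; in the paper this is \cref{l:H^1(X,O/O^tt)->H^1(wtX,O/O^tt)-is-isom}, proved by Cartan--Leray for the sheaf of $\Q$-vector spaces $\G_m/\G_m^\tt$), and you obtain the character variety from Cartan--Leray for $\G_m$ exactly as in the paper.

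The genuine gap is in the step $R^1\pi_{\tau\ast}(\G_m^\tt)=\uP_{X,\tau}^\tt$. Your argument via \cref{l:preserve-inj} and \cref{l:cohomology-of-stt-is-tt} only gives one inclusion: since $\G_m^\tt=\HOM(\whZ,\G_m)$ is strongly topologically torsion, $R^1\pi_{\tau\ast}\G_m^\tt$ is topologically torsion and hence contained in $\uP_{X,\tau}^\tt$. For the reverse inclusion you would need the composite $R^1\pi_{\tau\ast}\HOM(\whZ,\G_m)\to\HOM(\whZ,R^1\pi_{\tau\ast}\G_m)\xrightarrow{\mathrm{ev}}\uP_{X,\tau}$ to have the \emph{same} image as the evaluation map itself, and nothing formal gives this: a priori there could be a homomorphism $\whZ\times T\to\uP_{X,\tau}$ whose value at $1$ does not lift to a $\G_m^\tt$-torsor on $X\times T$. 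The paper closes exactly this gap with the key computation $\HOM(\whZ,\uP_{\wt X,\et})=1$ (\cref{p:Pic-wt-X-is-top-tf}): applying the left-exact functor $\HOM(\whZ,-)$ to the already-established sequence $0\to R^1\pi_{\tau\ast}\G_m^\tt\to\uP_{X,\tau}\to\uP_{\wt X,\tau}$ then shows that every homomorphism $\whZ\times T\to\uP_{X,\tau}$ factors through the kernel, whence $\uP_{X,\tau}^\tt\subseteq R^1\pi_{\tau\ast}\G_m^\tt$. That vanishing is itself a nontrivial calculation --- it proceeds by computing $\uP_{\whZ^k\times\wt X,\et}$ via \cref{l:pro-finite-sets-in-uPic} and reducing to $\Hom_{\lc}(\whZ,V)=0$ for a $\Q$-vector space $V$ --- and it is precisely the point where one uses that the Picard functor is defined on perfectoid test objects such as $\whZ$. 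Your proposal never states or proves this (or any equivalent ``$\uP_{\wt X}$ admits no nonzero maps from $\whZ$'' statement), so as written the identification of the kernel with the topological torsion subsheaf, and hence both parts of the theorem, is incomplete.
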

We will prove \cref{t:tt-torsion-is-1+m-torsors} over the course of this section. Before, we note that this gives the desired answer to Question~\ref{q:profet-vb} for line bundles: The pro-finite-\'etale line bundles on $X$ are precisely the topological torsion $K$-points of the (classical) rigid-analytic Picard variety:
\begin{Corollary}\label{c:tt-descr-of-profet-line-bundles}
	Assume that the rigid analytic Picard functor of $X$ is represented by a rigid group $G$. Then there is a left exact sequence
	\[ 0\to G(K)^\tt\to \Pic_{\et}(X)\to \Pic_{\et}(\wt X).\]
	where $G(K)^\tt\subseteq G(K)$ is the subgroup of elements $x$ such that $x^{n!}\to 1$ for $n\to \infty$. 
\end{Corollary}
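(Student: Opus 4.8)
The plan is to derive \cref{c:tt-descr-of-profet-line-bundles} as a straightforward specialisation of \cref{t:tt-torsion-is-1+m-torsors}.(1) to $K$-points, using the representability hypothesis to identify the abstract sheaf-theoretic topological torsion subsheaf with its concrete description via \cref{l:hat-vs-p-divisible-groups}. First I would note that by \cite[Theorem~1.1]{heuer-diamantine-Picard}, the assumption that the rigid analytic Picard functor is represented by a rigid group $G$ means precisely that $\uP_{X,\et}=G^\diamondsuit$ as $v$-sheaves on $\Perf_K$. Hence, taking the topological torsion subsheaf commutes with this identification: $\uP_{X,\et}^\tt=(G^\diamondsuit)^\tt=(G^\tt)^\diamondsuit$, where the second equality is the content of \cref{l:hat-vs-p-divisible-groups}.3 (the topological torsion subsheaf of a rigid group is again represented by an open rigid subgroup).

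Next I would evaluate the short exact sequence of \'etale sheaves on $\Perf_{K,\et}$ from \cref{t:tt-torsion-is-1+m-torsors}.(1) on the object $\Spa(K,K^+)\in\Perf_K$. Since $K$ is algebraically closed (hence $(K,K^+)$ has no nontrivial \'etale covers that matter here, or more precisely taking sections over the final object is left exact), the sequence
\[ 0\to \uP_{X,\et}^\tt(K)\to \uP_{X,\et}(K)\to \uP_{\wt X,\et}(K)\]
is exact. Now I identify the three terms. For the middle term, $\uP_{X,\et}(K)=\Pic_{\et}(X)$ since the \'etale sheafification does not change the value on the algebraically closed base point and there is no nontrivial family parameter; this is part of the setup in \cite{heuer-diamantine-Picard}. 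Similarly $\uP_{\wt X,\et}(K)=\Pic_{\et}(\wt X)$. For the first term, $\uP_{X,\et}^\tt(K)=(G^\tt)^\diamondsuit(K)=G^\tt(K,K^+)=G(K)^\tt$, where the last equality is exactly the characterisation of $K$-points in \cref{l:hat-vs-p-divisible-groups}.3, namely $G(K)^\tt=\{x\in G(K)\mid x^{n!}\to 1\}$ (written multiplicatively since $G$ will be a Picard group).

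I do not expect any genuine obstacle here: the statement is essentially bookkeeping, assembling \cref{t:tt-torsion-is-1+m-torsors}.(1), the representability input of \cite{heuer-diamantine-Picard}, and the explicit description of topological torsion $K$-points from \cref{l:hat-vs-p-divisible-groups}. The only point requiring a word of care is the compatibility of the two a priori different notions of ``topological torsion'': the sheaf-theoretic one ($\im(\mathrm e_F)$, applied to $\uP_{X,\et}$) versus the group-theoretic condition $x^{n!}\to 1$ on $G(K)$. But this compatibility is precisely what \cref{l:hat-vs-p-divisible-groups}.1 and 3 establish — for a rigid group $G$ one has $G^\tt=\HOM(\whZ,G)$ and its $K$-points are the elements killed in the limit by $n!$ — so invoking that proposition closes the gap. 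Thus the corollary follows by passing to $K$-points in the exact sequence, using left-exactness of the global sections functor.
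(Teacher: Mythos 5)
Your argument is correct and coincides with the paper's own proof: invoke \cite[Theorem~1.1]{heuer-diamantine-Picard} to identify $\uP_{X,\et}$ with $G$, evaluate the left-exact sequence of \cref{t:tt-torsion-is-1+m-torsors}.1 at $(K,K^+)$-points, and use \cref{l:hat-vs-p-divisible-groups}.1 and 3 to identify $G^\tt(K)$ with $G(K)^\tt=\{x\mid x^{n!}\to 1\}$. No gaps.
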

\begin{proof}
	By  \cite[Theorem~1.1]{heuer-diamantine-Picard}, the assumption implies that also $\uP_{X,\et}$ is also represented by the rigid group $G$. The result thus follows from \cref{t:tt-torsion-is-1+m-torsors}.1 by evaluating at $K$ and using that $G^\tt(K)=G(K)^\tt$ by \cref{l:hat-vs-p-divisible-groups}.1 and 3.
\end{proof}
\begin{Remark}
	\begin{enumerate}
	\item If $K=\C_p$ and the identity component $G^0$ is abeloid, then by Lemma~\ref{l:tt-over-C_p}, $G^\tt(\C_p)=G^\tau(\C_p)$ is generated by $G^0(\C_p)$ and torsion in the N\'eron--Severi group. In particular, these are then precisely the line bundles in $\Pic(X)$ with vanishing Chern classes. However, this is no longer true over extensions of $\C_p$.
	\item Even without assuming any representability results, $\uP_{X,\et}$ is always a small \mbox{$v$-sheaf} (this follows from \cite[Theorem~7.13]{heuer-sheafified-paCS}), hence $\uP_{X,\et}(K)$  always has the natural structure of a topological space by \cite[Definition 12.8]{etale-cohomology-of-diamonds}. One can then still describe $\uP_{X,\et}^{\tt}(K)$ as those $x$ in $\uP_{X,\et}(K)$ such that $x^{n!}\to 1$ for $n\to \infty$.
	\item 	The sequence in is not in general right-exact: If $X$ is an abelian variety with good supersingular reduction $\overline{X}$, then $\Pic(\wt X)=\Pic(\overline{X})\otimes \Q$ by \cite[Theorem~4.1]{heuer-Picard-good-reduction}.
	\item 	We cannot expect $\uP_{\wt X,\et}$ to be represented by an adic group: If $\uP_{X,\et}$  and $\uP_{\wt X,\et}$ were both representable by adic groups, then the kernel of $\uP_{X,\et}\to \uP_{\wt X,\et}$ would be closed, a contradiction to \cref{l:hat-vs-p-divisible-groups} if $\uP^0_{X,\et}$ is a rigid group of dimension $>0$. In fact, we show in \cite[\S 6.5]{heuer-Picard-good-reduction} that $\uP_{\wt X,\et}$ is typically not even a diamond.
	\end{enumerate}
\end{Remark}

\subsection{Relative universal property of the universal cover}
The first step for the proof of Theorem~\ref{t:tt-torsion-is-1+m-torsors} is to show that $\G_m^{\tt}$-torsors become trivial on the pro-finite-\'etale universal cover $\wt X\to X$.
This is part of the following analogue of the statement that the complex universal cover is simply connected. We recall that we write $\wt \pi:\wt X\to \Spa(K)$ for the structure map.

\begin{Proposition}\label{p:cohomology-of-wtXxY}
	Let $\mathcal F$ be any one of the $v$-sheaves $\O^{+a}/p^k$, $\O^{+a}$, $\O$, $\O^{\flat+ a}$, $\O^\flat$, $\G_m\tpt$, $\G_m^{\tt}$, or $\Z/N\Z$ for $N\in \N$.
		Then for any affinoid perfectoid space $Y$ over $K$, we have 
		\[ \begin{tikzcd}[row sep =0.15cm, column sep =0.15cm]
			H^n_{\et}(\wt X\times Y,\mathcal F)=H^n_{\et}(Y,\mathcal F)=H^n_{v}(Y,\mathcal F)=H^n_{v}(\wt X\times Y,\mathcal F).
		\end{tikzcd}\]
	for $n\in \{0,1\}$.
		 In particular, $\pi_{\et\ast}\mathcal F=\mathcal F=\pi_{v\ast}\mathcal F$ and  $R^1\wt \pi_{\et\ast}\mathcal F=0=R^1\wt \pi_{v\ast}\mathcal F$ on $\Perf_K$.
		\end{Proposition}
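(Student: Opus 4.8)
The plan is to prove the chain of equalities degree by degree, reducing everything to known vanishing results for the sheaves involved on the pro-finite-\'etale tower. First, I would observe that for $n=0$ the statement is essentially formal: $\wt X\times Y\to Y$ has connected fibres (since $\wt X$ is connected, being a limit of connected covers) and $\O(\wt X)=K$, so $H^0$ of any of the listed sheaves over $\wt X\times Y$ agrees with $H^0$ over $Y$; the equality $H^0_{\et}=H^0_v$ holds because all the sheaves in question are $v$-sheaves and $Y$ is perfectoid. So the content is in degree $1$.

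For $n=1$, the strategy is to write $\wt X=\varprojlim_i X_i$ over the cofiltered system of connected finite \'etale covers $X_i\to X$, use the fact that for affinoid perfectoid $Y$ and any of these sheaves the cohomology commutes with this cofiltered limit (via \cite[Proposition~14.9]{etale-cohomology-of-diamonds} or the analogous almost-mathematics statements for $\O^{+a}/p^k$, together with the primitive-comparison / finiteness inputs), and then run a Cartan--Leray / Hochschild--Serre argument for the $\pi_1(X)$-torsor $\wt X\times Y\to X\times Y$. Concretely: the key point is that $H^1_{\et}(\wt X\times Y,\mathcal F)$ receives a map from $\varinjlim_i H^1_{\et}(X_i\times Y,\mathcal F)$, and one wants this colimit to be killed, i.e.\ every class dies on a further finite \'etale cover. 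For $\mathcal F=\Z/N\Z$ this is immediate from the definition of $\pi_1$ and the fact that $Y$ is perfectoid hence $\pi_1(X_i\times Y)=\pi_1(X_i)$ (a geometric-connectedness statement). For the $\O$-module sheaves $\O^{+a}/p^k$, $\O^{+a}$, $\O$ and their tilted analogues, I would invoke the almost vanishing of higher cohomology of $\O^{+a}$ on the pro-finite-\'etale tower — this is exactly the kind of statement underlying the computation $\O(\wt X)=K$ and appears in \cite{heuer-v_lb_rigid} / Scholze's primitive comparison — so that $H^1_{\et}(\wt X\times Y,\O^{+a}/p^k)=H^1_{\et}(Y,\O^{+a}/p^k)$, and then one bootstraps to $\O^{+a}$ by deriving the inverse limit over $k$ (controlling $\lim^1$ via almost-boundedness) and inverts $p$ to get $\O$. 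The tilted sheaves $\O^{\flat+a}$, $\O^\flat$ follow by the tilting equivalence. The equality with $v$-cohomology for these sheaves is then Scholze's comparison of $\et$ and $v$ cohomology for perfectoid/diamond inputs, applied to both $\wt X\times Y$ and $Y$.

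The genuinely new sheaves are $\G_m\langle p^\infty\rangle$ and $\G_m^{\tt}$, and handling these is where I expect the main obstacle to lie. By \cref{ex:interpretation-of-wh B-for-B-of-good-reduction} we have $\G_m\langle p^\infty\rangle=\wh\G_m$, the open unit disc around $1$, which sits in the exact sequence $0\to \G_m\langle p^\infty\rangle\to \G_m\to \G_m/\wh\G_m\to 0$ where the quotient is, after choosing compatible structure, built from $\uZ$ (the valuation) and $\mu$ (roots of unity); alternatively one filters $\wh\G_m$ by the subgroups $1+p^k\O^+$ whose graded pieces are almost isomorphic to $\O^{+a}/p$. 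So the plan is to reduce $H^1$ of $\G_m\langle p^\infty\rangle$ to $H^1$ of $\O^{+a}/p$-type sheaves by this filtration argument — using completeness of $\wh\G_m$-valued sections, which I would get from \cref{p:G(K)-is-complete}-style arguments relativised over $Y$, to pass to the limit — thereby inheriting the vanishing from the $\O^{+a}$ case; then $\G_m^{\tt}$ follows since $\G_m^{\tt}/\G_m\langle p^\infty\rangle$ is a colimit of the coprime-to-$p$ torsion sheaves $\mu_N$, handled like $\Z/N\Z$. The subtlety to be careful about is that the filtration on $\wh\G_m$ has infinitely many steps, so one needs a derived-limit / approximation argument (and its almost-mathematics bookkeeping) to conclude $H^1$ vanishes rather than merely that each graded piece contributes nothing — this is the step I would write out most carefully. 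Once $H^1_{\et}(\wt X\times Y,\mathcal F)=H^1_{\et}(Y,\mathcal F)$ is established for all the listed $\mathcal F$, the equality with $H^1_v$ again comes from the $\et$-versus-$v$ comparison, and the final sentence about $\wt\pi_{*}\mathcal F=\mathcal F$ and $R^1\wt\pi_{\et*}\mathcal F=R^1\wt\pi_{v*}\mathcal F=0$ is just the sheafified reformulation, obtained by letting $Y$ range over all affinoid perfectoids and noting these generate the site.
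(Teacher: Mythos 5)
Your overall architecture --- reduce along the tower $\wt X=\varprojlim X_i$, use the relative primitive comparison to kill $H^1$ of $\O^{+a}/p^k$, then d\'evissage to $\O^{+a}$, $\O$, $\O^{\flat+a}$, $\O^\flat$, and finally treat the multiplicative sheaves via the additive ones plus torsion --- is the same as the paper's. But two steps would not go through as written. First, for $\Z/N\Z$ you appeal to ``$\pi_1(X_i\times Y)=\pi_1(X_i)$''. This is false for a general affinoid perfectoid $Y$ (which can have many finite \'etale covers), and even the corrected K\"unneth statement $\pi_1^{\et}(X\times Y)\cong\pi_1^{\et}(X)\times\pi_1^{\et}(Y)$ is precisely what the paper raises only as a question, as an \emph{unproven alternative} route to this case. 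What is actually needed, and what the paper uses, is the relative statement $\wt\pi_{\et\ast}\Z/N\Z=\Z/N\Z$ and $R^1\wt\pi_{\et\ast}\Z/N\Z=0$ from \cite[Corollary~4.8]{heuer-diamantine-Picard} combined with \cite[Proposition~14.9]{etale-cohomology-of-diamonds}: the colimit $\varinjlim_i H^1_{\et}(X_i\times Y,\Z/N\Z)$ is not ``killed'', it equals $H^1_{\et}(Y,\Z/N\Z)$, because every class dies up the tower \emph{modulo classes pulled back from $Y$}. (The same caveat applies to your $n=0$ discussion: the relative statements $H^0(X'\times Y,\O^+/p^k)\aeq H^0(Y,\O^+/p^k)$ are genuine prequel results, not formal consequences of connectedness.) Second, the \'etale-versus-$v$ comparison is not a generic ``Scholze comparison for perfectoid inputs'': $\wt X\times Y$ is in general only a diamond (e.g.\ $\wt{\P^1}=\P^1$), and for such spaces $H^1_v(-,\O)\neq H^1_{\et}(-,\O)$ in general --- that discrepancy is the whole point of \cref{eq:pic-HT-seq}. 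The paper computes the $v$-cohomology first, via \cref{l:diam-Prop-4.12.2} (whose content already includes the \'etale-vs-$v$ comparison for products of a smooth rigid space with a perfectoid space, a nontrivial input from \cite{heuer-diamantine-Picard}), and only then deduces the \'etale statement from the injection $H^1_{\et}\hookrightarrow H^1_v$; your order of argument presupposes the comparison rather than proving it.

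For $\G_m\tpt$ your route genuinely diverges from the paper's. You filter $\wh\G_m$ by congruence subgroups $1+p^k\O^+$ and propose an infinite d\'evissage with derived-limit and almost-mathematics bookkeeping; note also that these subgroups do not exhaust the open disc $\wh\G_m$, so you would additionally need a colimit over radii before the filtration even starts. The paper avoids all of this with the logarithm sequence $0\to\mu_{p^\infty}\to\wh\G_m\xrightarrow{\log}\G_a\to 0$, which is exact as a sequence of $v$-sheaves since $K$ is algebraically closed of characteristic $0$ and $[p]$ is surjective on $\G_m$ (\cref{l:hat-vs-p-divisible-groups}); the case of $\wh\G_m$ in degrees $0$ and $1$ then follows from the already-established cases of $\mu_{p^\infty}\cong\varinjlim\Z/p^k\Z$ and $\O$ by the long exact sequence and the five lemma, and $\G_m^{\tt}$ follows by adjoining prime-to-$p$ roots of unity exactly as you say. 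Your filtration argument could likely be pushed through, but the step you yourself flag as the one ``to write out most carefully'' simply disappears once one uses the logarithm.
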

	\begin{Remark}
		The restriction to $n\in \{0,1\}$ is necessary in general:
		The Proposition does not hold for $n=2$ for any of the sheaves in the example of $X=\P^1=\wt X$ and $Y=\Spa(K)$.
	\end{Remark}
	\begin{proof}
		This is a relative version of \cite[Proposition~3.10]{heuer-v_lb_rigid}, and the technical results of \cite[\S4]{heuer-diamantine-Picard} enable us to essentially follow the same line of argument:
		We start with $\mathcal F=\O^+/p^k$. By \cite[Proposition~4.12.2]{heuer-diamantine-Picard}, which we recall below as \cref{l:diam-Prop-4.12.2},
		\[ H^1_v(\wt X\times Y,\O^+/p^k)\aeq \varinjlim_{X'\to X}H^1_{\et}(X'\times Y,\O^+/p^k),\]
		where the $X'$ are as in the definition of $\wt X$.
		By \cite[Corollary~4.6]{heuer-diamantine-Picard}, this equals
		\[
		 ...\aeq \varinjlim_{X'\to X} H^1_\et(X',\Z/p^k)\otimes \O^+(Y)/p^k=0\]
		which vanishes because every class in $H^1_\et(X',\Z/p^k)$ is trivialised by a connected finite \'etale cover of $X'$ and thus of $X$. This gives the case of $n=1$. 
		
		The case $n=0$ follows from \cref{l:diam-Prop-4.12.2} and \cite[Proposition~4.2.2]{heuer-diamantine-Picard} which says that
		\[H^0(X'\times Y,\O^+/p^k)\aeq H^0(Y,\O^+/p^k).\]
		
		Next, consider $\mathcal F=\mathcal O^+$: The last equation implies that 
		$R^1\varprojlim_{k\in \N} H^0(\wt X\times Y,\O^+/p^k)\aeq 0$, so we deduce from the fact that the $v$-site is replete and \cite[Proposition 3.1.10]{bhatt-scholze-proetale} that
		\[H^1_v(\wt X\times Y,\O^+)\aeq \varprojlim_{k\in \N} H^1_v(\wt X\times Y,\O^+/p^k)\aeq 0.\]
		The case of $\O$ follows. A similar limit argument gives the cases of $\O^{\flat+a}$ and then $\O^\flat$. 
		Since we have an injection $H^1_{\et}(\wt X\times Y,\mathcal F)\hookrightarrow H^1_{v}(\wt X\times Y,\mathcal F)$, the \'etale case follows for all of these.
		
		For $\mathcal F=\Z/N\Z$, we know that \'etale and $v$-topology agree. By \cite[Proposition~14.9]{etale-cohomology-of-diamonds},
		\[ H^n_{\et}(\wt X\times Y,\Z/N\Z)=\varinjlim H^n_{\et}(X'\times Y,\Z/N\Z).\]
		For $n=0$, it follows from  \cite[Corollary~4.8]{heuer-diamantine-Picard} that $\wt\pi_{\ast}\Z/N\Z=\Z/N\Z$. To see the case of $n=1$, it thus suffices by the Leray sequence to prove that $R^1\wt\pi_{\et\ast}\Z/N\Z=0$, which follows from the same cited result in the colimit over the $X'$.
		
		Finally, the case of $\G_m\tpt$ follows from that of $\mu_{p^\infty}\cong \varinjlim \Z/p^k\Z$ (as $K$ is algebraically closed) and that of $\O$ by the logarithm sequence, and similarly for $\G_m^\tt$ where we also include coprime-to-$p$ torsion. This also shows that \'etale and $v$-cohomology agree in this case.
	\end{proof}

\begin{Lemma}\label{l:diam-Prop-4.12.2}
	Let $Y$ be any perfectoid space over $K$ and let $\wt X=\varprojlim_{i\in I} X_i$ be a diamond which is a limit of smooth qcqs rigid spaces over $K$ with finite \'etale transition maps. Then for the $v$-sheaves $\mathcal F=\O^+/p$ or $\mathcal F=\G_m/\G_m^{\tt}$, we have for $n\in\{0,1\}$:
	\[H^n_{v}(\wt X\times Y,\mathcal F)=\textstyle\varinjlim_{i \in I} H^n_{\et}(X_i\times Y,\mathcal F).\]
	Moreover, both sides stay the same when we exchange the $v$-topology for the \'etale topology.
\end{Lemma}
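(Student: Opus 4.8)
The strategy is to reduce both assertions — the colimit description and the invariance under passing from the $v$-topology to the \'etale topology — to standard facts about almost cohomology and about how cohomology of qcqs diamonds interacts with cofiltered limits with finite \'etale transition maps. First I would treat $\mathcal F=\O^+/p$. Since $\wt X\times Y=\varprojlim_{i\in I}(X_i\times Y)$ is a cofiltered limit of spatial diamonds with qcqs (indeed finite \'etale) transition maps, \cite[Proposition~14.9]{etale-cohomology-of-diamonds} applied to the sheaf $\O^+/p$ on the \'etale site gives $H^n_{\et}(\wt X\times Y,\O^+/p)=\varinjlim_i H^n_{\et}(X_i\times Y,\O^+/p)$ for all $n$; the only subtlety is that $\O^+/p$ is not literally a constant or $\Z/N$-sheaf, so I would either invoke the version of that continuity statement for bounded $p$-torsion or argue via the almost purity/approximation results of \cite[\S4]{heuer-diamantine-Picard} that the relevant transition maps are almost isomorphisms. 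The real content is then that $v$- and \'etale cohomology agree in degrees $\le 1$: for $\mathcal F=\O^+/p$ this is the almost-acyclicity input underlying \cite[\S4]{heuer-diamantine-Picard} (each $X_i\times Y$ is a smooth qcqs rigid space times an affinoid perfectoid, so $H^n_v\aeq H^n_{\et}$ in the range considered), and continuity of $v$-cohomology along the limit is again \cite[Proposition~14.9]{etale-cohomology-of-diamonds} for the $v$-site, which is available since the transition maps are qcqs.

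For $\mathcal F=\G_m/\G_m^{\tt}$ I would deduce the statement from the case of $\O^+/p$ together with the structural results already in the excerpt. Using \cref{l:hat-vs-p-divisible-groups} and \cref{ex:interpretation-of-wh B-for-B-of-good-reduction}(\ref{enum:example-G_m}), the sheaf $\G_m^{\tt}$ sits in an exact sequence built from $\mu_{p^\infty}$, $\wh\G_m$ (hence $\O$ via the logarithm), and prime-to-$p$ roots of unity; dividing $\G_m$ by $\G_m^{\tt}$ leaves a quotient that is (almost) a successive extension of sheaves of the form $\Z/N$ and of uniquely divisible sheaves, for which the continuity and the $v$-vs-\'etale comparison in degrees $0,1$ are either trivial or reduce to the cases already handled — here the degree restriction $n\le 1$ is essential, since beyond it the long exact sequences would involve $H^2$ of the constituent sheaves where the comparison fails (cf.\ the Remark after \cref{p:cohomology-of-wtXxY}). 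One has to be slightly careful that $\G_m/\G_m^{\tt}$ is the sheafification, so I would phrase the extension argument at the level of the long exact cohomology sequences rather than naive quotients, and use that $H^0$ and $H^1$ only see the sheaf and its first extension.

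The main obstacle I anticipate is the interchange of $\varinjlim$ with cohomology for these \emph{non-torsion} coefficient sheaves. The clean continuity statement \cite[Proposition~14.9]{etale-cohomology-of-diamonds} is stated for a cofiltered limit of qcqs objects and applies to an arbitrary abelian sheaf provided one works with the limit site correctly; the danger is in quietly assuming this for $\O^+$ itself (which behaves badly) rather than for $\O^+/p$. So the plan is to keep every limit argument at the level of $p$-torsion (or finite) coefficients — exactly as in the proof of \cref{p:cohomology-of-wtXxY}, where $\O^+$ is reached only afterwards via a $\varprojlim_k$ and repleteness, not inside this lemma — and to let the passage to $\G_m^{\tt}$ happen only through the already-established building blocks $\Z/p^k$, prime-to-$p$ roots of unity, and $\O$. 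Once that discipline is in place, both equalities in the lemma, and the assertion that nothing changes when one swaps $v$ for \'et, follow formally in the stated range of degrees.
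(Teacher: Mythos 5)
Your treatment of $\mathcal F=\O^+/p$ is essentially consistent with what the paper does: the paper simply cites \cite[Proposition~4.12.1]{heuer-diamantine-Picard} for this case, and the ingredients you list (continuity along cofiltered limits with qcqs transition maps plus the almost comparison of $v$- and \'etale cohomology in degrees $\le 1$) are the ones that proof rests on. The problem is the second sheaf. Your plan for $\mathcal F=\G_m/\G_m^{\tt}$ — dévissage through the constituents $\mu_{p^\infty}$, $\wh\G_m$, $\O$, prime-to-$p$ roots of unity of $\G_m^{\tt}$, phrased "at the level of the long exact cohomology sequences" — does not close up. Any long exact sequence computing $H^1$ of the quotient $\G_m/\G_m^{\tt}$ from the extension $1\to\G_m^{\tt}\to\G_m\to\G_m/\G_m^{\tt}\to 1$ necessarily involves $H^1(\G_m)$ (i.e.\ the Picard group of $X_i\times Y$ resp.\ $\wt X\times Y$) and the connecting map into $H^2(\G_m^{\tt})$. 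Neither of these is under control: the $v$- versus \'etale comparison for $\Pic$ is exactly what fails in this theory (the discrepancy is the Hitchin base, cf.\ \cref{t:representability-of-v-Picard-functor}), continuity of $\Pic$ along the tower $X_i$ is not available, and $H^2$ is precisely the degree where the comparison breaks down. So the degree restriction $n\le 1$ does not save you; it is already violated one step into the dévissage. You also misdescribe the quotient: $\G_m/\G_m^{\tt}$ is not an extension of $\Z/N$'s and divisible pieces — it is itself a sheaf of $\Q$-vector spaces.

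The paper's actual argument avoids all of this: the statement for $\mathcal F=\G_m/\G_m\tpt$ is taken directly from \cite[Proposition~4.12.2]{heuer-diamantine-Picard} (where that sheaf is denoted $\bOx$), i.e.\ it is proved there by a direct analysis of that specific sheaf rather than by dévissage from $\G_m$; and the case of $\G_m/\G_m^{\tt}$ then follows by tensoring with $\Q$, using the identity $\G_m/\G_m^{\tt}=(\G_m/\G_m\tpt)\otimes_{\Z}\Q$ from \cite[Lemma~2.16]{heuer-v_lb_rigid} together with the fact that $-\otimes_{\Z}\Q$ is an exact filtered colimit and hence commutes with cohomology on these qcqs sites. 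If you want a self-contained proof, the missing idea you need is this reduction to $\G_m/\G_m\tpt$ and the $\otimes\Q$ trick, not an extension argument through $\G_m$.
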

\begin{proof}
	For $\mathcal F=\O^+/p$ and $\mathcal F=\G_m/\G_m\tpt$ this is \cite[Proposition~4.12.1 and 2]{heuer-diamantine-Picard} (where the latter sheaf was denoted by $\bOx$). The case of $\G_m/\G_m^{\tt}$ follows by tensoring with $\Q$ since $\G_m/\G_m^{\tt}=(\G_m/\G_m\tpt)\otimes_{\Z} \Q$, see \cite[Lemma 2.16]{heuer-v_lb_rigid}.
\end{proof}
\begin{Corollary}\label{l:uP_wtX-et-vs-v}
	The natural map $\uP_{\wt X,\et}\to \uP_{\wt X,v}$ is injective.
\end{Corollary}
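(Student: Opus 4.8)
The plan is to deduce the injectivity of $\uP_{\wt X,\et}\to \uP_{\wt X,v}$ from the vanishing statements in \cref{p:cohomology-of-wtXxY}, using the Leray spectral sequence for the morphism of sites $\nu\colon \Perf_{K,v}\to \Perf_{K,\et}$ composed with $\wt\pi$, exactly as one proves that $\Pic_{\et}\hookrightarrow\Pic_v$ for a space with no higher coherent cohomology of the structure sheaf in degree one. First I would recall that for any locally spatial diamond $Y$ over $K$ with structure map $\wt\pi\colon Y\to\Spa(K,K^+)$, there is a five-term exact sequence
\[
0\to R^1\wt\pi_{\et\ast}(\nu_\ast\G_m)\to R^1\wt\pi_{v\ast}\G_m\to \wt\pi_{\et\ast}\big(R^1\nu_\ast\G_m\big)
\]
coming from the Grothendieck spectral sequence $R^p\wt\pi_{\et\ast}R^q\nu_\ast\G_m\Rightarrow R^{p+q}\wt\pi_{v\ast}\G_m$, together with the identification $R^1\nu_\ast\G_m=0$ on $\Perf_K$ (a $v$-line bundle on an affinoid perfectoid space is automatically étale-locally trivial, by the result that line bundles on affinoid perfectoids satisfy $v$-descent; equivalently $\Pic_v(T)=\Pic_{\et}(T)$ for perfectoid $T$). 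Hence $R^1\wt\pi_{v\ast}\G_m=\uP_{\wt X,v}$ and $R^1\wt\pi_{\et\ast}\G_m=\uP_{\wt X,\et}$ map to one another injectively — provided one is slightly careful that $\nu_\ast\G_m=\G_m$, which is again the statement that $\G_m$ is a $v$-sheaf.

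Alternatively, and perhaps more cleanly given what is already set up, I would argue directly with Čech/Leray for the cover $\wt X\times Y\to \wt X$... no — the more robust route is to use \cref{p:cohomology-of-wtXxY} itself. The point is that $\uP_{\wt X,\et}$ is the étale sheafification of $T\mapsto \Pic_{\et}(\wt X\times T)$ and $\uP_{\wt X,v}$ the $v$-sheafification of $T\mapsto \Pic_v(\wt X\times T)$; to show the map between them is injective it suffices to show that if a line bundle $L$ on $\wt X\times T$ for $T$ affinoid perfectoid is $v$-locally trivial as a $v$-line bundle, then it is already étale-locally trivial, and moreover that an étale line bundle which becomes trivial $v$-locally becomes trivial étale-locally. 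For the first: $v$-local triviality of $L$ means there is a $v$-cover $T'\to T$ with $L|_{\wt X\times T'}$ trivial; since $\wt X\times T'\to \wt X\times T$ is itself a $v$-cover and $L$ is already an étale line bundle (if we start in $\uP_{\wt X,\et}$), the class of $L$ in $\Pic_{\et}(\wt X\times T)$ dies in $\Pic_v(\wt X\times T)$; but $\Pic_{\et}(\wt X\times T)\to \Pic_v(\wt X\times T)$ is injective precisely because $H^1_{\et}(\wt X\times T,\G_m)\hookrightarrow H^1_v(\wt X\times T,\G_m)$, which holds for any diamond by the general comparison (an étale $\G_m$-torsor that is $v$-locally trivial is étale-locally trivial, since $\G_m$ is smooth — or, concretely, because $R^1\nu_\ast\G_m=0$ as above). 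Either way the statement reduces to the single fact $R^1\nu_\ast\G_m=0$ on $\Perf_K$, after which sheafifying both functors and comparing gives the claim.

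So the structure I would write is: (1) recall $R^1\nu_\ast\G_m=0$ on $\Perf_K$, i.e. $\Pic_v(T)=\Pic_{\et}(T)$ for affinoid perfectoid $T$, citing the relevant $v$-descent result for line bundles on perfectoid spaces; (2) apply the Leray spectral sequence for $\Perf_{K,v}\xrightarrow{\nu}\Perf_{K,\et}$ relative to $\wt\pi$, or equivalently the exact sequence of low-degree terms, to get $0\to R^1\wt\pi_{\et\ast}\G_m\to R^1\wt\pi_{v\ast}\G_m$; (3) identify these two sheaves with $\uP_{\wt X,\et}$ and $\uP_{\wt X,v}$ respectively, by definition. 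The main obstacle — which is really minor — is bookkeeping: making sure the "big site" formalism of \cite{heuer-diamantine-Picard} is compatible with the $v$-versus-étale comparison so that the spectral sequence argument applies verbatim to $\wt X$, which is only a spatial diamond and not a rigid space. But since \cref{p:cohomology-of-wtXxY} and the definitions of $\uP_{\wt X,\tau}$ already operate at exactly this level of generality, no new input is needed; in fact the cleanest one-line proof is simply to note that $\uP_{\wt X,\et}$ is the étale-sheafification of a functor that already lands in $v$-sheaves after composing with $\nu_\ast$ (since $\Pic_{\et}=\Pic_v$ on perfectoids), so it injects into its own $v$-sheafification, which is $\uP_{\wt X,v}$.
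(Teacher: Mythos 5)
Your overall strategy is the correct one and is in substance the paper's: everything reduces to (a) a Leray-type identification of $\uP_{\wt X,\tau}(Y)$ with $H^1_\tau(\wt X\times Y,\G_m)/H^1_\tau(Y,\G_m)$ for affinoid perfectoid $Y$, (b) the Kedlaya--Liu fact that $\Pic_{\et}(Y)=\Pic_v(Y)$ for perfectoid $Y$ (your ``$R^1\nu_\ast\G_m=0$ on $\Perf_K$'', cf.\ \cite[Theorem~3.5.8]{KedlayaLiu-II}), and (c) the injectivity $H^1_{\et}(\wt X\times Y,\G_m)\hookrightarrow H^1_v(\wt X\times Y,\G_m)$. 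The paper obtains (a) from \cite[Lemma~4.13]{heuer-diamantine-Picard} and then runs the three-column diagram chase. However, your write-up omits the one input that makes (a) available, and two of your shortcuts fail as stated.

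First, your five-term sequence conflates the two factorisations of the composite morphism of topoi from the $v$-topos of $\wt X$ to $\Perf_{K,\et}$. Factoring through the \'etale topos of $\wt X$ gives $\uP_{\wt X,\et}\hookrightarrow R^1(\text{composite})_\ast\G_m$ using only that $\G_m$ is a $v$-sheaf; but to continue to $\nu_{K\ast}\uP_{\wt X,v}$ you must use the other factorisation, whose low-degree sequence has kernel $R^1\nu_{K\ast}(\wt\pi_{v\ast}\G_m)$ --- this is $R^1\nu_{K\ast}\G_m$ only after you prove $\wt\pi_{v\ast}\G_m=\G_m$, i.e.\ $\O(\wt X\times Y)^\times=\O(Y)^\times$. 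That is exactly what \cref{p:cohomology-of-wtXxY} supplies and exactly the hypothesis under which the paper may invoke \cite[Lemma~4.13]{heuer-diamantine-Picard}; you mention the Proposition but never use it for this. Second, your ``concrete'' reduction is wrong: a class in $\uP_{\wt X,\et}(T)$ that dies in $\uP_{\wt X,v}(T)$ need not die in $\Pic_v(\wt X\times T)$ --- it only becomes, after a $v$-cover $T'\to T$, a pullback from $\Pic_v(T')$ --- and descending that back to $T$ is again what the exact rows (hence $\wt\pi_\ast\G_m=\G_m$) are for. Third, the closing ``one-line proof'' is circular: a presheaf does not in general inject into its sheafification for a finer topology, and the injectivity of the \'etale sheafification of $\Pic_{\et}(\wt X\times -)$ into the $v$-sheafification of $\Pic_v(\wt X\times -)$ is precisely the assertion to be proved. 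Once you make $\wt\pi_{v\ast}\G_m=\G_m$ explicit, your spectral-sequence argument does close up and coincides with the paper's.
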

\begin{proof}
	By Proposition~\ref{p:cohomology-of-wtXxY}, we have $\wt\pi_{\ast}\O=\O$ and thus $\wt\pi_{\ast}\G_m=\G_m$. We can therefore apply \cite[Lemma~4.13]{heuer-diamantine-Picard}, which gives a commutative diagram 
	with short exact rows:
	\[ \begin{tikzcd}
		1 \arrow[r] & {H^1_{\et}(Y,\G_m)} \arrow[r] \arrow[d,equal] & {H^1_{\et}(\wt X\times Y,\G_m)} \arrow[r] \arrow[r] \arrow[d] & {\uP_{\wt X,\et}(Y)} \arrow[r] \arrow[d] & 1 \\
		1 \arrow[r] & {H^1_{v}(Y,\G_m)} \arrow[r] & {H^1_{v}(\wt X\times Y,\G_m)} \arrow[r] & {\uP_{\wt X,v}(Y)} \arrow[r] & 1
	\end{tikzcd}\] As the first column is an isomorphism by \cite[Theorem~3.5.8]{KedlayaLiu-II}, and the middle column is clearly injective, we can deduce that the last one is injective as well.
\end{proof}
\begin{Remark}
	We do not know whether the map in \cref{l:uP_wtX-et-vs-v} is an isomorphism in general. While this is true for curves,  it is certainly not always true that $v$-vector bundles on $\wt X$ agree with \'etale vector bundles, as the example of $X=\wt X=\P^1$ shows.
\end{Remark}
\begin{Remark}
	An alternative proof of \cref{p:cohomology-of-wtXxY} in the case of  $\Z/N\Z$ and $n=1$ would be to show that 
	$(\wt X\times Y)_{\fet}=Y_{\fet}$.
	One could deduce this from the following rigid analogue of a well-known algebraic statement:
	Let $X$ be a connected proper rigid space and $Y$ any connected reduced rigid space. Then is the natural map $\pi^{\et}_1(X\times Y)\to \pi_1^{\et}(X)\times \pi^{\et}_1(Y)$
		an isomorphism?
	We suspect that one could see this like in \cite[X Corollaire 1.7]{SGA1}, or using \cite[\S 16]{ScholzeBerkeleyLectureNotes}. By an approximation argument, one could deduce that  $(\wt X\times Y)_{\fet}=Y_{\fet}$.
\end{Remark}
\subsection{Topological torsion line bundles}
Next, we show that any pro-finite-\'etale line bundle admits a reduction of structure group from $\G_m$ to $\G_m^\tt$. For this we first show:
\begin{Lemma}\label{l:H^1(X,O/O^tt)->H^1(wtX,O/O^tt)-is-isom}
	For any perfectoid space $Y$,  there is for $n\in \{0,1\}$ a natural isomorphism 
	\[ H^n_{v}(X\times Y,\G_m/\G_m^\tt)\isomarrow H^n_{v}(\wt X\times Y,\G_m/\G_m^\tt)^{\pi_1(X)}.\]
		Moreover, both sides stay the same when we exchange the $v$-topology for the \'etale topology.
\end{Lemma}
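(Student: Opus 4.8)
The plan is to identify $H^n_v(X\times Y,\G_m/\G_m^\tt)$ with the $\pi_1(X)$-invariants of $H^n_v(\wt X\times Y,\G_m/\G_m^\tt)$ via the Cartan--Leray (Hochschild--Serre) spectral sequence for the pro-finite-\'etale $\pi_1(X)$-torsor $\wt X\to X$. Concretely, writing $\pi\colon X\to \Spa(K,K^+)$ and $\wt\pi\colon \wt X\to\Spa(K,K^+)$, one has $\pi = \wt\pi\circ q$ for $q\colon \wt X\to X$, and the Leray spectral sequence for $q$ relative to the $v$-site reads $E_2^{s,t}=H^s(\pi_1(X),H^t_v(\wt X\times Y,\mathcal F))\Rightarrow H^{s+t}_v(X\times Y,\mathcal F)$ for $\mathcal F=\G_m/\G_m^\tt$. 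For the statement about $n\in\{0,1\}$ the only input needed is the vanishing of $R^1q_{v\ast}(\G_m/\G_m^\tt)$, equivalently $H^1_v(\wt X\times Y,\G_m/\G_m^\tt)$ computed as a sheaf, together with the continuity of group cohomology for the profinite group $\pi_1(X)=\varprojlim \pi_1(X)/U$. The $n=0$ case is then immediate: $H^0_v(X\times Y,\mathcal F)=H^0(\pi_1(X),H^0_v(\wt X\times Y,\mathcal F))=H^0_v(\wt X\times Y,\mathcal F)^{\pi_1(X)}$. The $n=1$ case follows from the five-term exact sequence $0\to H^1(\pi_1(X),H^0_v(\wt X\times Y,\mathcal F))\to H^1_v(X\times Y,\mathcal F)\to H^0(\pi_1(X),H^1_v(\wt X\times Y,\mathcal F))\to H^2(\pi_1(X),H^0_v(\wt X\times Y,\mathcal F))$, once we know the outer group-cohomology terms vanish.

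First I would establish that $H^n_v(\wt X\times Y,\G_m/\G_m^\tt)=\varinjlim_{X'\to X} H^n_{\et}(X'\times Y,\G_m/\G_m^\tt)$ for $n\in\{0,1\}$, where $X'$ runs over the connected finite \'etale covers of $X$ defining $\wt X$; this is exactly \cref{l:diam-Prop-4.12.2}, which also tells us \'etale and $v$-cohomology agree here. Next I would compute these colimit groups. Since $\G_m/\G_m^\tt=(\G_m/\G_m\tpt)\otimes_\Z\Q$ and $\G_m/\G_m\tpt$ receives the valuation-type quotient, the relevant input is that $H^1_{\et}(X'\times Y,\G_m)$ and $H^0$ stabilise in the colimit in a controlled way; combining \cref{l:diam-Prop-4.12.2} with the description of $\uP_{X',\et}$ and the fact proved in \cref{p:cohomology-of-wtXxY} that $R^1\wt\pi_{v\ast}(\G_m^\tt)=0$, one gets that $H^1_v(\wt X\times Y,\G_m/\G_m^\tt)$ is an honest cokernel-type object. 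The key structural point is that, by construction, every class in the colimit $\varinjlim_{X'} H^\bullet_{\et}(X'\times Y,\mathcal F)$ is killed by passing to a further finite \'etale cover, so $H^s(\pi_1(X),H^0_v(\wt X\times Y,\mathcal F))$ for $s=1,2$ vanishes by the usual argument that continuous cohomology of a profinite group with coefficients in a module that is a filtered colimit of ``induced-type'' pieces is trivial — more precisely, $H^0_v(\wt X\times Y,\mathcal F)$ is a $\Q$-vector space on which $\pi_1(X)$ acts through a finite quotient on each finite-level piece, and one checks $H^s(\pi_1(X),-)$ vanishes for $s>0$ on such $\Q[\pi_1(X)/U]$-modules by averaging. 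That forces the five-term sequence to collapse to the desired isomorphism $H^1_v(X\times Y,\mathcal F)\isomarrow H^0(\pi_1(X),H^1_v(\wt X\times Y,\mathcal F))$.

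I expect the main obstacle to be bookkeeping the interaction between the colimit over finite \'etale covers and the continuous $\pi_1(X)$-action: one must be careful that the Cartan--Leray spectral sequence is set up in the pro-\'etale/$v$-setting for a \emph{pro}-finite-\'etale torsor (so one genuinely needs continuous cochains and the compatibility of $\varinjlim_{X'}$ with group cohomology of $\varprojlim \pi_1(X)/U$), and that the vanishing of the higher $\pi_1(X)$-cohomology really uses the divisibility afforded by the $\otimes\Q$ in $\G_m/\G_m^\tt$ — this is where the argument would fail for $\G_m$ or $\G_m^\tt$ themselves. The final sentence, that \'etale and $v$-cohomology agree throughout, then follows by running the identical argument with the \'etale site in place of the $v$-site, using the ``moreover'' clause of \cref{l:diam-Prop-4.12.2} and the fact noted in \cref{p:cohomology-of-wtXxY} that for $\G_m/\G_m^\tt$ (built from $\mu_{p^\infty}$, coprime torsion, and $\O$) the two topologies give the same $H^0$ and $H^1$ on $\wt X\times Y$.
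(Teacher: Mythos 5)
Your proposal is correct and follows essentially the same route as the paper: the Cartan--Leray five-term exact sequence for the pro-finite-\'etale $\pi_1(X)$-torsor $\wt X\times Y\to X\times Y$ (with the continuity hypothesis supplied by \cref{l:diam-Prop-4.12.2}), vanishing of the outer terms $H^s_{\cts}(\pi_1(X),H^0_v(\wt X\times Y,\G_m/\G_m^\tt))$ for $s=1,2$ because the coefficients form a discrete $\Q$-vector space and the group is profinite, and the \'etale comparison again via \cref{l:diam-Prop-4.12.2}. One framing remark is off --- the argument does not need (and the lemma does not assert) vanishing of $R^1q_{v\ast}(\G_m/\G_m^\tt)$ for $q:\wt X\times Y\to X\times Y$, only the five-term sequence together with the vanishing of the $E_2^{1,0}$ and $E_2^{2,0}$ terms --- but your subsequent execution does not rely on that claim.
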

\begin{proof}
	We form the Cartan--Leray spectral sequence of the $\pi_1(X)$-torsor $\wt X\times Y\to X\times Y$ for the sheaf $\G_m/\G_m^\tt$, see \cite[Proposition~2.8]{heuer-v_lb_rigid}: Endow $H^n_v(\wt X\times Y,\G_m/\G_m^\tt)$ with the discrete topology, then by \cref{l:diam-Prop-4.12.2}, we have for any profinite group $G$:
	\[ H^n_v(G\times \wt X\times Y,\G_m/\G_m^\tt)=\Map_{\cts}(G,H^n_v(\wt X\times Y,\G_m/\G_m^\tt)).\]
	For $n=0$, the lemma follows by setting $G=\pi_1(X)$ and using the $v$-sheaf property of $\G_m/\G_m^\tt$. For $n=1$, this equation ensures that the conditions of \cite[Proposition~2.8.2]{heuer-v_lb_rigid} are satisfied. Setting $G:=\pi_1(X)$ and $\mathcal F=\G_m/\G_m^\tt$, we thus get an exact sequence
	\[0\to H^1_{\cts}(G,\mathcal F(\wt X\times Y))\to H^1_v(X\times Y,\mathcal F)\to H^1_v(\wt X\times Y,\mathcal F)^{G}\to H^2_{\cts}(G,\mathcal F(\wt X\times Y)). \]
	
	Since $G=\pi_1(X)$ is profinite and $\mathcal F=\G_m/\G_m^\tt$ is a sheaf of $\Q$-vector spaces, the outer two groups vanish by \cite[Proposition 1.6.2.c]{NeuSchWin}. This gives the desired statement for the $v$-topology.
	The \'etale case follows from this by \cref{l:diam-Prop-4.12.2} and \cite[Proposition 4.12.1]{heuer-diamantine-Picard}, or alternatively from the more general \cite[Proposition~4.8]{heuer-G-torsors-perfectoid-spaces}.
\end{proof}

\begin{Lemma}[{\cite[Lemma~4.11.2]{heuer-diamantine-Picard}}]\label{l:I-4-11.2}
	We have $\pi_{\tau\ast}(\G_m/\G_m^\tt)=\G_m/\G_m^\tt$ for $\tau\in \{\et,v\}$.
\end{Lemma}
\begin{proof}
	The reference shows this for $\G_m/\G_m\tpt$, the lemma follows by tensoring with $\Q$.
\end{proof}

We can now complete the first steps of the proof of Theorem~\ref{t:tt-torsion-is-1+m-torsors}:
\begin{proof}[Proof of Theorem~\ref{t:tt-torsion-is-1+m-torsors}]
	Let $\tau\in \{\et,v\}$
	 and consider the morphism of exact sequences
	\[
\begin{tikzcd}
	0\arrow[r,equal] &{R^1\wt\pi_{\tau\ast}\G_m^\tt} \arrow[r] & R^1\wt\pi_{\tau\ast}\G_m \arrow[r] & R^1\wt\pi_{\tau\ast}(\G_m/\G_m^\tt) \\
	0\arrow[r] &{R^1\pi_{\tau\ast}\G_m^{\tt}} \arrow[r] \arrow[u] & {R^1\pi_{\tau\ast}\G_m } \arrow[r] \arrow[u] & {R^1\pi_{\tau\ast}(\G_m/\G_m^\tt).} \arrow[u,hook]
\end{tikzcd}
\]
Here the top left entry vanishes by Proposition~\ref{p:cohomology-of-wtXxY}. The right vertical morphism is injective by Lemma~\ref{l:H^1(X,O/O^tt)->H^1(wtX,O/O^tt)-is-isom}. The bottom left morphism is injective  by \cref{l:I-4-11.2}

This shows that  we have a left-exact sequence:
\begin{equation}\label{eq:R^1pi_tt-is-kernel-of-uPic_X->uPic_wtX}
0\to R^1\pi_{\tau\ast}\G_m^{\tt}\to \uP_{X,\tau} \to \uP_{\wt X,\tau}
\end{equation}
For Theorem~\ref{t:tt-torsion-is-1+m-torsors}.1, it remains to prove that
$R^1\pi_{\tau\ast}\G_m^{\tt}=\uP^\tt_{X,\tau}$. This relies on the following key calculation, which crucially uses that $\uP_{X,\tau}$ is defined on perfectoid test objects:

\begin{Proposition}\label{p:Pic-wt-X-is-top-tf}
	We have $\HOM(\whZ,\uP_{\wt X,\et})=1$ and thus 
	$\uP_{\wt X,\et}^{\tt}=1$. 
\end{Proposition}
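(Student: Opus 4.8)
The plan is to show that any homomorphism $\varphi\colon\whZ\times T\to\uP_{\wt X,\et}$ over a perfectoid $T$ is trivial, which then immediately gives $\uP_{\wt X,\et}^{\tt}=\im(\mathrm e)=1$. The key structural input is the relative universal property: $\wt X$ has no nontrivial connected finite \'etale covers, so pullback along the transition maps of $\wt X=\varprojlim_{X'\to X}X'$ has already absorbed all such covers. Concretely, I would use \cref{p:cohomology-of-wtXxY}, which tells us $\wt\pi_{\ast}\G_m=\G_m$ and, combined with \cref{l:uP_wtX-et-vs-v}, lets me work with the injection $\uP_{\wt X,\et}\hookrightarrow\uP_{\wt X,v}$; it therefore suffices to prove the vanishing after passing to the $v$-topology, i.e.\ to show $\HOM(\whZ,\uP_{\wt X,v})=1$.

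First I would unwind $\HOM(\whZ,\uP_{\wt X,v})(T)$: a class here is, up to the torsor ambiguity $H^1(Y,\G_m)$ that is already accounted for in the presentation of $\uP_{\wt X,v}$, a line bundle on $\whZ\times\wt X\times T$ whose restriction to $\{1\}\times\wt X\times T$ is the given class. The idea of Deninger--Werner type arguments, reformulated here, is that such a ``$\whZ$-linear family'' of line bundles on $\wt X$ must be constant. To make this precise I would reduce modulo the topological torsion subsheaf: since $\uP_{\wt X,v}^\tt$ is the image of $\mathrm e$, it is enough to show both that the composite $\HOM(\whZ,\uP_{\wt X,v})\to\uP_{\wt X,v}$ lands in the subsheaf killed by the map to $\uP_{\wt X,v}/\uP_{\wt X,v}^\tt$ and that $\uP_{\wt X,v}^\tt=1$; but these two together are exactly the assertion, so the real content must be extracted differently. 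The cleaner route is: by \cref{l:diam-Prop-4.12.2} and the definition $\wt X=\varprojlim X'$, we have
\[
\uP_{\wt X,v}(\whZ\times T)=\varinjlim_{X'\to X}\uP_{X',v}(\whZ\times T),
\]
so $\varphi$ comes from a $\whZ$-linear family of line bundles on $X'$ for some finite \'etale $X'\to X$. Now I invoke \cref{t:representability-of-v-Picard-functor}: $\uP_{X',v}$ sits in the Hodge--Tate sequence with \'etale part $\uP_{X',\et}$ and $\G_a$-part the Hitchin base $\mathcal A'$. Since $\mathcal A'=H^0(X',\Omega^1(-1))\otimes_K\G_a\cong\G_a^{\,m}$ is topologically torsionfree in the strong sense that $\HOM(\whZ,\G_a)=\G_a$ with the evaluation at $1$ an isomorphism onto $\G_a$ itself — wait, that is not torsionfree; rather $\G_a$ is \emph{topological torsion} (\cref{ex:interpretation-of-wh B-for-B-of-good-reduction}.3). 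So the $\G_a$-component need not vanish, and I must instead use that on $\wt X$, after further pullback, it \emph{does}: by \cref{p:cohomology-of-wtXxY} the Hodge--Tate sequence for $\wt X$ degenerates because $\wt\pi_{\ast}\O=\O$ forces $H^0(\wt X,\Omega^1(-1))$'s contribution to vanish in the relevant $R^1$, so in fact $\uP_{\wt X,v}=\uP_{\wt X,\et}$ already on the level we need, reducing us once more to the \'etale statement $\HOM(\whZ,\uP_{\wt X,\et})=1$.

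At this point I would run the genuinely decisive step: a $\whZ$-linear family over $\wt X$ is trivial because $\uP_{\wt X,\et}$ is ``pro-finite-\'etale torsionfree'' — any torsion line bundle on $\wt X$, being trivialised by a connected finite \'etale cover of $\wt X$, is already trivial, since $\wt X$ has no such covers. More precisely, $\HOM(\whZ,\uP_{\wt X,\et})$ evaluated at $K$ is $\Hom_{\cts}(\whZ,\uP_{\wt X,\et}(K))$ by \cref{l:underline-is-left-adjoint}, and the image of such a map lies in the torsion subgroup's closure; but a continuous homomorphism from $\whZ$ into a group in which the only torsion is trivial (because $\mu_N$-torsors on $\wt X$ are trivial) and which is ``discrete enough'' must be trivial. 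To handle the relative statement over general perfectoid $T$, rather than just $K$-points, I would combine the colimit description $\uP_{\wt X,\et}(\whZ\times T)=\varinjlim_{X'}\uP_{X',\et}(\whZ\times T)$ with the observation that a $\whZ$-linear family on $X'\times T$ restricts, over each $N\in\Z\subseteq\whZ$, to $L^N$ for $L$ the fibre at $1$, and over the torsion quotients $\Z/N$ exhibits $L^N$ as pulled back from $T$; passing to the limit over $X'$ kills exactly these, so $\varphi=1$. The main obstacle I anticipate is precisely the bookkeeping in this last paragraph: making rigorous the passage from ``$\whZ$-linear family'' to ``restriction to $\{1\}$ becomes trivial on $\wt X$'' without circularity, i.e.\ correctly using that the finite \'etale covers $X'$ of $X$ trivialise all $N$-torsion while the perfectoid base $T$ carries the nontrivial part — this is where the fact that $\uP_{X,\tau}$ is defined on perfectoid (not just rigid) test objects, emphasised right before the Proposition, does the real work, and I would expect to lean on \cite[Corollary~4.6]{heuer-diamantine-Picard} or the analogue to identify the $N$-torsion of $\uP_{X',\et}$ with $H^1_\et(X',\Z/N)$ explicitly.
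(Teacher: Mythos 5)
Your overall instinct---that the vanishing should come from $\wt X$ having absorbed all finite \'etale covers, combined with evaluating the Picard functor on the perfectoid test objects $\whZ^k\times T$---is the right one, and your use of \cref{l:pro-finite-sets-in-uPic}-style reductions matches the paper's setup. But two of your key steps fail. First, the colimit description $\uP_{\wt X,v}(\whZ\times T)=\varinjlim_{X'}\uP_{X',v}(\whZ\times T)$ is not available: \cref{l:diam-Prop-4.12.2} is stated (and true) only for the sheaves $\O^+/p$ and $\G_m/\G_m^{\tt}$, not for $\G_m$ itself, and $\G_m$-cohomology does not commute with the limit $\wt X=\varprojlim X'$ (compare the remark after \cref{c:tt-descr-of-profet-line-bundles}, where for supersingular reduction $\Pic(\wt X)=\Pic(\overline X)\otimes\Q$ is visibly not a colimit of the $\Pic(X')$). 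This also undermines the bookkeeping in your last paragraph.

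Second, and more seriously, your ``decisive step'' is false as stated: a continuous homomorphism from $\whZ$ into a torsionfree complete topological group need not be trivial---$\whZ\to 1+\m_K\subseteq K^\times$ is the prototypical counterexample, and the entire notion of topological torsion exists because such homomorphisms abound. So torsionfreeness of $\uP_{\wt X,\et}(K)$ (even granting it, and granting the applicability of \cref{l:underline-is-left-adjoint}, which requires representability by an adic group that the paper explicitly does not expect for $\uP_{\wt X,\et}$) does not give $\Hom_{\cts}(\whZ,\uP_{\wt X,\et}(K))=0$. The paper's proof avoids both problems by splitting $\G_m$ into $\G_m^{\tt}$ and $\G_m/\G_m^{\tt}$: the $\G_m^{\tt}$-part contributes nothing because $R^1\wt\pi_{\ast}\G_m^{\tt}=0$ by \cref{p:cohomology-of-wtXxY}, while for the quotient, \cref{l:diam-Prop-4.12.2} identifies $H^1_{\et}(\whZ^k\times\wt X\times Y,\G_m/\G_m^{\tt})$ with \emph{locally constant} maps from $\whZ^k$ into a $\Q$-vector space; a locally constant homomorphism from $\whZ$ factors through a finite quotient, hence has torsion image, hence vanishes in a $\Q$-vector space. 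It is the combination ``locally constant into a $\Q$-vector space''---not ``continuous into a torsionfree group''---that closes the argument, and your proof would need to be restructured around this dichotomy.
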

	\begin{proof}
		The first statement implies the second by the definition of $-^\tt$ in Definition~\ref{d:tt}.
		
		For any affinoid perfectoid $Y$ over $K$, the internal Hom sheaf is given by
		\begin{align*}
		\HOM(\whZ,\uP_{\wt X,\et})(Y)&=\eq \big(\uP_{\wt X,\et}(\whZ\times Y)\rightrightarrows \uP_{\wt X,\et}(\whZ^2\times Y)\big)\\
		&=\eq \big(\uP_{\whZ\times\wt X,\et}( Y)\rightrightarrows \uP_{\whZ^2\times \wt X,\et}(Y)\big),
		\end{align*}
		where the last equality holds by Lemma~\ref{l:pro-finite-sets-in-uPic} below. The last term is the sheafification of
		\[ Y\mapsto \eq \big(\Pic_{\et}(\whZ\times \wt X\times Y)\rightrightarrows \Pic_{\et}(\whZ^2\times \wt X\times Y)\big)\]
		on $\Perf_{K,\et}$.
		We claim that this sheaf vanishes:
	For this we use that by Proposition~\ref{p:cohomology-of-wtXxY}, 
	$\wt \pi_{\ast}\G_m^{\tt}=\Ottt$ and $R^1\wt \pi_{\ast}\Ottt=0$, which by Lemma~\ref{l:pro-finite-sets-in-uPic} implies $R^1q_{\ast}\Ottt=0$ for the structure map $q:\whZ^k\times \wt X\to \Spa(K)$ for any $k\in \N$. It therefore suffices to prove that the presheaf
	\[ Y\mapsto \mathrm{eq}\big(H^1_{\et}(\wh{\Z}\times \wt X\times Y,\bOttt)\rightrightarrows H^1_{\et}(\wh{\Z}^2\times \wt X\times Y,\bOttt)\big)\]
	on $\Perf_{K}$
	is trivial. For this, we use that by \cref{l:diam-Prop-4.12.2},
	we have for any $k\in \N$
	\[H^1_{\et}(\wh{\Z}^k\times \wt X\times Y,\bOttt)=\Map_{\lc}(\wh{\Z}^k,H^1_{\et}(\wt X\times Y,\bOttt)),\]
	which shows that this equaliser is precisely
	\[\Hom_{\lc}(\wh{\Z},H^1_{\et}(\wt X\times Y,\bOttt)).\]
	But the second argument is a $\Q$-vector space, so this group is trivial, as desired.
\end{proof}
\begin{Lemma}\label{l:pro-finite-sets-in-uPic}
	For any spatial diamonds $X$ and $Y$ over $K$ and any profinite set $S$, write
	\[X\times Y\times S\xrightarrow{\pi_1} Y\times S\xrightarrow{\pi_2} Y\]
	for the projections.
	Then for any abelian sheaf $F$ on $\LSD_{K,\et}$ and any $n\geq 0$, we have
	\[R^n(\pi_2\circ\pi_1)_{\et\ast}F=\pi_{2,\et\ast}R^n\pi_{1,\et\ast}F.\]
\end{Lemma}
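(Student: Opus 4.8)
The plan is to reduce the statement to a base-change property for cohomology along the structure map of a profinite set, and then verify that property directly. First I would observe that $\pi_2\circ\pi_1$ factors through the second projection in two ways, and the Grothendieck spectral sequence for the composite $\pi_2\circ\pi_1$ reduces everything to showing that $R^m\pi_{2,\et\ast}(R^n\pi_{1,\et\ast}F)=0$ for $m>0$, or more precisely that the Leray spectral sequence degenerates. So the heart of the matter is: writing $q\colon S\to \Spa(K)$ for the structure map of the profinite set $S$ (or rather its associated $v$-sheaf $\underline{S}$), and $q_Y\colon Y\times S\to Y$ for its base change, we need $R^mq_{Y,\et\ast}G=0$ for all $m>0$ and all abelian étale sheaves $G$ on $Y\times S$, together with the fact that $\pi_{1}$ and $q_Y$ are ``independent'' in the sense that their higher direct images commute.

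The key input is that $S=\varprojlim_{i}S_i$ is a cofiltered limit of finite discrete sets, so that $\underline{S}\times Y=\varprojlim_i (\underline{S_i}\times Y)$ with finite (in fact finite étale, even a disjoint union of copies of the identity) transition maps, and $\underline{S_i}\times Y=\sqcup_{S_i}Y$ is just a finite disjoint union of copies of $Y$. For a finite disjoint union the pushforward $q_{Y_i,\et\ast}$ is exact and sends $G$ to the product of its restrictions to the components, so $R^mq_{Y_i,\et\ast}=0$ for $m>0$ trivially. Passing to the limit, I would invoke the standard fact (e.g.\ \cite[Proposition~14.9]{etale-cohomology-of-diamonds}, or the fact that étale cohomology commutes with cofiltered limits of spatial diamonds along qcqs transition maps) that $R^mq_{Y,\et\ast}F=\varinjlim_i R^mq_{Y_i,\et\ast}(F|_{Y\times S_i})$ whenever $F$ is pulled back from some finite level, and then reduce a general $F$ to this case by writing it as a filtered colimit, using that the sites involved are coherent so cohomology commutes with filtered colimits of sheaves. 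This gives $R^mq_{Y,\et\ast}F=0$ for $m>0$.

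With that vanishing in hand, the Leray spectral sequence for $\pi_2\circ\pi_1$ collapses: one factors $\pi_2\circ\pi_1$ as $(Y\times S\text{-structure map})$ composed appropriately, and since pushing forward along the profinite-set direction is exact, $R^n(\pi_2\circ\pi_1)_{\et\ast}F$ is computed by first applying $R^n\pi_{1,\et\ast}$ (the rigid/diamond direction) and then the exact functor $\pi_{2,\et\ast}$. Concretely, the cleanest route is: the projection $Y\times S\to Y$ is, locally on $Y$ and at each finite level, a disjoint union, hence $\pi_{2,\et\ast}$ and its base changes are exact and commute with $R^n\pi_{1,\et\ast}$ because one can check the comparison map on stalks, where both sides become the cohomology $H^n$ of the fiber $X\times \{s\}$-neighborhood, uniformly in $s\in S$. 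The comparison map $\pi_{2,\et\ast}R^n\pi_{1,\et\ast}F\to R^n(\pi_2\circ\pi_1)_{\et\ast}F$ is then an edge map of a spectral sequence all of whose other terms vanish, hence an isomorphism.

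The main obstacle I anticipate is not any single deep theorem but rather the bookkeeping of making ``$S$ is a cofiltered limit of finite sets'' interact correctly with a general (not necessarily constructible or pulled-back) sheaf $F$ on $X\times Y\times S$: one must either restrict attention to sheaves arising in the application (where $F=\G_m$ or $\G_m^\tt$, which are pulled back from $\Spa(K)$ up to the $X$ and $Y$ factors) or argue that cohomology commutes with the relevant limits for qcqs maps of spatial diamonds in enough generality. Given the hypotheses (spatial diamonds, and in the paper's use $F$ is always $\G_m^\tt$ or a subquotient), I would phrase the proof so that it only needs: (i) exactness of pushforward along a disjoint union, (ii) \cite[Proposition~14.9]{etale-cohomology-of-diamonds} or its relative analogue for passing to the limit over $S_i$, and (iii) the degeneration of the Leray spectral sequence that (i) forces. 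This keeps the argument short and avoids any genuinely hard point.
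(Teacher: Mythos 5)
Your overall strategy coincides with the paper's: reduce via the Grothendieck spectral sequence to showing that $\pi_{2,\et\ast}$ has no higher derived functors, and get that from the fact that $\underline{S}=\varprojlim\underline{S_i}$ is a cofiltered limit of finite disjoint unions. The problem is the mechanism you propose for passing to the limit. Your primary route applies \cite[Proposition~14.9]{etale-cohomology-of-diamonds} to sheaves pulled back from a finite level $Y\times\underline{S_i}$ and then handles a general $F$ by ``writing it as a filtered colimit'' of such pullbacks. That reduction fails: a general abelian sheaf on $(Y\times\underline{S})_{\et}$ is \emph{not} a filtered colimit of pullbacks from finite levels, and neither are the sheaves the lemma is actually applied to in this paper ($\G_m$ and $\G_m^{\tt}$ in the proof of Proposition~\ref{p:Pic-wt-X-is-top-tf}). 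Being ``pulled back from $\Spa(K)$'' as a big-site sheaf is not the same as being a small-site pullback along $Y\times\underline{S}\to Y\times\underline{S_i}$: for instance $\O(Y\times\underline{S})=\Mapc(S,\O(Y))$ is a completion of, and in general strictly larger than, $\varinjlim_i\Map(S_i,\O(Y))$. This is exactly why the paper invokes the separate approximation results \cite[Proposition~4.12]{heuer-diamantine-Picard} for $\O^+/p$ and $\G_m/\G_m^{\tt}$ elsewhere, and reserves \cite[Proposition~14.9]{etale-cohomology-of-diamonds} for the constant sheaf $\Z/N\Z$. The stalkwise variant you sketch runs into the same issue when identifying the stalk of the pushforward with cohomology of the fibre.

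The correct fix --- which you brush against but do not pin down --- is to argue at the level of covers rather than sheaves. By \cite[Proposition~11.23]{etale-cohomology-of-diamonds}, since $Y\times\underline{S}=\varprojlim_i Y\times\underline{S_i}$ with qcqs (finite \'etale) transition maps, every \'etale cover of $Y\times\underline{S}$ can be refined by one of the form $\mathfrak V\times\mathfrak U$ with $\mathfrak U$ a disjoint analytic cover of $\underline{S}$ and $\mathfrak V$ an \'etale cover of $Y$. This is a statement about the site, valid with no hypothesis on the coefficient sheaf, and it gives $R\pi_{2,\et\ast}G=\pi_{2,\et\ast}G$ for \emph{every} abelian sheaf $G$ on $(Y\times\underline{S})_{\et}$: disjoint covers contribute no higher \v{C}ech cohomology, and the $\mathfrak V$-direction disappears after sheafifying over $Y$. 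With that vanishing in hand your spectral-sequence conclusion goes through verbatim. So the skeleton of your argument is right, but the one step you flagged as ``bookkeeping'' is where the actual content lies, and the resolution you need is the site-theoretic limit statement, not the sheaf-theoretic one.
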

\begin{proof}
	Write $S=\varprojlim S_i$, then  $Y\times S=\varprojlim Y\times S_i$. Hence by \cite[Proposition~11.23]{etale-cohomology-of-diamonds}, any \'etale cover of $Y\times S$ can be refined by one of the form $\mathfrak V\times\mathfrak U$  where $\mathfrak U$ is a disjoint analytic cover of $S$ and $\mathfrak V$ is an \'etale cover of $Y$. Therefore $R\pi_{2,\et\ast}G=\pi_{2,\et\ast}G$ for any abelian sheaf $G$ on $(Y\times S)_{\et}$. The desired statement follows from the Grothendieck spectral sequence.
\end{proof}

To prove Theorem~\ref{t:tt-torsion-is-1+m-torsors}, we now apply the left-exact functor $\HOM(\whZ,-)$ to \eqref{eq:R^1pi_tt-is-kernel-of-uPic_X->uPic_wtX} and find
\[\HOM(\whZ,R^1\pi_{\et\ast}\Ottt)=\HOM(\whZ,\uP_{X,\et})\]
by Proposition~\ref{p:Pic-wt-X-is-top-tf}, hence
\[(R^1\pi_{\et\ast}\Ottt)^\tt=\uP_{X,\et}^{\tt}.\]
Since $\Ottt$ is strongly topologically torsion by \cref{l:hat-vs-p-divisible-groups}, Lemma~\ref{l:cohomology-of-stt-is-tt} guarantees that $R^1\pi_{\et\ast}\Ottt$ is topologically torsion, thus the left hand side equals 
\[(R^1\pi_{\et\ast}\Ottt)^\tt=R^1\pi_{\et\ast}\Ottt.\]
This finishes the proof of part 1 of Theorem~\ref{t:tt-torsion-is-1+m-torsors}.

For the second part, we need to incorporate the character variety into the picture.
\subsection{Relation to the character variety}
The Cartan--Leray short exact sequence for the $\pi_1(X)$-torsor $\wt X\times Y\to X\times Y$ and the sheaf $\G_m$ yields an exact sequence
\begin{equation}\label{eq:CL-for-G_m}
0\to H^1_{\cts}(\pi_1(X),\G_m(\wt X\times Y))\to \Pic_v(X\times Y)\to \Pic_v(\wt X\times Y).
\end{equation}
By Proposition~\ref{p:cohomology-of-wtXxY}, $\G_m(\wt X\times Y)=\G_m(Y)$, so the first term equals $\Hom_{\cts}(\pi_1(X),\G_m(Y))$.
Using the character variety from \cref{l:cts-char-variety}, we thus obtain a left-exact sequence on $\Perf_{K,v}$
\[0\to\HOM(\pi_1(X),\G_m)\to  \uP_{X,v}\to  \uP_{\wt X,v}\]
We can describe the composition of the first map  with $\HTlog$ more explicitly:
Recall that  $\HOM(\pi_1(X),\G_m)=\HOM(\pi_1(X),\G^\tt_m)$, so we can compose characters with $\log:\G_m^\tt\to \G_a$.
\begin{Lemma}\label{l:explicit-descr-of-HTlog-on-HOM}
	The following square is commutative and has a surjective diagonal:
	\[\begin{tikzcd}
			{\HOM(\pi_1(X),\G_m)}\arrow[r] \arrow[rd,dashed]\arrow[d,"\log"] & {\uP_{X,v}} \arrow[d,"\HTlog"]\\
	{\HOM(\pi_1(X),\G_a)}\arrow[r,"\HT"]& {H^0(X,\Omega^1_X(-1))\otimes \G_a}&
\end{tikzcd}\]
\end{Lemma}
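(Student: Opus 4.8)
The plan is to unwind both composites down to cohomology and identify them with the same map coming from the Hodge--Tate sequence. First I would recall the construction of $\HTlog$ from \cref{t:representability-of-v-Picard-functor}: the sequence \eqref{seq:Picv-ses} arises from the logarithm sequence $0\to \mu\to \G_m^{\tt}\to \G_a\to 0$ (or rather from $0\to \wh\G_m\to \G_m\to \G_m/\wh\G_m\to 0$ together with the identification of $R^1\pi_{v\ast}\O$-type terms), so that $\HTlog$ on $\uP_{X,v}$ factors through the composite $R^1\pi_{v\ast}\G_m^{\tt}\to R^1\pi_{v\ast}\G_a \xrightarrow{\HT} H^0(X,\Omega^1(-1))\otimes\G_a$, where the last map is the geometric Hodge--Tate comparison. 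In particular the restriction of $\HTlog$ to $\uP_{X,v}^{\tt}=R^1\pi_{v\ast}\G_m^{\tt}$ is obtained by functoriality from $\log\colon \G_m^{\tt}\to \G_a$. Since the horizontal maps in the square are exactly the inclusions of the $\pi_1(X)$-invariant parts coming from Cartan--Leray (for $\G_m^{\tt}$ on the left/top, for $\G_a$ on the bottom), commutativity of the square is then just naturality of the Cartan--Leray sequence in the coefficient sheaf applied to the map $\log\colon \G_m^{\tt}\to \G_a$.

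Concretely, I would write down the morphism of Cartan--Leray sequences for the $\pi_1(X)$-torsor $\wt X\times Y\to X\times Y$ induced by $\log\colon\G_m^{\tt}\to\G_a$:
\[
\begin{tikzcd}[column sep=small]
0\arrow[r] & H^1_{\cts}(\pi_1(X),\G_m^{\tt}(\wt X\times Y))\arrow[r]\arrow[d,"\log_\ast"] & \Pic_{v}^{\tt}(X\times Y)\arrow[r]\arrow[d,"\log_\ast"] & \Pic_{v}^{\tt}(\wt X\times Y)\arrow[d]\\
0\arrow[r] & H^1_{\cts}(\pi_1(X),\G_a(\wt X\times Y))\arrow[r] & H^1_v(X\times Y,\G_a)\arrow[r] & H^1_v(\wt X\times Y,\G_a)
\end{tikzcd}
\]
Using $\G_m^{\tt}(\wt X\times Y)=\G_m^{\tt}(Y)$ and $\G_a(\wt X\times Y)=\G_a(Y)$ from \cref{p:cohomology-of-wtXxY}, the left column is exactly $\log\colon\HOM(\pi_1(X),\G_m^{\tt})\to\HOM(\pi_1(X),\G_a)$ after sheafification, and the middle column, composed with the projection $H^1_v(X\times Y,\G_a)\to H^0(X,\Omega^1(-1))\otimes\G_a(Y)$, is by construction $\HTlog$ restricted to $\uP_{X,v}^{\tt}$. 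Sheafifying and chasing the square gives commutativity. For surjectivity of the diagonal I would argue that the bottom map $\HT\colon\HOM(\pi_1(X),\G_a)\to H^0(X,\Omega^1(-1))\otimes\G_a$ is surjective --- this is the surjectivity in the Hodge--Tate sequence \eqref{r:HT-seq-splitting}, promoted to $v$-sheaves; indeed $\HOM(\pi_1(X),\G_a)=H^1_v(X,\G_a)/H^1_{\an}(X,\O)$-type description (or more directly $\HOM(\pi_1(X),\G_a)=\HOM(\pi_1(X),\Q_p)\otimes_{\Q_p}\G_a$ since $\G_a$ is uniquely divisible and $\Z_p$-complete, and $\pi_1^{\et}(X,x)^{\mathrm{ab}}\otimes\Q_p$ has $\Q_p$-dimension $d=\dim H^1_{\et}(X,\Q_p)$), and the Hodge--Tate decomposition surjects onto $H^0(X,\Omega^1(-1))$. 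Then the left column $\log$ is surjective as well: on the pro-$p$ part $\log\colon\wh\G_m\to\G_a$ is surjective with kernel $\mu_{p^\infty}$, so $\HOM(T,\G_m)=\HOM(T,\wh\G_m)\to\HOM(T,\G_a)$ is surjective for $T$ the maximal torsionfree pro-$p$ quotient (using $[p]$-surjectivity of $\wh\G_m$ and that $T$ is free over $\Z_p$), and the prime-to-$p$ part of $\pi_1^{\mathrm{ab}}$ contributes nothing to $\HOM(-,\G_a)$. Hence the diagonal, being the common value of two surjections composed appropriately, is surjective.

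The step I expect to be the main obstacle is pinning down precisely that the middle vertical map, after projecting to the Hitchin base, \emph{is} $\HTlog|_{\uP_{X,v}^{\tt}}$ rather than merely \emph{a} lift of it: this requires going back into the construction of the sequence \eqref{seq:Picv-ses} in \cite{heuer-diamantine-Picard} and checking that $\HTlog$ is literally induced by the logarithm on $\G_m^{\tt}\subseteq\G_m$ followed by the geometric Hodge--Tate map $R^1\pi_{v\ast}\O\to H^0(X,\Omega^1(-1))\otimes\G_a$, compatibly with the Cartan--Leray formation; everything else is formal naturality plus the dimension count for $\pi_1^{\et}$. A clean way to package this is to observe that all maps in sight factor through $R^1\pi_{v\ast}(-)$ applied to the commuting square of sheaves $\G_m^{\tt}\hookrightarrow\G_m$, $\log\colon\G_m^{\tt}\to\G_a$, and to note that the Cartan--Leray edge maps are exactly the $R^1\pi_{v\ast}$-terms restricted to $\pi_1$-invariants, so that the asserted square is the image under this functorial machine of a square of sheaf maps that commutes on the nose.
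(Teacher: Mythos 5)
Your proposal is correct and follows essentially the same route as the paper: the left square is commutativity of the Cartan--Leray edge maps under the coefficient map $\log\colon\G_m^{\tt}\to\G_a$ (using $\G_m^{\tt}(\wt X\times Y)=\G_m^{\tt}(Y)$ and $\G_a(\wt X\times Y)=\G_a(Y)$ from \cref{p:cohomology-of-wtXxY}), the right square is exactly the compatibility of $\HTlog$ with $\log$ built into its construction in \cite[Proposition~2.15]{heuer-diamantine-Picard} (the one step you correctly flag as requiring the prequel), and surjectivity of the diagonal is the composite of surjectivity of $\log$ on $\HOM(\pi_1(X),-)$ (via \cref{l:cts-char-variety}) with surjectivity of $\HT$.
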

\begin{proof} By functoriality of the Cartan--Leray  sequence (left square) and the definition of $\HTlog$ in \cite[Proposition~2.15]{heuer-diamantine-Picard} (right square), the following  diagram commutes:
\[
\begin{tikzcd}
	0 \arrow[r] & {\Hom_{\cts}(\pi_1(X),\O^\times(Y))} \arrow[r] \arrow[d,"\log"] & {H^1_v(X\times Y,\G_m^\tt)} \arrow[d,"\log"] \arrow[r,"\HTlog"] & {H^0(X,\Omega^1_X(-1))\otimes \O(Y)} \arrow[d,equal] \\
	0 \arrow[r] & {\Hom_{\cts}(\pi_1(X),\O(Y))} \arrow[r,"\sim"]                  & {H^1_v(X\times Y,\O)} \arrow[r,"\HT"]                & {H^0(X,\Omega^1_X(-1))\otimes \O(Y)}          
\end{tikzcd}\]
This shows commutativity.
 That the left map is surjective follows from \cref{l:cts-char-variety} and surjectivity of $\log:\G_m^\tt\to \G_a$. The map $\HT$ is surjective by \cite[Proposition~2.6]{heuer-diamantine-Picard}.
\end{proof}

We can now put everything together: The diagram from \cref{l:explicit-descr-of-HTlog-on-HOM} combines with \cref{eq:CL-for-G_m} and the sequences from  Theorem~\ref{t:representability-of-v-Picard-functor} and Theorem~\ref{t:tt-torsion-is-1+m-torsors}.1 to a commutative diagram 
\[\begin{tikzcd}
		0\arrow[r]&R^1\pi_{\et\ast}\Ottt \arrow[d,dotted] \arrow[r] & {\uP_{X,\et}} \arrow[d] \arrow[r] & {\uP_{\wt X,\et}} \arrow[d,hook]\\
		0\arrow[r]&{\HOM(\pi_1(X),\G_m)}\arrow[r] \arrow[d,"\log"] \arrow[rd,dashed] & {\uP_{X,v}} \arrow[r] \arrow[d]\arrow[ur,dashed] & {\uP_{\wt X,v}}\\
		&{\HOM(\pi_1(X),\G_a)}\arrow[r,"\HT"]& {H^0(X,\Omega^1_X(-1))\otimes \G_a}&
\end{tikzcd}\]
with left-exact rows. 
By \cref{l:uP_wtX-et-vs-v}, the morphism in the rightmost column is injective. By \cref{l:explicit-descr-of-HTlog-on-HOM}, the bottom dashed diagonal map is surjective. 
 It follows from a diagram chase that  \mbox{$\uP_{X,v}\to \uP_{\wt X,v}$} admits a factorisation through the top dashed arrow.
Applying $-^\tt$, we deduce from \cref{p:Pic-wt-X-is-top-tf} that this map becomes trivial, and we can thus argue as in the first part to deduce from the middle exact sequence  that
\[ \HOM(\pi_1(X),\G_m)= \HOM(\pi_1(X),\G_m)^\tt=\uP_{\wt X,v}^\tt,\]
where the first equality follows from \cref{l:cts-char-variety}. This finishes the proof of Theorem~\ref{t:tt-torsion-is-1+m-torsors}.2.
\end{proof}

\section{The morphism of Deninger--Werner}
We now use Theorem~\ref{t:tt-torsion-is-1+m-torsors} to  give a reinterpretation, a generalisation, and a geometrisation of a construction of Deninger--Werner in the context of the $p$-adic Simpson correspondence.

Let $X$ be a connected smooth projective curve over $\overline{\Q}_p$. We fix a base-point $x\in X(\overline{\Q}_p)$.
In \cite{DeningerWerner_vb_p-adic_curves}, Deninger--Werner construct a functor from a certain category of vector bundles on $X_{\C_p}$ to $\C_p$-linear representations of the \'etale fundamental group $\pi_1(X):=\pi_1^{\et}(X,x)$ of $X$, thus defining a $p$-adic Simpson functor in the case of vanishing Higgs field, or in other words a partial analogue of Narasimhan--Seshadri theory. In \cite{DeningerWerner-lb_and_p-adic-characters}, they go on to study this functor in the case of line bundles under the additional assumption that $X$ has good reduction: In this case, their functor induces an injective continuous homomorphism
\[ \alpha:\uP^{0}_X(\C_p)\to \Hom_{\cts}(\pi_1(X),\C_p^\times)=\Hom_{\cts}(TA^\vee,\C_p^\times),\]
where $A=\uP^{0}_X(\C_p)$ is the Jacobian of $X$ and $TA^\vee$ is the adelic Tate module of its dual, i.e.\ of the Albanese. Deninger--Werner give an explicit description of $\alpha$ in terms of the Weil pairing of $A$, and extend their construction to any connected smooth proper algebraic variety $X$ over $\overline{\Q}_p$ satisfying a certain good reduction assumption (see \cite[\S1.5]{DeningerWerner_vb_p-adic_curves}), obtaining more generally a morphism  $\alpha$ defined  on the $\C_p$-points of the open and closed subgroup  $\uP^{\tau}_X$ of the Picard variety whose image in the N\'eron--Severi group is torsion.

In the case that $X$ is a curve over $\overline{\Q}_p$ with good reduction, Song \cite{song2020rigid} has recently shown that the morphism $\alpha$ can be ``geometrised'', i.e.\ interpreted as the $\C_p$-points of a morphism of rigid group varieties.

\subsection{Geometrisation of Deninger--Werner's morphism}

We now generalise and geometrise the results of Deninger--Werner and Song to any smooth proper rigid space $X$ of arbitrary dimension over any complete algebraically closed field $K$ over $\Q_p$. For this we use a completely different method based on  \cref{t:representability-of-v-Picard-functor} and \cref{t:tt-torsion-is-1+m-torsors}.
Namely, have the following result, which includes a topological torsion version of \cref{t:representability-of-v-Picard-functor}:

\begin{Theorem}\label{t:geometric-Simpson}
	Let $X$ be a connected smooth proper rigid space over $K$. Then the topological torsion Picard functor
	\[\uP_{X,\et}^{\tt}=R^1\pi_{\et\ast}\Ottt\]
	is always represented by a disjoint union of divisible rigid analytic groups. It fits into a natural short exact sequence of rigid group varieties
	\begin{equation}\label{seq:Picv-tt-ses}
	0\to \uP_{X,\et}^{\tt}\to \HOM(\pi_1(X),\G_m)\xrightarrow{\HTlog} H^0(X,\Omega^1_X(-1))\otimes \G_a\to 0
	\end{equation}
 that is functorial in $X\to \Spa(K)$.
It is canonically isomorphic to the topological torsion part of the exact sequence \cref{seq:Picv-ses} in  \cref{t:representability-of-v-Picard-functor}. The induced exact sequence  on Lie algebras, i.e.\ tangent spaces at the identity, is canonically identified with the Hodge--Tate sequence
\[0\to H^1_{\an}(X,\O)\to H^1_{\et}(X,\Q_p)\otimes_{\Q_p} K\xrightarrow{\HT} H^0(X,\Omega^1(-1))\to 0.\]
The first map of \cref{seq:Picv-tt-ses} induces a natural non-degenerate pairing of adic groups
	\[ \uP_{X,\et}^{\tt}\times \pi_1(X)\to \G_m.\]
	If $\uP_{X,\et}^{\tau}$ is an abeloid variety, this is the unique analytic continuation of the Weil pairing.
\end{Theorem}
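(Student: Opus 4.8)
### Proof plan for Theorem 4.1 (and its final statements)

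The plan is to assemble the short exact sequence $(\ref{seq:Picv-tt-ses})$ by applying the topological torsion functor $(-)^\tt = \HOM(\whZ,-)$ to the sequences already in hand, namely $(\ref{seq:Picv-ses})$ from Theorem~\ref{t:representability-of-v-Picard-functor} and the left-exact sequence of Theorem~\ref{t:tt-torsion-is-1+m-torsors}, and then reading off representability and the Lie algebra statement. Concretely: Theorem~\ref{t:tt-torsion-is-1+m-torsors}.2 already identifies $\uP_{X,v}^\tt$ with $\HOM(\pi_1(X),\G_m)$, which is representable by a rigid group with identity component $\wh\G_m^d$ by Lemma~\ref{l:cts-char-variety}. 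Applying $\HOM(\whZ,-)$ to $(\ref{seq:Picv-ses})$, which is left-exact by Lemma~\ref{l:tt-left-exact} and Lemma~\ref{l:preserve-inj}, gives $0\to\uP_{X,\et}^\tt\to\uP_{X,v}^\tt\to(H^0(X,\Omega^1(-1))\otimes\G_a)^\tt$; since $\G_a^\tt=\G_a$ by Example~\ref{ex:interpretation-of-wh B-for-B-of-good-reduction}, the last term is all of $\mathcal A$. Surjectivity onto $\mathcal A$ is exactly the content of Lemma~\ref{l:explicit-descr-of-HTlog-on-HOM} (the dashed diagonal is surjective because $\log\colon\G_m^\tt\to\G_a$ is surjective and $\HT$ is surjective), so we obtain the exact sequence $(\ref{seq:Picv-tt-ses})$. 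Representability of $\uP_{X,\et}^\tt$ then follows because it is the kernel of a map of rigid groups $\HOM(\pi_1(X),\G_m)\to\mathcal A$ (kernels of morphisms of rigid groups are rigid groups); divisibility of the components follows from the sequence, since $\mathcal A$ and $\HOM(\pi_1(X),\G_m)^0=\wh\G_m^d$ are divisible and $[n]$ is surjective on the component group by the structure in Lemma~\ref{l:hat-vs-p-divisible-groups}.3 and Lemma~\ref{l:cts-char-variety}.

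For the Lie algebra statement I would pass to tangent spaces at the identity. Since $(-)^\tt$ of a rigid group is an open subgroup by Lemma~\ref{l:hat-vs-p-divisible-groups}, taking $(-)^\tt$ does not change Lie algebras, so the Lie algebra sequence of $(\ref{seq:Picv-tt-ses})$ agrees with that of $(\ref{seq:Picv-ses})$. One then identifies $\mathrm{Lie}(\uP_{X,\et})=H^1_{\an}(X,\O)$ (the tangent space of the Picard functor is $H^1(X,\O)$, the standard deformation-theoretic computation, valid diamantinely by \cite{heuer-diamantine-Picard}), $\mathrm{Lie}(\mathcal A)=H^0(X,\Omega^1(-1))$ since $\mathcal A$ is a vector group, and $\mathrm{Lie}\,\HOM(\pi_1(X),\G_m)=\Hom_{\cts}(\pi_1(X),\mathrm{Lie}\,\G_m)=\Hom_{\cts}(\pi_1(X),K)=H^1_{\et}(X,\Q_p)\otimes_{\Q_p}K$ using that $\pi_1(X)^{\mathrm{ab}}$ has $\Z_p$-rank $d=\dim H^1_{\et}(X,\Q_p)$ as in Lemma~\ref{l:cts-char-variety}. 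That the connecting map is the Hodge--Tate map $\HT$ is exactly the compatibility already recorded in the lower square of the diagram in the proof of Lemma~\ref{l:explicit-descr-of-HTlog-on-HOM}, so no new computation is needed.

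For the pairing, the first map of $(\ref{seq:Picv-tt-ses})$ is $\uP_{X,\et}^\tt=R^1\pi_{\et\ast}\Ottt\hookrightarrow\HOM(\pi_1(X),\G_m)$; by the tensor-hom adjunction a map into an internal Hom is the same as a bilinear map, so this is precisely a pairing $\uP_{X,\et}^\tt\times\pi_1(X)\to\G_m$. Non-degeneracy on the $\pi_1$-side means the map $\uP_{X,\et}^\tt\to\HOM(\pi_1(X),\G_m)$ is injective, which is the left-exactness just established; non-degeneracy on the other side means that no nontrivial element of $\pi_1(X)$ pairs trivially with all of $\uP_{X,\et}^\tt$, equivalently $\HOM(\pi_1(X),\G_m)$ is not killed by $\HOM(\pi_1(X)',\G_m)$ for a proper quotient — here I would use that $\uP_{X,\et}^\tt$ contains enough torsion points, via Theorem~\ref{t:tt-torsion-is-1+m-torsors}.1 and the fact that $\pi_1^{\mathrm{ab}}$ is recovered from the $N$-torsion of $\uP_{\wt X}$-type data, i.e.\ reduce to the statement that $\pi_1^{\mathrm{ab}}$ acts faithfully on $\varinjlim_N\uP_{X,\et}[N]$, which for abeloid $\uP^0$ is the classical fact that the Tate module of the dual abelian variety carries a faithful Galois/monodromy action via the Weil pairing. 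Finally, to identify the pairing with the Weil pairing when $\uP_{X,\et}^\tau$ is abeloid: both are continuous bilinear maps $\uP_{X,\et}^\tt\times\pi_1(X)\to\G_m$, and restricted to torsion points $\uP_{X,\et}[N]\times\pi_1(X)\to\mu_N$ they must agree, because on $N$-torsion the Cartan--Leray boundary map defining our pairing unwinds to the classical Kummer-theoretic pairing, which is the Weil pairing by functoriality of Cartan--Leray and the definition of the Weil pairing on an abeloid (as in \cite{Lutkebohmert_RigidCurves}). Since $\uP_{X,\et}^\tt$ is divisible with Zariski-dense torsion and $\G_m$ is separated, a continuous homomorphism out of it is determined by its restriction to torsion, giving uniqueness of the analytic continuation and hence the identification.

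\medskip

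The main obstacle I expect is the last point: showing that our Cartan--Leray pairing, which is built abstractly from $R^1\pi_{\et\ast}\Ottt$ and the torsor $\wt X\to X$, literally restricts on $N$-torsion to the classical Weil pairing, and that torsion is dense enough in $\uP_{X,\et}^\tt$ to pin down the analytic continuation uniquely. Density of torsion is fine when $\uP^0$ is abeloid — the $N$-torsion for $N$ prime to $p$ is Zariski-dense, and together with $\wh\G_m$-directions one gets a dense subgroup — but in the non-abeloid case $\uP^0$ need not be an abelian variety (cf.\ the Hopf surface remark), so the cleanest argument only claims the Weil-pairing identification under the stated abeloid hypothesis. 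The comparison on $N$-torsion itself is a diagram chase comparing the Kummer sequence $1\to\mu_N\to\G_m\xrightarrow{N}\G_m\to1$ with the torsor $\wt X\to X$ against the definition of the Weil pairing as a cup product on $X$; this is routine but needs care to match signs and the two a priori different sources of the $\mu_N$.
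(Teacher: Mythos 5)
Your overall architecture (apply $-^\tt$ to \eqref{seq:Picv-ses}, identify the middle term via Theorem~\ref{t:tt-torsion-is-1+m-torsors}.2, get surjectivity from Lemma~\ref{l:explicit-descr-of-HTlog-on-HOM}, read off Lie algebras from the $\log$-diagram) matches the paper. But two steps as written contain genuine gaps.

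First, left-exactness. You claim the sequence of topological torsion subsheaves is left-exact ``by Lemma~\ref{l:tt-left-exact} and Lemma~\ref{l:preserve-inj}''. Neither lemma says this, and the paper explicitly warns that $-\langle p^\infty\rangle$ (hence $-^\tt$) is neither left- nor right-exact in general: $F^\tt$ is the \emph{image} of the evaluation map, not $\HOM(\whZ,F)$ itself, so exactness at the middle term does not follow from left-exactness of the internal Hom. The paper instead uses that \eqref{seq:Picv-ses} is split over the open subgroup $\mathcal A^+\subseteq\mathcal A$ (by \cite[Theorem~2.7.3]{heuer-diamantine-Picard}), so the additive functor $-^\tt$ preserves its exactness. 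Your route can be repaired, but the ingredient you actually need is that $\mathcal A\cong\G_a^n$ is \emph{strongly} topologically torsion (Proposition~\ref{l:hat-vs-p-divisible-groups}.1), i.e.\ $\mathrm e_{\mathcal A}$ is injective, so that a section of $\HOM(\whZ,\uP_{X,v})$ whose evaluation at $1$ lands in $\uP_{X,\et}$ already factors through $\HOM(\whZ,\uP_{X,\et})$. As cited, the step is unjustified.

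Second, uniqueness of the analytic continuation of the Weil pairing. Your argument that ``$\uP_{X,\et}^\tt$ is divisible with Zariski-dense torsion \ldots so a continuous homomorphism out of it is determined by its restriction to torsion'' is false: $\mu_{p^\infty}$ is \emph{not} Zariski dense in $\wh\G_m$ (the roots of unity accumulate only at the boundary of the open disc, and $\log\colon\wh\G_m\to\G_a$ is a nonzero homomorphism killing all of them); likewise $A[p^\infty]\subseteq A\tpt$ is the zero locus of $\log$ and hence a proper analytic subset. The conclusion happens to be true for target $\G_m$, but for a different reason: a homomorphism killing torsion factors through $A\tpt/A[p^\infty]\cong\mathrm{Lie}(A)\otimes\G_a$ by Proposition~\ref{l:hat-vs-p-divisible-groups}.2, and $\Hom(\G_a,\G_m)=0$. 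The paper instead invokes full faithfulness of Fargues' functor from $p$-divisible groups to analytic $p$-divisible groups. Relatedly, for the identification itself the paper does not run a Kummer-theoretic cocycle computation: it first reduces to abeloid $X$ via the rigid Albanese (a reduction you omit, and which is needed to relate $\pi_1(X)$ to $TA$), then constructs the pairing from the Poincar\'e bundle, whose reduction of structure group to $\G_m^\tt$ over $A\times A^{\vee\tt}$ is supplied by Theorem~\ref{t:tt-torsion-is-1+m-torsors}.1 and which trivialises on $\wt A$ by the same theorem; comparison with diagram \eqref{eq:Poincare-bundle-vs-finite-Weil-pairing} then gives the agreement on $N$-torsion essentially for free. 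Your sketch of this comparison is plausible in spirit but is the least developed part of the proposal, as you acknowledge.
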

The first morphism in \cref{{seq:Picv-tt-ses}} generalises the map $\alpha$ of Deninger--Werner and Song. Indeed, if $\uP^{\tau}_{X,\et}$ is abeloid and $K=\C_p$, then by Lemma~\ref{l:tt-over-C_p}, we have $\uP_{X,\et}^{\tt}(\C_p)=\uP_{X,\et}^{0}(\C_p)$.

But this is no longer true over more general fields, where we need to use the topological torsion subsheaf rather than all of $\uP_{X,\et}^\tau$ to define $\alpha$.
The last part is our analogue of the statement in \cite[\S4]{DeningerWerner-lb_and_p-adic-characters} that $\alpha$ is an extension of the Weil pairing in the case of curves.

\begin{Remark}
	We note that in contrast to \cite[Theorem~1.0.1]{song2020rigid}, the domain of our morphism is not $\uP_{X,\et}$ but the open subspace $\uP_{X,\et}^{\tt}\subseteq \uP_{X,\et}$.  For curves over $\C_p$, both spaces have the same $\C_p$-points, but the latter has more points when considered as an adic space. 
	In particular,  the assumptions of \cite[Theorems~1.0.1 and 1.0.3]{song2020rigid} are never satisfied. Indeed, note that any morphism from the proper space $\uP^0_{X,\et}$ to $\G_m$ is trivial. That said, for curves, one can use Song's construction to explicitly define a map on $\uP_{X,\et}^{\tt}$.
\end{Remark}

\begin{Remark}\label{rm:complex-Simpson}
	As pointed out in \cref{r:Remark-comparison-to-complex}, in the setting of curves,
	the analogue of the short exact sequence of Theorem~\ref{t:geometric-Simpson} in the complex analytic setting of the Corlette--Simpson correspondence
is only real-analytic. This difference to the $p$-adic case can in part be attributed to the fact that the pro-\'etale fundamental group appearing in Theorem~\ref{t:geometric-Simpson} is profinite and thus ignores the second factor in the decomposition $\C_p^\times=\O_{\C_p}^\times\times \Q$.
\end{Remark}

\begin{proof}[Proof of \cref{t:geometric-Simpson}]
	The exact sequence arises from the one in Theorem~\ref{t:representability-of-v-Picard-functor} by applying the functor  $-^\tt$ from \cref{s:tt-sheaves}. To see that this preserves exactness, we use that by \cite[Theorem~2.7.3]{heuer-diamantine-Picard}, this sequence is split over the open subgroup 
	\[\mathcal A^{+}\subseteq \mathcal A:=H^0(X,\Omega^1)(-1)\otimes_K \G_a\]
	 defined by the image of $\HOM(\pi_1(X),p\G_a^+)$ under the morphism $\HT:\HOM(\pi_1(X),\G_a)\to \mathcal A$.
	More explicitly, let $\uP_{X,v}^+$ be the pullback under $\HTlog$ of $ \mathcal A^+$, then the induced sequence
	\[0\to \uP_{X,\et}\to \uP_{X,v}^+\to \mathcal A^+\to 0\]
	is split, and in particular exactness is preserved by the additive functor $-^{\tt}$. This shows left-exactness. 
	Right-exactness follows from \cref{l:explicit-descr-of-HTlog-on-HOM}.
	
	In order to deduce the description of the induced map on tangent spaces, we note that adding kernels to \cref{l:explicit-descr-of-HTlog-on-HOM} induces a morphism of short exact sequences:
		\[\begin{tikzcd}
		0\to {\uP_{X,\et}^{\tt}}\arrow[r]\arrow[d,dotted,"\log"]&{\HOM(\pi_1(X),\G_m)}\arrow[r,"\HTlog"] \arrow[d,"\log"] & {H^0(X,\Omega^1_X(-1))\otimes \G_a} \arrow[d,equal]\arrow[r]&0\\
		0\to {H^1_{\an}(X,\O)\otimes \G_a}\arrow[r]&{\HOM(\pi_1(X),\G_a)}\arrow[r,"\HT"]& {H^0(X,\Omega^1_X(-1))\otimes \G_a}\arrow[r]&0
	\end{tikzcd}\]
	Since the middle arrow is an isomorphism in a neighbourhood of the identity (because $\log$ is), this induces an isomorphism between the associated sequences on tangent spaces.

	Finally, the pairing is tautologically associated to the map  $ \uP_{X,\et}^{\tt}\to \HOM(\pi_1(X),\G_m)$.
	It remains to explain why this pairing is an analytic continuation of the Weil pairing if $\uP_{X,\et}^{0}$ is abeloid. Via the theory of the rigid Albanese variety developed in \cite[\S4]{HansenLi_HodgeSymmetry}, and using the functoriality of Theorem~\ref{t:representability-of-v-Picard-functor}, we can reduce to the case of abeloid $X$. Describing the pairing in terms of the Weil pairing in the abeloid case is the goal of the next subsection. 
	
	Before, we give two Corollaries. By comparing to \cref{t:representability-of-v-Picard-functor}, we immediately see:
	\begin{Corollary}\label{c:pushout-relation}The map
		$\HTlog:\uP_{X,v}\to \mathcal A$ is the \'etale $\uP_{X,\et}$-torsor given by pushout of the  $\uP_{X,\et}^{\tt}$-torsor $\HOM(\pi_1(X),\G_m)\to \mathcal A$ from \cref{seq:Picv-tt-ses} along the morphism $\uP_{X,\et}^{\tt}\to\uP_{X,\et}$. 
	\end{Corollary}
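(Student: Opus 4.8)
The plan is to realise the two sequences as the rows of a morphism of short exact sequences of abelian sheaves on $\Perf_{K,\et}$ which is the identity on the common cokernel $\mathcal A$, and then to appeal to the general principle that such a morphism exhibits the lower extension as the pushout of the upper one along the induced map of kernels.

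First I would assemble the diagram whose bottom row is $0\to \uP_{X,\et}\to \uP_{X,v}\xrightarrow{\HTlog}\mathcal A\to 0$ from \cref{t:representability-of-v-Picard-functor} and whose top row is the sequence \eqref{seq:Picv-tt-ses} of \cref{t:geometric-Simpson}. The middle vertical arrow is the inclusion $\HOM(\pi_1(X),\G_m)=\uP_{X,v}^\tt\hookrightarrow \uP_{X,v}$ provided by \cref{t:tt-torsion-is-1+m-torsors}.2, the right vertical arrow is $\id_{\mathcal A}$, and the left vertical arrow is the natural inclusion $\uP_{X,\et}^\tt\hookrightarrow \uP_{X,\et}$. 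Commutativity of the right-hand square is precisely the fact that the map $\HTlog$ in \eqref{seq:Picv-tt-ses} is the restriction of the map $\HTlog$ on $\uP_{X,v}$ along this inclusion, which is how the former was constructed in the proof of \cref{t:tt-torsion-is-1+m-torsors}.2 together with \cref{l:explicit-descr-of-HTlog-on-HOM}; commutativity of the left-hand square is tautological, since $\uP_{X,\et}^\tt$ is topologically torsion and hence its image under $\uP_{X,\et}\hookrightarrow\uP_{X,v}$ lies in $\uP_{X,v}^\tt$, this factorisation being by construction the left vertical arrow of \eqref{seq:Picv-tt-ses} (equivalently, $\uP_{X,\et}^\tt=\uP_{X,\et}\cap\uP_{X,v}^\tt$ inside $\uP_{X,v}$).

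Then I would finish by formal nonsense. Writing $P$ for the pushout $\uP_{X,\et}\oplus_{\uP_{X,\et}^\tt}\HOM(\pi_1(X),\G_m)$ in abelian sheaves on $\Perf_{K,\et}$, one has a short exact sequence $0\to \uP_{X,\et}\to P\to\mathcal A\to 0$ whose underlying $\uP_{X,\et}$-torsor over $\mathcal A$ is, tautologically, the pushout of the $\uP_{X,\et}^\tt$-torsor $\HOM(\pi_1(X),\G_m)\to\mathcal A$ along $\uP_{X,\et}^\tt\to\uP_{X,\et}$; since that torsor is étale-locally trivial, so is $P\to\mathcal A$. The universal property of the pushout, applied to $\uP_{X,\et}\hookrightarrow\uP_{X,v}$ and $\HOM(\pi_1(X),\G_m)\hookrightarrow\uP_{X,v}$ (which agree on $\uP_{X,\et}^\tt$), produces a canonical map $P\to\uP_{X,v}$ which is a morphism of the two displayed short exact sequences, restricting to the identity on $\uP_{X,\et}$ and on $\mathcal A$, and the five lemma upgrades it to an isomorphism $P\isomarrow\uP_{X,v}$ of extensions, hence of étale $\uP_{X,\et}$-torsors over $\mathcal A$. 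This is exactly the assertion. The only point that needs a little care is the bookkeeping showing that the two occurrences of $\HTlog$ are compatible, i.e.\ that the right-hand square commutes; but this is already contained in the proof of \cref{t:tt-torsion-is-1+m-torsors}.2, so once the morphism of extensions is in place the argument is purely formal and I foresee no real obstacle.
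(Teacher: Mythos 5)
Your proposal is correct and is essentially the paper's argument made explicit: the paper treats the corollary as immediate from the fact that \eqref{seq:Picv-tt-ses} is obtained from \eqref{seq:Picv-ses} by applying $-^\tt$, which yields exactly the morphism of short exact sequences (identity on $\mathcal A$) that you write down, and the rest is the standard pushout-of-extensions formalism you describe.
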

	
	\begin{Corollary}\label{c:torsors-not-split}
		The sequences \eqref{seq:Picv-ses} and \eqref{seq:Picv-tt-ses}  are never split unless $H^0(X,\Omega_X^1)=0$.
	\end{Corollary}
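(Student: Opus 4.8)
The plan is to deduce both non-splitting statements from the single observation that there is no nonzero homomorphism of rigid groups $\G_a\to\G_m$ over $K$. So I would assume $H^0(X,\Omega^1_X)\neq 0$; since the Tate twist preserves dimensions this means $\mathcal A:=H^0(X,\Omega^1_X(-1))\otimes_K\G_a\neq 0$, and choosing $0\neq\omega\in H^0(X,\Omega^1_X(-1))$ gives an embedding of vector groups $\iota_\omega\colon\G_a\hookrightarrow\mathcal A$, $a\mapsto\omega\otimes a$.

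First I would record the auxiliary fact. A homomorphism $\G_a\to\G_m$ over the algebraically closed field $K$ is an invertible entire function $f$ on $\mathbb A^1_K$ satisfying $f(x+y)=f(x)f(y)$ and $f(0)=1$; an invertible --- hence zero-free --- entire function on an algebraically closed non-archimedean field is a nonzero constant (Newton polygon), so $f\equiv 1$. Since $\G_a$ is connected, any homomorphism $\G_a\to\HOM(\pi_1(X),\G_m)$ has image in the identity component, which is $\wh{\G}_m^d\subseteq\G_m^d$ by \cref{l:cts-char-variety}; projecting to the factors, it is therefore zero.

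Next I would show that \eqref{seq:Picv-tt-ses} does not split: a splitting would be a homomorphism $s\colon\mathcal A\to\HOM(\pi_1(X),\G_m)$ with $\HTlog\circ s=\id_{\mathcal A}$, but then $s\circ\iota_\omega\colon\G_a\to\HOM(\pi_1(X),\G_m)$ vanishes by the previous step, forcing $\iota_\omega=\HTlog\circ s\circ\iota_\omega=0$, which is absurd since $\omega\neq 0$.

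Finally I would deduce that \eqref{seq:Picv-ses} does not split, not by a direct argument but by reduction to the topological torsion case: by \cref{t:geometric-Simpson} the sequence \eqref{seq:Picv-tt-ses} is canonically the image of \eqref{seq:Picv-ses} under the functor $(-)^\tt$ of \S\ref{s:tt-sheaves} (here one uses $\uP_{X,v}^\tt=\HOM(\pi_1(X),\G_m)$ from \cref{t:tt-torsion-is-1+m-torsors} and that $\mathcal A$, being a finite product of copies of $\G_a$, is topologically torsion by \cref{ex:interpretation-of-wh B-for-B-of-good-reduction}); since $(-)^\tt$ is right adjoint to the forgetful functor (\cref{l:tt-left-exact}) it is additive, hence preserves biproducts and so takes split short exact sequences to split short exact sequences. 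Thus a splitting of \eqref{seq:Picv-ses} would give a splitting of \eqref{seq:Picv-tt-ses}, contradicting the previous paragraph. The only genuinely substantive input is the elementary fact about $\G_a\to\G_m$; the one point requiring care is that ``split'' for \eqref{seq:Picv-ses} is meant in the category of abelian $v$-sheaves and that \cref{t:geometric-Simpson} identifies \eqref{seq:Picv-tt-ses} with the $\tt$-part of \eqref{seq:Picv-ses} as an isomorphism of short exact sequences, maps included --- with that in hand the last reduction is immediate.
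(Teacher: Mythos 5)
Your proof is correct and takes essentially the same route as the paper: the paper also rules out a splitting of \eqref{seq:Picv-tt-ses} by a Liouville-type argument (any morphism from the affine rigid space $\mathcal A$ to a bounded subspace of $\G_m^d$ is trivial) and then deduces the non-splitting of \eqref{seq:Picv-ses} by applying the functor $-^\tt$. Your restriction to a line $\G_a\hookrightarrow\mathcal A$ and appeal to the constancy of invertible entire functions is only a cosmetic variant of the same idea.
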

	\begin{proof}
		Any splitting of \eqref{seq:Picv-tt-ses} would be a morphism $\mathcal A\to \HOM(\pi,\G_m)$ from an affine rigid space to a bounded subspace of $\G_m^d$ for some $d\in \N$. But any such morphism is trivial.
		
		The statement for  \eqref{seq:Picv-ses}  follows as we obtain  \eqref{seq:Picv-tt-ses} from \eqref{seq:Picv-ses}  by applying the functor $-^\tt$.
	\end{proof}
	
\subsection{The Weil pairing from a pro-\'etale perspective}
It remains to explain why the pairing from Theorem~\ref{t:geometric-Simpson} can be described as a unique analytic continuation of the Weil pairing if $X$ is abeloid. This is a pleasant application of Theorem~\ref{t:tt-torsion-is-1+m-torsors} in its own right.
\begin{Remark}
We note that it has long been known that such analytic Weil pairings exists: For abelian varieties of good reduction this is constructed in \cite[\S4]{tate1967p}. Another related construction is by Fontaine \cite[Proposition~1.1]{FontainePresqueCp} who gives for any abelian variety $A$ over a  finite extension $L$ of $\Q_p$ a natural continuous Galois-equivariant homomorphism that reinterpreted in our notation (via Lemma~\ref{l:tt-over-C_p} and Proposition~\ref{l:hat-vs-p-divisible-groups}) is of the form
\[A\tpt(\overline L)\to T_pA(-1)\otimes (1+\mathfrak m_{\C_p})=\HOM(T_pA^\vee,\G_m).\]
By comparing Fontaine's construction to ours, it is clear that (the topological $p$-torsion part of) the analytic Weil pairing from \cref{t:geometric-Simpson} is a geometrisation of Fontaine's map.
\end{Remark}

Let $A$ be an abeloid variety over $K$. The Weil pairing of $A$ is the perfect pairing 
\[\mathbf e_N:A[N]\times A^\vee[N]\to \mu_{N}\] defined as follows: Let $\mathcal P$ be the Poincar\'e bundle on $A\times A^\vee$, then by bilinearity  its restriction to $A\times A^\vee[N]$ has trivial $N$-th tensor power. It follows that $\mathcal P$ admits a canonical reduction of structure group to a $\mu_N$-torsor $\mathcal P_N$ on $A\times A^\vee$ that becomes trivial after pullback along $[N]:A\to A$. Consequently, we have a natural commutative diagram of bi-extensions
\begin{equation}\label{eq:Poincare-bundle-vs-finite-Weil-pairing}
	\begin{tikzcd}
		0 \arrow[r] &A[N]\times A^\vee[N]\arrow[r]\arrow[d,"\mathbf e_N"]& A\times A^\vee[N] \arrow[d] \arrow[r,"{[N]\times \id}"] & A\times A^\vee[N] \arrow[d, equal] \arrow[r] & 0 \\
		0 \arrow[r] &\mu_{N} \arrow[r] & \mathcal P_N \arrow[r] & A\times A^\vee[N]\arrow[r] & 0
	\end{tikzcd}
\end{equation}
and the Weil pairing is defined as the induced arrow on the left.

We can now extend this construction using the universal cover $\wt A$ of $A$ from \cref{s:profet-cover}. In the abeloid case, this is given by the space $\wt A=\textstyle\varprojlim_{[N],N\in\N}A$ and thus sits in an exact sequence
\[ 0\to TA\to \wt A\to A\to 0\]
where $TA=\pi^\et_1(A,0)=\varprojlim_{N\in \N} A[N]$ is the Tate module of $A$. We now restrict to the open subspace $A\times A^{\vee\tt}\subseteq A\times A^\vee$. By Theorem~\ref{t:tt-torsion-is-1+m-torsors}.1, this is the locus where $\mathcal P$ admits a reduction of structure group to a $\Ottt$-torsor $\wh{\mathcal P}$, which inherits from $\mathcal P$ the structure of a bi-extension
\[ 1\to \G_m^\tt\to \wh{\mathcal P}\to A\times A^{\vee\tt}\to 0.\]
Concretely, this means that it becomes an exact sequence of abelian sheaves on $\mathrm{LSD}_{A}$ and $\mathrm{LSD}_{A^{\vee\tt}}$ respectively when restricted to test objects relatively over either factor.

By Theorem~\ref{t:tt-torsion-is-1+m-torsors}.1, this bi-extension becomes trivial upon pullback to $\wt A$. Using the short exact sequence
of $\wt A$,
 we thus obtain a pushout diagram of adic groups relatively over  $A^{\vee\tt}$

	\begin{center}
	\begin{tikzcd}
		0 \arrow[r] &TA\times A^{\vee\tt} \arrow[r] \arrow[d, "\mathbf e"'] & \wt A\times  A^{\vee\tt} \arrow[d] \arrow[r] & A\times  A^{\vee\tt} \arrow[d, equal] \arrow[r] & 0 \\
		0 \arrow[r] &\G^\tt_m \arrow[r] & \mathcal P \arrow[r] & A\times  A^{\vee\tt}\arrow[r] & 0
	\end{tikzcd}
\end{center}
where the middle arrow exists because $\mathcal P$ becomes split on $\wt A\times  A^{\vee\tt}$, and is unique because any two morphisms differ by a map from $\wt A$ to $\G_m^\tt$, which has to be trivial by \cref{p:cohomology-of-wtXxY}.

On the left, we thus obtain the desired bilinear pairing
$\mathbf{e}:TA\times A^{\vee\tt}\to \G^\tt_m$.
It is clear from the construction via pullback to $\wt A$ that this is precisely the pairing from Theorem~\ref{t:geometric-Simpson}.
On the other hand, it is also clear from comparing to the pullback of \eqref{eq:Poincare-bundle-vs-finite-Weil-pairing} to $\wt A$ that this gives an analytic continuation of the adelic Weil pairing
\[ (\mathbf{e}_N)_{N\in\N}:TA\times \textstyle\varinjlim_{N\in\N}A^\vee[N]\to \mu.\]
It remains to see that this property determines $\mathbf{e}$ uniquely: For this we use that by Proposition~\ref{l:hat-vs-p-divisible-groups}, for each $x\in TA$ the morphism $\mathbf{e}(x,-)$ is uniquely determined by its value on prime-to-$p$ torsion and a morphism on the analytic $p$-divisible group $ A\tpt\to\G_m\tpt$. By fully faithfulness of Fargues' functor from $p$-divisible groups to analytic $p$-divisible groups \cite[Théorème~6.1]{Fargues-groupes-analytiques}, this is uniquely determined on $p$-torsion points, as desired.
\end{proof}

\begin{Remark}\label{s:tt-Pic-intro}
As an application of \cref{t:geometric-Simpson}, we point out that the geometric Simpson correspondence can in turn be used to study classical rigid analytic Picard functors: 

It is expected that for any smooth proper rigid space $X$, the Picard functor $\uP_{X,\et}$ is represented by a rigid group, and many instances of this are known, see \cite[\S1]{heuer-diamantine-Picard}. In all known cases, one additionally has the structural result that the  identity component $\uP_{X,\et}^\circ$  is a semi-abeloid variety, i.e.\ an extension of an abeloid  by a rigid torus, see especially \cite{HartlLutk}. But  it  currently seems an open question if one should expect such a description in general.

From this perspective, \cref{t:geometric-Simpson} gives the new result that a topological torsion variant of the Picard functor is always representable.
In particular, if $\uP_{X,\et}$ is representable by a rigid group, this describes its open topological torsion subgroup. From this we can deduce structural results on $\uP_{X,\et}^{\tt}$ which provide evidence that $\uP_{X,\et}^\circ$ might always be representable by a semi-abeloid variety. Namely, \cref{t:geometric-Simpson} imposes concrete structural restrictions on what rigid groups can appear as Picard varieties, like existence of an analytic Weil pairing on topological torsion, which are consistent with $\uP_{X,\et}^{\circ}$ being semi-abeloid.

As a basic example, we recover \cite[Lemma 3.1]{HartlLutk}, which says
$\Hom(\G_a,\uP_{X,\et})=0$.
Indeed, as $\G_a=\G_a^\tt$, any such map factors through $\uP^\tt_{X,\et}\hookrightarrow \Hom(\pi_1(X),\G_m)$, but $\Hom(\G_a,\G_m)=0$. Similarly, if $\uP_{X,\et}$ was an open unit ball with additive structure, this would contradict it being the kernel of a map from $ \Hom(\pi_1(X),\G_m)$ to an affine group.
\end{Remark}
\section{Comparison of analytic moduli spaces}
In this final section, we use our main theorems to show that the $p$-adic Simpson correspondence in rank one admits a geometric description in terms of a comparison of moduli spaces. This also proves \cref{t:intro-Corollary-Cp} and explains the necessary choices in a geometric way.

As before, let $X$ be a connected smooth proper rigid space $X$ over an algebraically closed complete extension $K$ of $\Q_p$ and choose a base-point $x\in X(K)$ to define $\pi_1(X):=\pi_1^{\et}(X,x)$. Let us for simplicity assume in this section that the classical rigid analytic Picard functor is representable (see \cref{s:tt-Pic-intro}).
As an application of \cref{t:tt-torsion-is-1+m-torsors} and \cref{t:geometric-Simpson}, we can now define rigid analytic moduli spaces on both sides of the $p$-adic Simpson correspondence, in very close analogy to Simpson's complex analytic moduli spaces of rank one \cite[\S2]{ModuliRankOne}:
\begin{Definition}
	\begin{enumerate}
		\item The coarse moduli space of $v$-line bundles on $X$ is
		\[\Bun_{v,1}:=\uP_{X,v}\]
		\item   Write $\mathcal A:=H^0(X,\Omega^1(-1))\otimes \G_a$,  this is the Hitchin base of rank one. Then
		\[\Higgs_1:=\uP_{X,\et}\times \mathcal A\]
		is the
		coarse moduli space of Higgs line bundles on $X$.
		\item We define the \textbf{Betti moduli space} to be the character variety of \cref{l:cts-char-variety}, 
		\[\MB:=\HOM(\pi_1(X),\G_m)=\uP_{X,v}^{\tt}\subseteq \uP_v,\]
		 which is the moduli space
		of characters of $\pi_1(X)$. Via \cref{t:tt-torsion-is-1+m-torsors}.2, we can equivalently see this as the coarse moduli space of topologically torsion $v$-line bundles.
		\item We define the \textbf{Dolbeault moduli space} to be the coarse moduli space
		 \[\MD:=\uP_{X,\et}^{\tt}\times \mathcal A \subseteq \uHiggs_1\]
		 of topological torsion Higgs line bundles on $X$.
	\end{enumerate}
\end{Definition}
While a priori defined as $v$-sheaves on $\Perf_K$,
\cref{t:representability-of-v-Picard-functor}, \cref{t:geometric-Simpson} and \cref{l:cts-char-variety} combine to show that all of the above moduli functors are represented by smooth rigid spaces. This allows us to investigate a new perspective on the $p$-adic Simpson correspondence, namely whether it admits a description in terms of moduli spaces.

This turns out to be the case: In this section, we note three slightly different ways in which we can compare $\Bun_{v,1}$ to $\Higgs_1$, respectively $\MB$ to $\MD$. The basic idea for this comparison is that $\HTlog$ on the Betti side, and the projection to $\mathcal A$ on the Dolbeault side, define a diagram
\[ \begin{tikzcd}[row sep = 0cm,column sep = 0.5cm]
	\MB \arrow[r, hook] & {\Bun_{v,1}} \arrow[rd, "\HTlog"] &  \\
	&  & \mathcal A \\
	\MD \arrow[r, hook] & \Higgs_1 \arrow[ru] & 
\end{tikzcd}\]
in which the second column consists of $\uP_{X,\et}$-torsors, and the first column consists of  $\uP^\tt_{X,\et}$-torsors. While the torsors on the bottom line are split,  the torsors on the top line are typically not split according to Corollary~\ref{c:torsors-not-split}. The first, rather primitive way in which the two sides can be compared is now the following ``tautological comparison'', based on the observation that any extension becomes tautologically split after pullback to itself.
\begin{Corollary}\label{c:prim-paCS-comparison}
	There is a canonical and functorial isomorphism of rigid groups
	\[\Bun_{v,1}\times_{\mathcal A}\Bun_{v,1}\isomarrow \Higgs_1\times_{\mathcal A}\Bun_{v,1},\]
	given by sending $(L_1,L_2)\mapsto ((L_1\otimes L_2^{-1},\HTlog(L_1)),L_2)$. It restricts to an isomorphism of rigid groups
	\[ 	\MB \times_{\mathcal A}	\MB \isomarrow 	\MD \times_{\mathcal A}	\MD.\]
\end{Corollary}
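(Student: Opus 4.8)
The plan is to verify that the displayed formula is a well-defined morphism, to write down an explicit two-sided inverse, and to observe that both are automatically homomorphisms; the whole statement is really the torsor-theoretic tautology that a $\uP_{X,\et}$-torsor over $\mathcal A$ becomes canonically trivial after base change to its own total space. First I would recall from \cref{t:representability-of-v-Picard-functor} that $\Bun_{v,1}=\uP_{X,v}$ sits in the exact sequence $0\to\uP_{X,\et}\to\Bun_{v,1}\xrightarrow{\HTlog}\mathcal A\to 0$, that $\HTlog$ is a homomorphism of rigid groups, and that $\Higgs_1=\uP_{X,\et}\times\mathcal A$. On the fibre product $\Bun_{v,1}\times_{\mathcal A}\Bun_{v,1}$ one has $\HTlog(L_1)=\HTlog(L_2)$, hence $\HTlog(L_1\otimes L_2^{-1})=0$, so $L_1\otimes L_2^{-1}\in\ker(\HTlog)=\uP_{X,\et}$ and $(L_1\otimes L_2^{-1},\HTlog(L_1))$ is a point of $\Higgs_1$ whose $\mathcal A$-coordinate equals $\HTlog(L_2)$. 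Thus $(L_1,L_2)\mapsto((L_1\otimes L_2^{-1},\HTlog(L_1)),L_2)$ defines a morphism $\Bun_{v,1}\times_{\mathcal A}\Bun_{v,1}\to\Higgs_1\times_{\mathcal A}\Bun_{v,1}$, and since tensor product, inversion and $\HTlog$ are morphisms of rigid groups, it is a homomorphism.

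Next I would exhibit the inverse $((M,a),L)\mapsto(M\otimes L,\,L)$, for $M\in\uP_{X,\et}$ and $a=\HTlog(L)$. It lands in $\Bun_{v,1}\times_{\mathcal A}\Bun_{v,1}$ because $\HTlog(M\otimes L)=\HTlog(M)+\HTlog(L)=\HTlog(L)$, and the two composites are the identity by the one-line computations $(L_1\otimes L_2^{-1})\otimes L_2=L_1$ and $(M\otimes L)\otimes L^{-1}=M$, together with $\HTlog(M\otimes L)=a$. This yields the asserted isomorphism of rigid groups $\Bun_{v,1}\times_{\mathcal A}\Bun_{v,1}\isomarrow\Higgs_1\times_{\mathcal A}\Bun_{v,1}$, and its functoriality in $X$ is inherited from that of the sequence \cref{seq:Picv-ses}. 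I do not expect any real difficulty here: the only things needing care are that all objects in sight are representable by rigid groups — which is precisely \cref{t:representability-of-v-Picard-functor} and \cref{t:geometric-Simpson} — and that the two assignments are morphisms of adic spaces rather than just maps on points, which is clear as they are built from the group operations.

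Finally, for the restriction to the topological torsion loci I would feed in \cref{t:tt-torsion-is-1+m-torsors} and \cref{t:geometric-Simpson}. Since $\MB=\uP_{X,v}^{\tt}$ is an abelian subsheaf of $\Bun_{v,1}$, for $L_1,L_2\in\MB$ the element $L_1\otimes L_2^{-1}$ again lies in $\MB$, and being killed by $\HTlog$ it lies in $\ker(\HTlog\colon\MB\to\mathcal A)$, which is $\uP_{X,\et}^{\tt}$ by the exact sequence \cref{seq:Picv-tt-ses}; hence $(L_1\otimes L_2^{-1},\HTlog(L_1))$ is a point of $\MD=\uP_{X,\et}^{\tt}\times\mathcal A$, so the formula above restricts to the topological torsion parts, with inverse still given by $(M\otimes L,L)$ using that $\uP_{X,\et}^{\tt}$ and $\uP_{X,v}^{\tt}$ are subgroups. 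The one point genuinely worth isolating is that the kernel of $\HTlog$ on $\MB$ is $\uP_{X,\et}^{\tt}$ — not merely $\uP_{X,\et}\cap\uP_{X,v}^{\tt}$ — and this is exactly the exactness in \cref{t:geometric-Simpson} (ultimately in \cref{t:tt-torsion-is-1+m-torsors}.2); it is the only place where the earlier theorems, rather than a formal diagram chase, are really used, and it is what makes the comparison see the non-split torsor structure recorded by \cref{c:torsors-not-split}.
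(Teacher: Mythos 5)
Your proof is correct and is precisely the argument the paper leaves implicit: the corollary is the tautological splitting of a torsor after pullback to its own total space, written out via the group law, and you correctly isolate the one non-formal input for the restricted statement, namely that $\ker(\HTlog\colon\MB\to\mathcal A)=\uP_{X,\et}^{\tt}$ rather than just $\uP_{X,\et}\cap\uP_{X,v}^{\tt}\supseteq\uP_{X,\et}^{\tt}$, which is the left-exactness of \cref{seq:Picv-tt-ses}. One caveat you should make explicit: your restricted map lands in $\MD\times_{\mathcal A}\MB$, not $\MD\times_{\mathcal A}\MD$ as literally stated; this matches the shape of the first display (same second factor on both sides), and the second factor ``$\MD$'' in the statement appears to be a typo, since a canonical isomorphism over $\mathcal A$ onto $\MD\times_{\mathcal A}\MD$ would trivialise the $\uP_{X,\et}^{\tt}\times\uP_{X,\et}^{\tt}$-torsor $\MB\times_{\mathcal A}\MB\to\mathcal A$ and hence split $\MB\to\mathcal A$, contradicting \cref{c:torsors-not-split}.
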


Our second comparison isomorphism is geometrically more refined but no longer completely canonical: It relies on the observation that the exact sequence \cref{seq:Picv-tt-ses} in \cref{t:geometric-Simpson} admits a reduction of structure groups to the $p$-torsion subsheaf $\uP_{X,\et}[p^\infty]\subseteq \uP_{X,\et}^\tt$:
\begin{Definition}\label{d:L_XX}
	Let $\mathbb X$ be any $B_{\dR}^+/\xi^2$-lift of $X$, which by \cref{r:splitting-of-HT} induces a splitting $s_{\mathbb X}$ of the Hodge--Tate sequence \cref{r:HT-seq-splitting} . Then we define the \'etale sheaf $\mathbb L_{\mathbb X}\to \mathcal A$ as the pullback
	\[ \begin{tikzcd}
		&  & 0 \arrow[r] & {\uP_{X,\et}[p^\infty]} \arrow[d,equal] \arrow[r] & \mathbb L_{\mathbb X} \arrow[d] \arrow[r] & \mathcal A\arrow[d,"s_{\mathbb X}"] \arrow[r] & 0 \\
		&  & 0 \arrow[r] & {\uP_{X,\et}[p^\infty]}  \arrow[r]& {\HOM(\pi_1(X),\wh\G_m)} \arrow[r,"\log"] &{\HOM(\pi_1(X),\G_a)}  \arrow[r]& 0
	\end{tikzcd}\]
where $\wh\G_m:=\G_m\tpt$ is the open disc of radius $1$ around $1$, and the bottom row is obtained by applying $\HOM(\pi_1(X),-)$ to the logarithm sequence $0\to \mu_{p^\infty}\to \wh\G_m\xrightarrow{\log} \G_a\to 0$.
\end{Definition}
The sheaf $\mathbb L_{\mathbb X}$ induces the following ``\'etale comparison isomorphism'':
\begin{Theorem}\label{t:etale-comparison}
	There is a canonical isomorphism
	\[ \Bun_{v,1}\times_{\mathcal A}\mathbb  L_{\mathbb X}\isomarrow \Higgs_{1}\times_{\mathcal A}\mathbb  L_{\mathbb X}.\]
	It restrict to an isomorphism
	\[ \MB\times_{\mathcal A}\mathbb  L_{\mathbb X}\isomarrow \MD\times_{\mathcal A}\mathbb  L_{\mathbb X}.\]
\end{Theorem}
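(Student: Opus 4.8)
The plan is to trivialise the torsor $\Bun_{v,1}\to\mathcal A$ after pullback along $\mathbb L_{\mathbb X}\to\mathcal A$ by writing down an explicit section, and to read off the comparison with the split torsor $\Higgs_1\to\mathcal A$ from this. Recall from \cref{t:representability-of-v-Picard-functor} that $\HTlog\colon\Bun_{v,1}=\uP_{X,v}\to\mathcal A$ is an étale $\uP_{X,\et}$-torsor, while $\Higgs_1=\uP_{X,\et}\times\mathcal A\to\mathcal A$ is the trivial one; hence it suffices to produce a section $\sigma\colon\mathbb L_{\mathbb X}\to\Bun_{v,1}$ of $\HTlog$ over $\mathbb L_{\mathbb X}$, i.e.\ with $\HTlog\circ\sigma=\pi$ for $\pi\colon\mathbb L_{\mathbb X}\to\mathcal A$. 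Given such a $\sigma$, the map $(L,\ell)\mapsto(L\cdot\sigma(\ell)^{-1},\ell)$ is the desired isomorphism $\Bun_{v,1}\times_{\mathcal A}\mathbb L_{\mathbb X}\isomarrow\uP_{X,\et}\times\mathbb L_{\mathbb X}=\Higgs_1\times_{\mathcal A}\mathbb L_{\mathbb X}$, well defined because $L\cdot\sigma(\ell)^{-1}$ has vanishing $\HTlog$, hence lies in $\uP_{X,\et}=\ker(\HTlog)$.

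The section is built from the tautological structure of $\mathbb L_{\mathbb X}$. By \cref{d:L_XX}, $\mathbb L_{\mathbb X}=\mathcal A\times_{\HOM(\pi_1(X),\G_a)}\HOM(\pi_1(X),\wh\G_m)$, the fibre product over $s_{\mathbb X}$ and $\log$, so it carries a tautological morphism $\sigma_0\colon\mathbb L_{\mathbb X}\to\HOM(\pi_1(X),\wh\G_m)$ with $\log\circ\sigma_0=s_{\mathbb X}\circ\pi$. Postcomposing with the open immersion $\HOM(\pi_1(X),\wh\G_m)\hookrightarrow\HOM(\pi_1(X),\G_m^{\tt})=\MB\subseteq\uP_{X,v}$ induced by $\wh\G_m=\G_m\tpt\subseteq\G_m^{\tt}$ gives $\sigma$. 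That $\sigma$ is a section of $\HTlog$ is the one substantive point: by \cref{l:explicit-descr-of-HTlog-on-HOM} the restriction of $\HTlog$ to $\MB$ factors as $\HT\circ\log$, so
\[\HTlog\circ\sigma=\HT\circ\log\circ\sigma_0=\HT\circ s_{\mathbb X}\circ\pi=\pi,\]
using that $s_{\mathbb X}$ is a section of $\HT$. This is precisely where the choice of $\mathbb X$ enters: over $\mathcal A$ itself no section of $\HTlog$ exists by \cref{c:torsors-not-split}, but over $\mathbb L_{\mathbb X}$ the splitting $s_{\mathbb X}$ of the Hodge–Tate sequence lifts through $\log$ to a genuine family of characters of $\pi_1(X)$, which is exactly the data of $\sigma$.

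Finally, the restriction statement follows because $\sigma$ takes values in $\MB=\uP_{X,v}^{\tt}$: for $L\in\MB$ the element $L\cdot\sigma(\ell)^{-1}$ lies in $\uP_{X,v}^{\tt}\cap\uP_{X,\et}=\uP_{X,\et}^{\tt}$ (this intersection identity being immediate from \cref{t:tt-torsion-is-1+m-torsors} together with the injectivity of $\uP_{\wt X,\et}\to\uP_{\wt X,v}$), so the isomorphism above carries $\MB\times_{\mathcal A}\mathbb L_{\mathbb X}$ onto $\uP_{X,\et}^{\tt}\times\mathbb L_{\mathbb X}=\MD\times_{\mathcal A}\mathbb L_{\mathbb X}$. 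I expect the only genuinely delicate points to be bookkeeping rather than conceptual: checking that $\mathbb L_{\mathbb X}\to\mathcal A$ really is a $\uP_{X,\et}[p^\infty]$-torsor representable by a rigid space — which needs the identification $\HOM(\pi_1(X),\mu_{p^\infty})=\uP_{X,\et}[p^\infty]$ via Kummer theory (valid since $K$ is algebraically closed) and surjectivity of the $\mu_{p^\infty}$-isogeny $\log\colon\wh\G_m\to\G_a$ — and verifying that the isomorphism is functorial in $X$ and compatible with the torsor structures, both of which are immediate from the constructions.
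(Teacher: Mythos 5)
Your proof is correct and is essentially the paper's argument in different packaging: the paper phrases the key step as ``$\mathbb L_{\mathbb X}\to\mathcal A$ is a reduction of structure group of the torsor $\MB\to\mathcal A$'' and then pushes out the tautological self-splitting of $\mathbb L_{\mathbb X}$, while you unwind exactly this by exhibiting the underlying map $\mathbb L_{\mathbb X}\to\HOM(\pi_1(X),\wh\G_m)\hookrightarrow\MB$ as an explicit section $\sigma$ of $\HTlog$ over $\mathbb L_{\mathbb X}$ and trivialising via $(L,\ell)\mapsto(L\cdot\sigma(\ell)^{-1},\ell)$. The substantive verification is identical in both (commutativity from \cref{l:explicit-descr-of-HTlog-on-HOM} combined with $\HT\circ s_{\mathbb X}=\id$), and your identification $\uP_{X,v}^{\tt}\cap\uP_{X,\et}=\uP_{X,\et}^{\tt}$ for the restriction statement is also available directly from the exactness of \cref{seq:Picv-tt-ses}.
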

\begin{proof}
	The defining sequence of $\mathbb  L_{\mathbb X}$ is a reduction of structure group of the short exact sequence  from \cref{t:geometric-Simpson}: 
	This follows from combining the above diagram with
	\[ \begin{tikzcd}
	&  & 0 \arrow[r] & {\uP_{X,\et}[p^\infty]} \arrow[r]  \arrow[d]& {\HOM(\pi_1(X),\wh\G_m)} \arrow[r,"\log"] \arrow[d,hook]&{\HOM(\pi_1(X),\G_a)}  \arrow[r]\arrow[d,"\HT"] & 0 \\
	&  & 0 \arrow[r] &  {\uP_{X,\et}^\tt} \arrow[r] & {\HOM(\pi_1(X),\G_m)} \arrow[r,"\HTlog"] & \mathcal A \arrow[r] & 0
\end{tikzcd}\]
and the fact that $\HT\circ s_{\mathbb X}=\id$. Since  $\mathbb  L_{\mathbb X}\to \mathcal A$ is a $\uP_{X,\et}[p^\infty]$ torsor, there is a canonical isomorphism $\mathbb  L_{\mathbb X}\times_{\mathcal A}\mathbb  L_{\mathbb X}\isomarrow  {\uP_{X,\et}[p^\infty]} \times_{\mathcal A}\mathbb  L_{\mathbb X}$, like in \cref{c:prim-paCS-comparison}. Forming the pushout of the $\uP_{X,\et}[p^\infty]$-torsor in the first factor  along $\uP_{X,\et}[p^\infty]\to \uP_{X,\et}^\tt$ gives the second isomorphism. Pushing further along $\uP_{X,\et}^\tt\to \uP_{X,\et}$ gives the first by \cref{c:pushout-relation}
\end{proof}

Finally, there is an \'etale version which is completely canonical, and which can be described as a universal version of \cref{t:etale-comparison} over the moduli space of all Hodge--Tate splittings:
\begin{Definition}
	Let $\mathcal S$ be the rigid affine space associated to the finite $K$-vector space \[\Hom_K(H^0(X,\Omega^1_X(-1)),\Hom_{\cts}(\pi_1(X),K)).\]
	Set $\mathcal A_{\mathcal S}:=\mathcal A\times \mathcal S$.  Then there is a tautological morphism
	\[ s:\mathcal A_{\mathcal S}\to \HOM(\pi_1(X),\G_a).\]
	We use it to define a universal \'etale sheaf $ \mathbb L\to \mathcal A_{\mathcal S}$ via the commutative diagram
		\[ \begin{tikzcd}
		&  & 0 \arrow[r] & {\uP_{X,\et}[p^\infty]} \arrow[d,equal] \arrow[r] & \mathbb L \arrow[d] \arrow[r] & \mathcal A_{\mathcal S}\arrow[d,"s"] \arrow[r] & 0 \\
		&  & 0 \arrow[r] & {\uP_{X,\et}[p^\infty]}  \arrow[r]& {\HOM(\pi_1(X),\wh\G_m)} \arrow[r,"\log"] &{\HOM(\pi_1(X),\G_a)}  \arrow[r]& 0.
	\end{tikzcd}\]
	Then for any lift $\mathbb X$ of $X$ corresponding to a point $x\in \mathcal S(K)$, the fibre of $\mathbb L$ over $\mathcal A\times x$ is  $\mathbb L_{\mathbb X}$.
	
	We let $\mathcal P$ be the pushout of $\mathbb L$ along  $\uP_{X,\et}[p^\infty]\to \uP_{X,\et}^\tt$. This is an extension
	\[ 0\to \uP_{X,\et}^\tt\to \mathcal P\to \mathcal A_{\mathcal S}\to 0.\]
	Note that this is completely canonical and does not depend on either lifts or exponentials.
\end{Definition}
We now have the following ``universal comparsion isomorphism'': In a way reminiscent of the Deligne--Hitchin twistor space in complex geometry \cite[\S4]{Simpson_Twistor}, this exhibits $\MD$ as a ``degeneration'' of $\MB$ in the sense that there is an analytic morphism over a base $\mathcal S$ whose fibres are generically isomorphic to $\MB$, but whose fibre over a special point is equal to $\MD$.
(This is not to say that the universal comparison is directly related to $\lambda$-connections over $X$, whose closest known analogue are the $q$-connections of Morrow--Tsuji \cite[\S2]{MorrowTsuji}.)
\begin{Proposition}\label{p:universal-comp-morphism}
	\begin{enumerate}
		\item For any $B_{\dR}^+/\xi^2$-lift $\mathbb X$ of $X$, there is for the fibre $\mathcal P_x$  of  $\mathcal P$ over the associated point $x\in \mathcal S(K)$ a canonical isomorphism
		\[\mathcal P_x\isomarrow \MB.\]
		\item For any morphism 
		$x:H^1_{\et}(X,\Omega^1)\to H^1_{\et}(X,\Q_p)\otimes K=\Hom_{\cts}(\pi_1(X),K)$ factoring through the Hodge-Tate filtration, we instead have a canonical isomorphism 
		\[\mathcal P_x\isomarrow \MD.\] 
		\item
		There is a canonical and functorial isomorphism of rigid spaces
		\[c: \mathcal P\times_{\mathcal A_{\mathcal S}}\mathbb L\isomarrow \MD\times_{\mathcal A}\mathbb L\]
		such that for any $B_{\dR}^+/\xi^2$-lift $\mathbb X$ of $X$, the fibre of  $c$ over the associated point $x\in \mathcal S(K)$ is canonically identified via (1) with  the isomorphism from \cref{t:etale-comparison}.
	\end{enumerate}
The analogous statements hold for $\Bun_1$, $\Higgs_{1}$ instead of $\MB$, $\MD$, respectively.
\end{Proposition}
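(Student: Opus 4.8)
The plan is to reduce parts (1)--(3) to a single identification of the extension $\mathcal P$ as an honest fibre product of rigid groups, after which each assertion becomes a formal manipulation with pullbacks and pushouts of torsors. I will use two standard facts without further comment: first, for extensions of abelian sheaves (hence of rigid groups, viewed as $v$-sheaves), pushout along a morphism of coefficient groups commutes with pullback along a morphism of the base; second, if $T\to S$ is an $H$-torsor and $T'\to S$ its pushout along $H\to H'$, then the pulled-back $H'$-torsor $T'\times_S T$ is canonically trivial over $T$, split by $t\mapsto(\overline t,t)$. Throughout, $s\colon\mathcal A_{\mathcal S}\to\HOM(\pi_1(X),\G_a)$ denotes the tautological morphism and $\HT\colon\HOM(\pi_1(X),\G_a)\to\mathcal A$ the morphism of rigid groups underlying the Hodge--Tate map, so that $\HT\circ s\colon\mathcal A_{\mathcal S}\to\mathcal A$ is a morphism of rigid affine spaces.

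\textbf{The key identification.} Consider the map of short exact sequences from the proof of \cref{t:etale-comparison}: the logarithm extension $0\to\uP_{X,\et}[p^\infty]\to\HOM(\pi_1(X),\wh\G_m)\xrightarrow{\log}\HOM(\pi_1(X),\G_a)\to 0$ maps to the sequence of \cref{t:geometric-Simpson} via the identity on $\uP_{X,\et}^{\tt}$ at the left-hand end (after the inclusion $\uP_{X,\et}[p^\infty]\hookrightarrow\uP_{X,\et}^{\tt}$), the inclusion $\HOM(\pi_1(X),\wh\G_m)\hookrightarrow\HOM(\pi_1(X),\G_m)$ in the middle, and $\HT$ on the base. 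I would first observe that this exhibits the pushout of the logarithm extension along $\uP_{X,\et}[p^\infty]\to\uP_{X,\et}^{\tt}$ as the pullback $\HT^{*}\HOM(\pi_1(X),\G_m)$, an extension of $\HOM(\pi_1(X),\G_a)$ by $\uP_{X,\et}^{\tt}$ --- because a map of extensions over a fixed base inducing the identity on both outer terms is an isomorphism. Since $\mathbb L=s^{*}\HOM(\pi_1(X),\wh\G_m)$ by \cref{d:L_XX}, commuting pushout past pullback yields the identity
\[\mathcal P\;\cong\;(\HT\circ s)^{*}\HOM(\pi_1(X),\G_m)\;=\;\mathcal A_{\mathcal S}\times_{\mathcal A}\HOM(\pi_1(X),\G_m)\]
as extensions of $\mathcal A_{\mathcal S}$ by $\uP_{X,\et}^{\tt}$. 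In particular $\mathcal P$, being a fibre product of rigid groups, is represented by a smooth rigid space, and so is $\mathbb L$; together with \cref{l:cts-char-variety}, \cref{t:representability-of-v-Picard-functor} and \cref{t:geometric-Simpson} this gives the representability assertions.

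\textbf{Deducing (1)--(3) and the $\Bun_1$, $\Higgs_1$ variant.} The fibre of $\mathcal P$ over $\mathcal A\times\{x\}$ is then $(\HT\circ x)^{*}\HOM(\pi_1(X),\G_m)$, where $x\colon\mathcal A\to\HOM(\pi_1(X),\G_a)$ is the morphism corresponding to the point. If $\mathbb X$ is a $B_{\dR}^{+}/\xi^{2}$-lift, then $x=s_{\mathbb X}$ is a splitting of the Hodge--Tate sequence by \cref{r:splitting-of-HT}, so $\HT\circ x=\id$ and $\mathcal P_x\cong\HOM(\pi_1(X),\G_m)=\MB$, giving (1); if $x$ factors through the Hodge--Tate filtration, i.e.\ $\HT\circ x=0$, the pullback is the trivial extension $\uP_{X,\et}^{\tt}\times\mathcal A=\MD$, giving (2). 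For (3): $\mathbb L\to\mathcal A_{\mathcal S}$ is a $\uP_{X,\et}[p^\infty]$-torsor and $\mathcal P$ is its pushout along $\uP_{X,\et}[p^\infty]\to\uP_{X,\et}^{\tt}$, so the trivialisation fact gives a canonical isomorphism $\mathcal P\times_{\mathcal A_{\mathcal S}}\mathbb L\isomarrow\uP_{X,\et}^{\tt}\times\mathbb L$; since $\MD=\uP_{X,\et}^{\tt}\times\mathcal A$ we also have $\MD\times_{\mathcal A}\mathbb L=\uP_{X,\et}^{\tt}\times\mathbb L$ canonically, and composing defines $c$. Functoriality in $X$ is inherited from that of all the constituent constructions. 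Restricting $c$ over a point $x$ attached to a lift $\mathbb X$: the fibre of $\mathbb L$ over $\mathcal A\times\{x\}$ is $\mathbb L_{\mathbb X}$ (again using $\HT\circ s_{\mathbb X}=\id$), and the trivialisation fact applied to $\mathbb L$ restricts to the one applied to $\mathbb L_{\mathbb X}$ used in \cref{t:etale-comparison}, which identifies the fibre of $c$ with the isomorphism there. For the $\Bun_1$, $\Higgs_1$ version I would run the identical argument after pushing everything one further step along $\uP_{X,\et}^{\tt}\hookrightarrow\uP_{X,\et}$: by \cref{c:pushout-relation} the pushout of $\HOM(\pi_1(X),\G_m)$ is $\uP_{X,v}$ with its map $\HTlog$, so the pushout $\mathcal P^{\mathrm{big}}$ of $\mathcal P$ equals $\mathcal A_{\mathcal S}\times_{\mathcal A}\uP_{X,v}$, with fibres $\Bun_{v,1}=\uP_{X,v}$ over a lift and $\Higgs_1=\uP_{X,\et}\times\mathcal A$ over a point of the Hodge--Tate filtration, and the trivialisation fact gives $\mathcal P^{\mathrm{big}}\times_{\mathcal A_{\mathcal S}}\mathbb L\isomarrow\Higgs_1\times_{\mathcal A}\mathbb L$ compatibly with \cref{t:etale-comparison}.

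\textbf{Main obstacle.} No single estimate is hard here; the delicate point is the bookkeeping behind the word ``canonical'' in (3). Making it precise means fixing once and for all the identifications $\mathcal P\cong\mathcal A_{\mathcal S}\times_{\mathcal A}\HOM(\pi_1(X),\G_m)$ and $\mathbb L\cong s^{*}\HOM(\pi_1(X),\wh\G_m)$ and tracking them through restriction to a fibre, so that the resulting map literally recovers the isomorphism of \cref{t:etale-comparison} and not one differing by an automorphism of $\uP_{X,\et}^{\tt}$ or of $\mathbb L_{\mathbb X}$ --- routine, but it must be spelled out. A secondary point worth stating explicitly is the reading of the hypothesis in (2): ``$x$ factors through the Hodge--Tate filtration'' is to be interpreted as $x$ having image in $H^1_{\an}(X,\O)=\ker\HT$, i.e.\ $\HT\circ x=0$, which is exactly the condition making $(\HT\circ x)^{*}\HOM(\pi_1(X),\G_m)$ the trivial extension $\MD$.
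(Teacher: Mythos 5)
Your proof is correct and follows essentially the same route as the paper: the same commutative diagram of extensions (pushout of the logarithm sequence versus pullback of the sequence from \cref{t:geometric-Simpson} along $\HT\circ s$), the same specialisations $\HT\circ s_{\mathbb X}=\id$ and $\HT\circ x=0$ for parts (1) and (2), and the same tautological splitting of the torsor $\mathbb L$ over itself for part (3). Your only repackaging is to state the global identification $\mathcal P\cong\mathcal A_{\mathcal S}\times_{\mathcal A}\HOM(\pi_1(X),\G_m)$ up front rather than invoking the universal property of the pushout fibrewise, which is a harmless (arguably cleaner) reorganisation of the same argument.
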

\begin{proof}
	We have a commutative diagram
	\[ \begin{tikzcd}
		&  & 0 \arrow[r] & {\uP_{X,\et}[p^\infty]} \arrow[d] \arrow[r] & \mathbb L \arrow[d] \arrow[r] & \mathcal A_{\mathcal S}\arrow[d,"\HT\circ s"] \arrow[r] & 0 \\
		&  & 0 \arrow[r] & {\uP^\tt_{X,\et}}  \arrow[r]& {\HOM(\pi_1(X),\G_m)} \arrow[r] &{\mathcal A}  \arrow[r]& 0
	\end{tikzcd}\]
	inducing a natural map $\mathcal P\to \HOM(\pi_1(X),\G_m)$ by the universal property of the pushout. 
	
	Part 1 follows from comparing the definition of $\mathbb L$ to that of \cref{d:L_XX}, since in this case the morphism on the right becomes the identity.
	
	To see part 2, observe that in this case $\HT\circ s$ specialises to $0$. Thus there is a canonical morphism $\mathbb L_x\to  \uP^\tt_{X,\et}$ which induces a canonical splitting of $\mathcal P$. This the same datum as an isomorphism $\mathcal P_x\isomarrow \uP^\tt_{X,\et}\times \mathcal A=\MD$.
	
	Finally, part 3 follows from the tautological splitting
	\[\mathbb L\times_{\mathcal A_{\mathcal S}}\mathbb L\isomarrow \uP_{X,\et}[p^\infty]\times_{\mathcal A_S}\mathbb L\]
	after pushout along $\uP_{X,\et}[p^\infty]\to \uP^\tt_{X,\et}$. The last part of 3 about compatibility with the isomorphism from \cref{t:etale-comparison} over $x\in \mathcal S(K)$ is then clear from  comparing the proofs.
\end{proof}

It remains to note that this explains the choices in \cref{t:intro-Corollary-Cp} in a geometric fashion: 
\begin{proof}[Proof of \cref{t:intro-Corollary-Cp}]
It is clear from the definition that any choice of exponential induces a continuous splitting $s$ of $\mathbb L_{\mathbb X}\to \mathcal A$ on $K$-points, which induces the desired homeomorphism
\[ \Hom_{\cts}(\pi_1(X),K^\times)=\MB(K)\isomarrow\MD(K)=\Pic^\tt_X(K)\times H^0(X,\Omega^1)(-1)\]
by taking $K$-points in \cref{t:etale-comparison} and considering the fibre over $s:\mathcal A(K)\to \mathbb L_{\mathbb X}(K)$.
\end{proof}

\begin{Remark}\label{r:etale-corresp}
	As an aside, \cref{t:etale-comparison} means that the term ``geometric $p$-adic Simpson \textit{correspondence}'' in the title may be interpreted quite literally: It provides a diagram
	\[
	\begin{tikzcd}
		{\MB} 	& {\MB\times_{\mathcal A}\mathbb  L_{\mathbb X}}\cong {\MD\times_{\mathcal A}\mathbb  L_{\mathbb X}} \arrow[r]\arrow[l] &   {\MD}
	\end{tikzcd}\]
	which is indeed an ``\'etale correspondence'' in the sense of algebraic geometry.
\end{Remark}

The results of this section give an indication for what kind of geometric results one can expect also in higher rank. More precisely, we believe that the statements of \cref{t:etale-comparison}, \cref{p:universal-comp-morphism} and \cref{r:etale-corresp} stand a chance to generalise: This suggests that the moduli space of $v$-vector bundles is an \'etale twist of the moduli space of Higgs bundles over the Hitchin base, and that the usual choices necessary for the formulation of the $p$-adic Simpson correspondence can be interpreted as leading to a trivialisation of this twist.

\medskip

We will show in \cite{heuer-sheafified-paCS} that indeed, one can construct analytic moduli spaces of $v$-vector bundles and Higgs bundles in terms of small $v$-stacks, and that both admit natural maps to the Hitchin base. This provides further evidence that the results of this article provide the first instance of a general moduli-theoretic approach to the $p$-adic Simpson correspondence.

\bibliographystyle{alphabbrv}
\bibliography{Pic_v-geometric-Pic.bbl}
\end{document}